\newtheorem{proposition}{Proposition}[section] 
\newtheorem{defn}[proposition]{Definition}
\newtheorem{assumption}[proposition]{Assumption}
\newtheorem{thm}[proposition]{Theorem}
\newtheorem{lemma}[proposition]{Lemma}
\newtheorem{corollary}[proposition]{Corollary}
\newtheorem{remark}[proposition]{Remark}
\newcommand{\N}{\ensuremath{{\mathbb N}}}
\newcommand{\R}{\ensuremath{{\mathbb R}}}
\newcommand{\hta}{\ensuremath{{\hat \tau}}}
\newcommand{\E}{\ensuremath{{\mathbb E}}}
\newcommand{\Pro}{\ensuremath{{\,\mathbb P}}}
\def\SpecialChap#1{{\let\cleardoublepage\relax\chapter{#1}}}
\begin{document}
	\title{A Solution Technique for L\'evy Driven Long Term Average Impulse Control Problems}
\author{S\"oren Christensen\thanks{Christian-Albrechts-Universit\"at zu Kiel, Department of Mathematics, Ludewig-Meyn-Str. 4,
24098 Kiel, Germany, \emph{lastname}@math.uni-kiel.de} \and Tobias Sohr\footnotemark[1]}
\maketitle

\begin{abstract}
 This article treats long term average impulse control problems with running costs in the case that the underlying process is a L\'evy process. 
Assuming a maximum representation for the payoff function, we give easy to verify conditions for the control problem to have an $\left(s,S\right)$ strategy as an optimizer. The occurring thresholds are given by the roots of an explicit auxiliary function. This leads to a step by step solution technique whose utility we demonstrate by solving a variety of examples of impulse control problems. 
\end{abstract}

\section{Introduction}
Stochastic control techniques have proven themselves a useful tool whenever consecutive optimal decisions are to be made under uncertainty. Naturally this leads to a broad range of applications from finance and economics to the management of natural resources. The most common type of stochastic control problems, continuous control problems, however, suffer from one major drawback. Often the optimal strategy requires an infinite number of actions in a finite time interval and is therefore not realizable in practice. 
Instead modelling the underlying problem as an impulse control problem circumvents this issue by just allowing strategies that consist of countably many actions. Therefore, impulse control models are the natural choice when the underlying problems entail some fixed costs for each action or one aims for realizable optimizer. \\
Impulse control problems were intensively studied over decades. The foundation of the theoretical framework was laid in \cite{BLi} by connecting impulse control problems to QVIs. Due to the overwhelmingly broad field of applications and literature we restrain ourselves to present just a very selective digest of applications: Finance (\cite{Belak2019}, \cite{Korn1999}), control of the exchange rate (\cite{MundacaOeksendal1998}), optimal harvesting (\cite{A04}) and inventory control (\cite{helmes2017continuous}) are some application closely connected to this work. Furthermore, \cite{OeksendalSulem2005} provides a broad range of applications. While especially in financial settings most of the times a discounted payoff functional is used, in forest management and also in inventory control a long term average payoff or cost functional also is of interest, when one aims for a sustainable and long-ranging nature of the problem.
In discounted problems the value function can be characterized as the value of an implicit stopping problem and under quite general assumptions this yields a characterization of the value function as the smallest function in a certain set of superharmonic majorants of the payoff function (see \cite{C13impulse}). In contrast to that the value of long term average problems is often constant and therefore bears significantly less structure to work with. An additional mathematical difficulty is the lack of existence of a 0-resolvent, since in discounted problems resolvents or discounted potentials play a significant role in many solution techniques. Nevertheless, a connection to a stopping problem can be made, see \cite{stettner1986continuous} and \cite{stettnerpalczewski17}, but apart from results with strong ergodicity assumptions posed upon the process, or results for diffusion processes (\cite{jack2006impulse}, \cite{helmes2017weak}), there are very few works, especially when it comes to (semi-)explicit characterizations of value and optimal strategies. In the discounted setting, one such example in the branch of inventory control is \cite{yamazaki}, where the underlying process is assumed to be a spectrally one sided L\'evy process and the payoff function is assumed to be linear. \\
 Here in our work we study the long term average problem with generalized linear costs for an underlying L\'evy process. Under fairly general conditions we first give a verification theorem that utilizes martingale techniques and can be viewed as the long term average equivalent to comparable results for the discounted case in \cite{C13impulse}. Furthermore, we characterize the value of the impulse control problem as the value of a connected stopping problem and even construct an optimal strategy from the optimizer of said stopping strategy. We assume existence of a maximum representation of the payoff function, an idea that is also used in the discounted setting, see e.g. \cite{follmerknispel} for a detailed discussion of such representation and the connection to potential theory, and \cite{christensen2017impulse} for an application to impulse control. Using this representation, we obtain easy to verify sufficient conditions for a so called $(s,S)$ strategy (shifting the process down to $s$ whenever it exceeds $S$) to be optimal and develop a solution technique to semiexplicitly obtain these boundaries and the value in many cases of interest. With this solution technique we show existence of optimal $(s,S)$ strategies for the long term average version of many examples of interest of impulse control problems that are presented in \cite{OeksendalSulem2005}. As an application we take a closer look to inventory control problems for spectrally positive L\'evy processes. In this case we semiexplicitly characterize the value and the optimal boundaries by a maximization problem that only entails the L\'evy exponent of the underlying process and the root of its right inverse, similar to the characterization \cite{helmes2017continuous} for diffusions and the one \cite{yamazaki} found for spectrally one sided L\'evy processes with discounted payoff functional. However, our assumptions are less restrictive and our characterization seems to be a bit easier to apply. 
 
 \subsection{Structure of the article}
Before presenting the detailed notations and assumptions, in the second section we condense our findings to an explicit step by step solution technique and illustrate the applicability of our technique by showing existence of optimal $\left(s,S\right)$ strategies in quite general setting that occur e.g. in forest management and inventory control. 
 In Section \ref{sec:setup} we introduce the necessary notations for the following proofs, define and motivate the problem and collect a bunch of necessary general results as well as some first insights on the structure of the problem that will be needed later. 
 In Section \ref{sec:connection} we first prove a verification theorem and discuss the connection to a related auxiliary optimal stopping problem. 
%
 Section \ref{sec:OSP} is devoted to said stopping problem. Assuming existence of a maximum representation of the value function, we give sufficient conditions for the stopping problem to have a threshold time as an optimizer and we characterize this optimizer via the maximum representation. 
 In Section \ref{sec:restarting} we show that under the assumptions of section five an $\left(s,S\right)$ strategy is optimal for the control problem where $S$ is the threshold of the stopping problem's optimizer. In the case the ladder height process of the underlying process is a special subordinator (and not a compound Poisson process) we also characterize the value $s$ in terms of the maximum representation. 
 A way to obtain the aforementioned maximum representation by use of the ladder processes is developed in Section \ref{sec:discussion_ass}.
 Section \ref{sec:proof_sol_tech} connects the results of the first sections to a proof of the validity of section two's solution technique. 
 In Section \ref{sec:appl} we apply our solution technique onto inventory control and optimal harvesting.
  Here we are able to (semi-)explicitly calculate the optimal strategy and the value of problems in common examples from these fields.

 \section{Main Results}\label{sec:main}
 The core of our results is a step by step solution technique for long term average impulse control problems of the form \begin{align*}v=\sup_{S=(\tau_n,\xi_n)_{n\in \N}}\liminf_{T\rightarrow\infty} \frac{1}{T}\E_x\left(\sum_{n:\tau_n\leq T}\left(\gamma\left(X^S_{\tau_n,-}\right)-\gamma\left(\xi_n\right)-K\right)-\int\limits_0^{T}h\left(X^S_s\right)ds\right)
 \end{align*} where $X$ is a L\'evy process with $\E\left(X_1\right)>0$, $S=(\tau_n,\xi_n)_{n\in \N}$ are admissible control strategies consisting of a sequence of stopping times $(\tau_n)_{n\in \N}$ indicating when the process is shifted and $\mathcal F_{\tau_n}$ measurable random variables $\xi_n$ indicating whereto the process is shifted at time $\tau_n$, here arbitrary downshifts are allowed. $X^S$ denotes the controlled process. All the objects are thoroughly defined in Section \ref{notationen} -- here we want to state our main findings as briefly and quickly as possible. The pay-off function $\gamma$ is assumed to be non-decreasing and the running cost function $h$ is assumed to be non-negative and to fulfil some integrability conditions, the detailed assumptions can be found in Section \ref{bedingungstetig}.  
 The approach not only provides a sufficient criterion to verify existence of an optimal $\left(s,S\right)$ strategy, in many cases it leads to a semiexplicit or explicit characterization of the boundaries. Our step by step solution technique reads as follows: 
 \begin{enumerate}
 	\item Find a function $f$ such that for all $x,\bar y\in \R$ with $x<\bar y$ $$
 	\gamma\left(x\right)=\E_{x}\left[\int_{0}^{\tau_{\bar y}}f\left(\sup_{ r\leq t} X_r\right)dt\right]+\E_{x}\left[\gamma\left(X_{\tau_{{\bar y}}}\right)-\int_{{ 0}}^{\tau_{\bar y}} h\left(X_s\right) ds\right], $$ where $\tau_y:=\inf\{t\geq 0|X_t\geq y\}$ for all $y \in \R$. 
 	 One way to obtain such a function, provided the occurring objects exist, is the choice \begin{align*}
 	f=- \left(A_H \gamma + \hat h \right) 
 	\end{align*}
 	as discussed in Definition \ref{fdef} and thereafter. Here $A_H$ is the extended generator of the ascending ladder height process $H$ of $X$ (normed appropriately, see Definition \ref{ladderheightdefinition}) and for all $y \in \R$ $$
 	\hat h\left(y\right)=\E_y\left(\int\limits_0^\infty h\left( H^{\downarrow}_t\right) \ dt \right) =\int_{{ 0}}^\infty h\left(y+x\right) d U^\downarrow\left(dx\right)
 	$$ where $H^{\downarrow}$ is the descending ladder height process of $X$ and $U^\downarrow$ the occupation measure of $H^\downarrow$. 
 	\item Find $\rho^* \in \R$ such that $f\left(x\right)=\rho^* $ has exactly two solutions $\underline x_{\rho^*} < \overline x_{\rho^*}$ and \begin{align*}
 	0&=\sup_{x\in [\underline x_{\rho^*}, \overline x_{\rho^*}]}\E_x\left(\gamma\left(X_{\tau_{\overline x_{\rho^*}}}\right)-\int_{{ 0}}^{\tau_{\overline x_{\rho^*}}}\left(h\left(X_s\right)+ \rho^* \right)\ ds\right)
 	\\&= \sup_{x\in [\underline x_{\rho^*}, \overline x_{\rho^*}]}\E_x\left(\int_0^{\hat \tau_{\overline x_\rho}}f\left(\sup_{ r\leq t }X_r\right)- \rho^*\ dt \right).
 	\end{align*}
 	If such $\rho^*$ exists, we have $$v=\rho^*,$$ and the $\left(s,S\right)$ strategy with $$S=\overline x_{\rho^*}$$ and \begin{align*}
 	s:&=\arg\max_{x\in [\underline x_{\rho^*}, \overline x_{\rho^*}]}\E_x\left(\gamma\left(X_{\tau_{\overline x_{\rho^*}}}\right)-\int_{{ 0}}^{\tau_{\overline x_{\rho^*}}}\left(h\left(X_s\right)+ \rho^*\right)  \ ds\right)
 	\end{align*} is optimal. 
 	\item If $X$ is not a compound Poisson process and $H$ is a special subordinator (as defined in Definition \ref{specialsub}), we furthermore have $$s=\underline x_{\rho^*}.$$ 
 \end{enumerate}
 There are two things we want to remark: First, our results exceed this condensed solution technique and for example deliver $\epsilon$-optimal $\left(s,S\right)$ strategies, even if no optimizers exist. \\ 
 Second, the ascending and descending ladder height processes are in general difficult to handle. However, in many special cases there are numerous helpful results known about these processes that often enable us to handle the objects occurring in our solution technique quite well. Kyprianou's book (\cite{kypri}) is an excellent source for theoretical results about these processes. Also for a dense class of L\'evy processes, namely the ones whose positive jumps are of phase type distribution, in \cite{pistorius1} an iterative method to explicitly calculate the law of the ascending ladder height process was developed. Furthermore, for many examples of interest, not the full distribution of the ladder processes needs to be known, but only certain moments. 
 To illustrate the utility of our solution technique, in the following we look at some interesting examples and special cases, that cover the long term average equivalents to almost all examples of impulse control problems in \cite{OeksendalSulem2005}. We don't pose any restrictions on the process $X$ apart from sufficient integrability to make the problem non-degenerate.
 \subsection{Linear $\gamma$ and convex $h$}
 The first special case we want to focus on is the case when for all $x \in \R$ 
 
 \begin{align*}
 \gamma\left(x\right)=Cx 
 \end{align*}
 and $h$ is positive, convex, continuous and $\lim_{x\rightarrow \infty}h\left(x\right)=\lim_{x\rightarrow -\infty}h\left(x\right)=\infty. $
 This type of payoff and cost functions occur in inventory control, see the discussion in Section  \ref{subsec:inventory}.
%
 Also the long term average equivalent to the dividend problem presented in \cite{OeksendalSulem2005}, Example 6.4, and the prominent exchange rate control problem (Example 6.5 in \cite{OeksendalSulem2005}) is covered by this special case. \\
 With our solution technique  one can show existence of an optimal $\left(s,S\right)$ strategy quite easily.
 \begin{enumerate}
 	\item To obtain $f$ we first observe that $H$ is a subordinator and therefore $$A_H \gamma=C\delta+C\int_{{ 0}}^\infty y\Pi_H\left(dy\right)$$ for some constant $\delta$ and the jump measure $\Pi_H$ of $H$ hence $A_H \gamma(\cdot)$ is constant. Further, for all $y \in \R$ $$\hat h\left(y\right)=\int_{{ 0}}^\infty h\left(y+x\right) d U^\downarrow\left(dx\right),$$
 	hence $\hat h$ is still convex with $\lim_{x\rightarrow \infty}\hat h\left(x\right)=\lim_{x\rightarrow -\infty}\hat h\left(x\right)=\infty $ and hence the equation $f(x)=\rho$ always has exactly two solutions, if we choose $\rho$ large enough. 
 	\item The function given by \begin{align*}
 	\rho \mapsto \sup_{x\in [\underline x_{\rho}, \overline x_{\rho}]}\E_x\left(\int_0^{\hat \tau_{\overline x_{\rho}}}f\left(\sup_{ r\leq t }X_r\right)-\rho\ dt \right)
 	\end{align*} is monotone and continuous, hence either the intermediate value theorem provides the desired root $\rho^*$, or we are in a degenerate case and the value is either $\infty$ or $-\infty$. 
 \end{enumerate}
 Hence our solution technique verifies the existence of an $\left(s,S\right)$ strategy. \\
 Furthermore, concerning the explicit obtainability we want to remark that the extended generator of the ladder height process in general is difficult to obtain. But to obtain the function \begin{align*}
 f\left(x\right)&=-\left(A_H\gamma \left(x\right)+\hat h \left(x\right)\right)
 \\&=\left\{C\delta+C\int_{{ 0}}^\infty y\Pi_H\left(dy\right)+\E_x\left(\int\limits_0^\infty h\left( H^{\downarrow}_t\right) \ dt \right)\right\}
 \end{align*}
 one does not need full knowledge of $A_H$. Only the drift term $\delta$ and the expected jump size of the ladder height process are needed. These parameters in principal can be expressed in parameters of $X$ which in some cases leads to good characterizations, as we will see later. But more importantly they seem to be accessible by path-wise simulation techniques, e.g. Monte Carlo methods, since by simulating paths of the initial process $X$, one can often directly derive the ladder height processes' path and therefore also its jumps and drift parts. The same arguments hold for finding $\hat h$. In most treated examples, $h$ are relatively simple functions, like (piecewise) linear ones, the square function, or even just a constant function. Hence in that cases $\hat h$ is just an integral moment of the descending ladder height process. 
 \subsection{Polynomial $\gamma$ and $h$} 
To show that basically the same arguments for finding $f$ hold in more general cases we now turn our attention to the case where $\gamma$ and $h$ are polynomials. This example is certainly one of great interest, since on one hand polynomials are interesting special cases of payoff and running cost functions on its own. On the other hand polynomials may serve as a tool for approximating more general functions. In the following, we will see that the necessary transformations we have to apply on $\gamma$ and $h$ have the very compelling property to transform polynomials to polynomials of the same degree. This makes our solution technique boil down to an analysis of a polynomial of known degree whose coefficients can be expressed in terms of $\gamma$, $h$, and parameters of the process. \\ 
This setting includes the long term average analogon to Example 7.8 in \cite{OeksendalSulem2005}. 
\\
 So now we assume $
 \gamma\left(x\right)= \sum_{i=0}^l a_i x^i
 $
 and 
 $
 h\left(x\right)= \sum_{i=0}^k c_i x^i
 $ and for the sake of simplicity and brevity simply assume the occurring moments and integrals to exist. 
 Then \begin{align*}
 A_H \gamma \left(x\right)=\sum_{i=0}^l b_ix^i
 \end{align*}
 with \begin{align*}
 b_i=(i+1) a_{i+1}+\sum_{j=i}^{l} a_j\binom{j}{i}\int_{{ 0}}^\infty y^{j-i}d\Pi_H\left(y\right) 
 \end{align*}
 for all $i \in \{0, ...,n\}$ where $a_{l+1}=0$ and $\Pi_H$ is the L\'evy measure of $H$. Furthermore, using Fubini's theorem \begin{align*}
 \hat h\left(x\right)
= \sum_{j=0}^k d_j x^j 
 \end{align*}
 where \begin{align*}
 d_j:&=\sum_{i=1}^{n}c_i \binom{i}{j}\int_{{ 0}}^\infty\E\left(\left(H_t^\downarrow\right)^{i-j} \right) \ dt
 \end{align*}
 Again we remark that useful formulas to obtain $\Pi_H$ can be found in \cite{kypri}. Further, $H^\downarrow$ acts in law like an exponentially killed subordinator, hence the occurring moments can be obtained via the cumulant function of said subordinator. Also we don't need to know the full distribution of ascending and descending ladder height processes. To explicitly get the occurring coefficients one only has to calculate drift rate, moments of the jump measure of the ascending ladder height process, and cumulative moments of the descending ladder height process, both up to a previously known given degree. So again this is a good starting point for simulations.\\
 The observation that $f=-\left(A_H\gamma+\hat h\right)$ is also a polynomial with degree $\max\{n,k\}$ on top of that yields a starting point for a different procedure to (computationally) find $f$. Our calculations above show that $f$ is a polynomial of known degree. Hence it is possible to just start with the desired equation
 $$
 \gamma\left(x\right)=\E_{x}\left[\int_{0}^{\tau_{\bar y}}f\left(\sup_{ r\leq t} X_r\right)dt\right]+\E_{x}\left[\gamma\left(X_{\tau_{{\bar y}}}\right)-\int_{{ 0}}^{\tau_{\bar y}} h\left(X_s\right) ds\right], $$ plug in a general polynomial of the right degree for $f$, compute the occurring moments and integrals either explicitly or approximate them numerically and then compare the coefficients. For example the aforementioned work \cite{pistorius1} also gives an iterative method to explicitly obtain the Laplace transform of the law of the running supremum of $X$, in case the upward jumps of $X$ are of phase type. 
 
 \subsection{Exponential L\'evy processes}
 A class of processes which is of great interest in mathematical finance is the class of exponential L\'evy processes, for example Exercises 6.2 and 7.2 in \cite{OeksendalSulem2005} present examples where this setting is used to determine the optimal stream of dividends.\\
 Set for all $x \in \R$ $$\gamma\left(x\right):= e^x $$ and for all $x \in \R$ 
 $$h\left(x\right):= a_1e^{a_2 x}+b_1e^{-b_2 x}.$$
 Again apart from $\E\left(X_1\right)>0$, we only assume on $X$ that the occurring moments and integrals to exist. \\
 To obtain $f$ similar to the polynomial case we see that \begin{align*}
 A_H \gamma\left(x\right)&=\delta e^x+e^x\int_{{ 0}}^\infty e^y\Pi_H\left(dy\right)\\
 &=\delta e^x+e^x\int_{{ 0}}^\infty e^y\int_{{ 0}}^\infty \Pi\left(z+y,\infty\right) U^\downarrow\left(dz\right)\  dy.\\
 \end{align*} Further, \begin{align*}
 \hat h\left(x\right)&=\int_{{ 0}}^\infty a_1e^{a_2 \left(x+y\right)}-b_1e^{b_2 \left(x+y\right)} d U^\downarrow\left(dy\right) \\&=a_1e^{a_2x}\int_{{ 0}}^\infty e^{a_2 y} d U^\downarrow\left(dy\right)+ b_1e^{-b_2x}\int_{{ 0}}^\infty e^{-b_2 y} d U^\downarrow\left(dy\right)
 \end{align*}for all $x\in \R$. Hence in this case finding the optimal value and an optimal strategy boils down to the analysis of exponential functions.

\section{Setup, toolbox and general results}\label{sec:setup}
 Having presented a bunch of examples that show various use cases of our solution technique, the rest of this work will be devoted to prove that the solution technique indeed works as described. First, we formally state the problem, give detailed definitions and introduce the necessary notations. Then we connect the impulse control problem to an associated stopping problem and describe, when the stopping problem has an optimal threshold time by use of the maximum representation. After that we discuss when certain roots of $f$, the function that occurs in said maximum representation, yield boundaries for an optimal $\left(s,S\right)$ strategy. Lastly, we describe one possible way to obtain said function $f$. 
\subsection{Notation and prerequisites}\label{notationen}

Let $X$ be a L\'evy process on $\R$ with a right continuous filtration $\mathcal F:=\left(\mathcal F_t\right)_{t\geq 0}$ such that $\mathcal F_0$ is complete. Denote the underlying probability space with $\left(\Omega, \mathcal P, \Sigma\right)$ and for all $x \in \R$ define $\Pro_x\left(\cdot\right):=\mathcal P\left(\cdot|X_0=x\right)$. For each $x \in \R$ let $\E_x$ be the expectation operator associated to $\Pro_x$ and shortly write $\Pro:=\Pro_0$ and $\E:=\E_0$. Further, we assume existence of a timeshift operator $\theta$. Through the whole article we assume that $\E(X_1)$ exists and $0<\E(X_1)<\infty$.\\
\subsubsection{Generalities about L\'evy processes}
Before we formally define control problems, we briefly collect the needed results and notations about L\'evy processes. 
\begin{defn}
	For all $x \in \R$ set $$\tau_x:=\inf\{t\geq 0 \mid X_t\geq x\}$$ and $$\mathring\tau_x:=\inf\{t\geq 0 \mid X_t> x\}.$$
\end{defn}
\begin{lemma}[Wald's equation, continuous version]\label{continuouswald}
	Let $Y$ be a L\'evy process such that $\E(Y_1)$ exists and $0<\E(Y_1)\leq\infty$. Let $\tau$ be a stopping time. Then, $$\E \left(X_\tau\right) = \E\left(Y_1\right)\E\left(\tau\right). $$
\end{lemma}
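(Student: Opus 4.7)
The plan is to reduce the claim to the case $\E|Y_1|<\infty$ and then run the standard optional stopping argument on the centered process. For the reduction, I would truncate the upward jumps of $Y$: set
\begin{align*}
\tilde Y^{(n)}_t := Y_t - \sum_{s\leq t}\Delta Y_s\,\ind_{\{\Delta Y_s>n\}},
\end{align*}
which is again a L\'evy process, but now with L\'evy measure supported in $(-\infty,n]$ and hence with $\E|\tilde Y^{(n)}_1|<\infty$ and $\E(\tilde Y^{(n)}_1)\uparrow \E(Y_1)\in(0,\infty]$. Because $\tilde Y^{(n)}_t\uparrow Y_t$ pointwise for every $t$ (in particular for $t=\tau(\omega)$ when $\tau<\infty$, and trivially on $\{\tau=\infty\}$ via the SLLN for L\'evy processes), monotone convergence will transfer an identity proved for each $\tilde Y^{(n)}$ to $Y$ itself.

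In the finite-mean case, the process $M_t:=Y_t-t\,\E(Y_1)$ is a right-continuous, mean-zero $\mathcal F$-martingale by the stationary independent increments of $Y$. Applied at the bounded stopping time $\tau\wedge n$, optional stopping gives
\begin{align*}
\E(Y_{\tau\wedge n}) = \E(Y_1)\,\E(\tau\wedge n)\qquad\text{for every } n\in\N.
\end{align*}
The right-hand side converges monotonically to $\E(Y_1)\,\E(\tau)\in(0,\infty]$, so the whole question reduces to passing to the limit on the left. Since $\E(Y_1)>0$, the strong law $Y_t/t\to\E(Y_1)$ a.s.\ ensures that $Y_t\to\infty$ a.s., so $Y_\tau$ is unambiguously defined in $(-\infty,\infty]$ (with $Y_\tau:=\lim_{t\to\infty}Y_t=\infty$ on $\{\tau=\infty\}$).

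I then split into the cases $\E(\tau)<\infty$ and $\E(\tau)=\infty$. When $\E(\tau)=\infty$, both $Y_{\tau\wedge n}$ and $\E(Y_1)\,\E(\tau\wedge n)$ blow up, and Fatou applied to the positive part of $Y_{\tau\wedge n}$ together with the SLLN yields $\E(Y_\tau)=\infty$, matching the right-hand side. When $\E(\tau)<\infty$, I would prove uniform integrability of $(Y_{\tau\wedge n})_n$ by writing $Y_{\tau\wedge n}=\E(Y_1)(\tau\wedge n)+M_{\tau\wedge n}$: the first summand is dominated by the integrable $\E(Y_1)\,\tau$, while the martingale term is controlled either by a Doob-type maximal estimate applied after a further decomposition of $M$ into its continuous and jump martingale parts, or, more elementarily, by comparing with the classical discrete Wald identity applied to the random walk $(Y_n)_{n\in\N}$ at the discretization $\lceil n\tau\rceil/n\downarrow\tau$.

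The main obstacle is precisely this last uniform integrability / limit interchange: the hypothesis places no second-moment assumption on $Y_1$ and no tail assumption on $\tau$ beyond $\E(\tau)<\infty$, so the martingale part has to be handled directly, not by an $L^2$ argument. I expect the cleanest route is the discretization-plus-classical-Wald route, since that reduces the continuous-time statement to a fact that is already known for random walks and only requires right-continuity of $Y$ to identify $Y_{\lceil n\tau\rceil/n}\to Y_\tau$ a.s.\ in the limit $n\to\infty$.
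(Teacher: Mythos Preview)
The paper does not actually prove this lemma; it simply cites an external reference. So there is no in-paper argument to compare against, and your plan has to be judged on its own.

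Your overall architecture---truncate upward jumps to reduce to $\E|Y_1|<\infty$, use that $M_t=Y_t-t\,\E(Y_1)$ is a martingale, apply optional stopping at $\tau\wedge n$, then let $n\to\infty$---is the standard one and is sound, and you correctly isolate the limit interchange as the crux. But neither of your two proposed routes for that step closes the gap as written. The discretization route does not sidestep the problem: discrete Wald gives the exact identity $\E\bigl(Y_{\lceil n\tau\rceil/n}\bigr)=\E(Y_1)\,\E\bigl(\lceil n\tau\rceil/n\bigr)$, but sending $n\to\infty$ on the left still demands precisely the $L^1$ convergence you are trying to establish; right-continuity of $Y$ only gives a.s.\ convergence of $Y_{\lceil n\tau\rceil/n}$ to $Y_\tau$, not convergence of expectations. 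The Doob-maximal route also fails under a bare first-moment hypothesis, and your ``continuous plus jump martingale'' split is the right instinct but not yet enough, since the compensated jump martingale need not be square-integrable. What does work is a finer L\'evy--It\^o decomposition of $Y$: the drift is trivial; the Brownian part and the compensated small-jump martingale each have second moment proportional to $t$, so when stopped at $\tau\wedge n$ they are $L^2$-bounded (by $c\,\E\tau$) and hence uniformly integrable; and the large-jump part splits into its positive and negative pieces, each a finite-mean subordinator $S$ for which $S_{\tau\wedge n}\uparrow S_\tau$ monotonically, so monotone convergence together with $\E(S_{\tau\wedge n})=\E(S_1)\,\E(\tau\wedge n)$ yields $\E(S_\tau)=\E(S_1)\,\E(\tau)<\infty$ directly. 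Reassembling the pieces then gives $\E(Y_\tau)=\E(Y_1)\,\E(\tau)$.
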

\begin{proof}
	This result supposedly goes back to Doob in 1957 and an even more general version can be found in \cite[Corollary 1]{waldsequation}. 
\end{proof}
\begin{lemma}\label{firstentryfinitenessLevyLEM}
	Let $Y$ be a L\'evy process such that $\E(Y_1)$ exists and \linebreak $0<\E(Y_1)<\infty$. Then for all $a\geq 0$ holds $\E(\tau_a)<\infty$ and $\E(Y_{\tau_a})<\infty$. 
\end{lemma}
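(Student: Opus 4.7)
The plan is to establish $\E(\tau_a) < \infty$ first, and then deduce $\E(Y_{\tau_a}) < \infty$ directly from Wald's equation (Lemma \ref{continuouswald}). The case $a = 0$ is trivial since $\tau_0 = 0$, so I assume $a > 0$. Almost sure finiteness of $\tau_a$ is immediate from the strong law of large numbers for L\'evy processes: because $\E(Y_1)$ is finite and strictly positive, $Y_t/t \to \E(Y_1) > 0$ almost surely, so $Y_t \to \infty$ and thus $\tau_a < \infty$ a.s.

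The core task is to bound $\E(\tau_a)$. I would pass to the integer-time skeleton by setting $S_n := Y_n$, which is a classical random walk with i.i.d.\ increments distributed as $Y_1$ and therefore with finite positive mean. Let $N_a := \inf\{n \in \N : S_n \geq a\}$; since $S_n \geq a$ forces $\tau_a \leq n$, the pointwise inequality $\tau_a \leq N_a$ reduces the claim to $\E(N_a) < \infty$. This is a classical fact for random walks of finite positive mean, which I would justify via the Wiener--Hopf factorisation: as $(S_n)$ drifts to $+\infty$, the strict descending ladder epoch is defective, and this forces the strict ascending ladder epoch $T_+ := \inf\{n \geq 1 : S_n > 0\}$ to satisfy $\E(T_+) < \infty$. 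Decompose $N_a$ as $T_+^{(1)} + \cdots + T_+^{(K_a)}$, where the $T_+^{(i)}$ are i.i.d.\ copies of $T_+$ and $K_a$ is the first index at which the accumulated strict ascending ladder heights exceed $a$. Because the ladder heights are strictly positive, their Laplace transform at any positive argument is strictly less than $1$, so a Chernoff bound yields $\Pro(K_a > k) \leq C\rho^k$ for some $\rho \in (0,1)$, hence $\E(K_a) < \infty$. The discrete Wald identity then gives $\E(N_a) = \E(K_a)\,\E(T_+) < \infty$, and consequently $\E(\tau_a) \leq \E(N_a) < \infty$. Applying Lemma \ref{continuouswald} with $\tau = \tau_a$ finally produces $\E(Y_{\tau_a}) = \E(Y_1)\,\E(\tau_a) < \infty$; since $Y_{\tau_a} \geq a \geq 0$, this is automatically an absolute expectation.

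The genuine obstacle is the finiteness of $\E(N_a)$ under only a first-moment assumption. A naive Chebyshev bound on $\Pro(S_n < a)$ decays only like $1/n$, which is not summable, so the straightforward estimate $\E(N_a) \leq \sum_n \Pro(S_n < a)$ is inconclusive; the argument genuinely needs the ladder structure (or an equivalent renewal-theoretic input) to provide exponential-type tail control. One could alternatively work directly with the ascending ladder process of $Y$ in continuous time instead of the integer skeleton, but this merely rearranges the same ingredients, so I prefer the discrete-skeleton route for its transparency.
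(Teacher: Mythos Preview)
Your proof is correct and follows essentially the same route as the paper: both pass to the integer-time random walk skeleton and then invoke Wald's equation (Lemma~\ref{continuouswald}) for the second claim. The paper simply cites \cite{gut74} for the random-walk fact $\E(N_a)<\infty$, whereas you supply a self-contained argument for it via the ladder decomposition; the overall structure is identical.
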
 
\begin{proof}
	The first part of the claim is a direct consequence from the analogous result for random walks that is proven in \cite[Theorem 2.1]{gut74}. The second part then follows with Lemma \ref{continuouswald}.
\end{proof}

\begin{lemma}\label{tauoffen=tauabgeschl levy}
	 If $X$ is not a compound Poisson process or has a L\'evy measure with no atoms, then for all $x \in \R$ holds $\mathring{\tau}_x=\tau_x$ a.s. under all $\Pro_y$, $y \in \R \setminus \{x\}$. 
\end{lemma}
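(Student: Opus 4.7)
The plan is to combine the strong Markov property with Blumenthal's 0-1 law and then split into the two alternatives of the hypothesis. Since $\tau_x \leq \mathring{\tau}_x$ always, and $X_{\tau_x} > x$ forces $\mathring{\tau}_x \leq \tau_x$, we obtain
\[
\{\mathring{\tau}_x > \tau_x\} \subseteq \{X_{\tau_x} = x\}.
\]
On $\{X_{\tau_x} = x\}$ the strong Markov property at $\tau_x$ yields a shifted L\'evy process $Y_t := X_{\tau_x+t} - x$ with the same law as $X$ under $\Pro_0$ and independent of $\mathcal F_{\tau_x}$, for which $\mathring{\tau}_x - \tau_x = \inf\{t > 0 : Y_t > 0\}$. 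By Blumenthal's 0-1 law, $q := \Pro_0(\inf\{t > 0 : X_t > 0\} > 0) \in \{0,1\}$, so
\[
\Pro_y(\mathring{\tau}_x > \tau_x) = q \cdot \Pro_y(X_{\tau_x} = x).
\]

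If $q = 0$, the claim is immediate. Otherwise $q = 1$ and the classical characterization of regularity of half-lines for L\'evy processes (see, e.g., \cite{kypri}, Chapter 6) forces $X$ to have no Brownian component, bounded variation, non-positive natural drift, and finite restriction of its L\'evy measure $\Pi$ to $(0,\infty)$; in particular $X$ cannot creep upward, so on $\{\tau_x<\infty\}$ the level $x$ is first reached via a strictly positive jump, i.e., $X_{\tau_x-} < x$ and $\Delta X_{\tau_x} = X_{\tau_x} - X_{\tau_x-} > 0$.

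Now the two alternatives of the hypothesis come in. If $\Pi$ is atomless, then conditional on $\mathcal F_{\tau_x-}$ the distribution of $\Delta X_{\tau_x}$ is the positive part of $\Pi$ restricted to $[x - X_{\tau_x-},\infty)$ and renormalized, which is atomless; hence $\Pro_y(X_{\tau_x-} + \Delta X_{\tau_x} = x) = 0$ by integration. If instead $X$ is not a compound Poisson process, then combining with $q = 1$ forces $X$ to have a strictly negative natural drift or infinite-activity negative jumps (or both). In either case, between consecutive positive jumps, $X$ evolves continuously in time in a non-degenerate way, so $X_{\tau_x-}$ has an atomless distribution on $(-\infty, x)$. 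The compensation formula (L\'evy system) then couples the jump size with $\mathcal F_{\tau_x-}$ to give $\Pro_y(X_{\tau_x-} + \Delta X_{\tau_x} = x) = 0$ as well.

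The main obstacle is the non-compound-Poisson case, which requires converting the structural consequences of $q = 1$ combined with the exclusion of compound Poisson into atomlessness of the pre-jump distribution. The atomless-L\'evy-measure alternative, by contrast, reduces to a direct conditioning argument on the jump distribution.
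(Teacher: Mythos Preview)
The paper's own proof simply cites \cite[Lemma~2]{pecherskiirogozin} for the non-compound-Poisson case and calls the compound Poisson case with atomless L\'evy measure ``elementary,'' so your direct argument is considerably more informative. The reduction via the strong Markov property and Blumenthal's law to showing $\Pro_y(X_{\tau_x}=x)=0$ when $q=1$ is correct, and the atomless-$\Pi$ branch is essentially fine. One caveat: your claim that $q=1$ forces $\Pi|_{(0,\infty)}$ to be finite is \emph{false}---Bertoin's regularity criterion for zero-drift bounded-variation processes permits irregularity for $(0,\infty)$ with infinite positive jump activity provided negative activity dominates appropriately. Fortunately you do not actually need this. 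What $q=1$ does give is that the strict ascending ladder epochs are discrete, hence $\tau_x$ is such an epoch and $\Delta X_{\tau_x}>0$ almost surely; and the ``renormalization'' only requires $\Pi([x-X_{\tau_x-},\infty))<\infty$, which holds because $X_{\tau_x-}<x$ strictly and L\'evy measures are finite away from the origin.

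The genuine gap is the non-compound-Poisson branch. Saying the process ``evolves in a non-degenerate way between consecutive positive jumps'' so that ``$X_{\tau_x-}$ has an atomless distribution'' is not a proof, and your (incorrect) finite-positive-activity claim is doing hidden work in the phrase ``between consecutive positive jumps.'' A clean fix uses the compensation formula directly: since $\Delta X_{\tau_x}>0$ on $\{q=1\}$,
\[
\Pro_y(X_{\tau_x}=x)=\E_y\int_0^{\tau_x}\Pi(\{x-X_t\})\,dt=\sum_{a:\,\Pi(\{a\})>0}\Pi(\{a\})\,\E_y\int_0^{\tau_x}\ind_{\{X_t=x-a\}}\,dt,
\]
so it suffices that the expected occupation time of each singleton vanishes. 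For non-compound-Poisson $X$ this holds: either $\Pi(\R)=\infty$ or there is a Gaussian part, in which case every $X_t$ ($t>0$) has an atomless law and Fubini gives zero occupation; or $X$ is a compound Poisson process plus a nonzero drift, in which case the piecewise-linear sample paths spend zero Lebesgue time at any point. This replaces your heuristic with a rigorous argument and makes the dependence on the hypothesis transparent.
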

\begin{proof}
	This is proven in \cite[Lemma 2]{pecherskiirogozin} in case that $X$ is not a compound Poisson process. The case that $X$ is a compound Poisson process whose L\'evy measure has no atoms follows with elementary arguments. 
\end{proof}

\begin{lemma}\label{xicontinuousLevyLEM} Assume $X$ is not a compound Poisson process or has a L\'evy measure with no atoms. 
	Define the mapping $\xi$ by $$
	\xi(x,y):=\E_x(\tau_y) $$
	for all $x,y\in \R$ with $x< y$. 
	Then, $\xi$ is a continuous real valued mapping, that is non-decreasing in the second and non-increasing in the first argument.
\end{lemma}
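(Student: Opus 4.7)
The plan is to reduce the statement to a one-variable problem via the spatial homogeneity of the L\'evy process $X$: under $\Pro_x$ the process $(X_t-x)_{t\ge 0}$ has the law of $X$ under $\Pro_0$, so $\xi(x,y)=\E_0(\tau_{y-x})=:g(y-x)$ for $x<y$. Then the monotonicity in the two arguments is immediate from the pathwise monotonicity of $c\mapsto\tau_c$, and the real-valuedness of $\xi$ is precisely Lemma~\ref{firstentryfinitenessLevyLEM}. The remaining task, and the only non-trivial one, is the continuity of $g$ on $(0,\infty)$.

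For continuity I would exploit that the running supremum $\bar X_t:=\sup_{s\le t}X_s$ is a non-decreasing c\`adl\`ag process. Its right-continuous generalised inverse is $\inf\{t:\bar X_t>c\}$, which coincides with $\mathring\tau_c$, and so $c\mapsto\mathring\tau_c$ is non-decreasing and right-continuous in $c$. By Lemma~\ref{tauoffen=tauabgeschl levy} we have $\tau_c=\mathring\tau_c$ $\Pro_0$-a.s.\ for every $c>0$, since $0\neq c$. Right-continuity of $g$ is then clear: for $c_n\downarrow c$ we have $\tau_{c_n}=\mathring\tau_{c_n}\downarrow\mathring\tau_c=\tau_c$ a.s.; dominated convergence against the integrable majorant $\tau_{c_1}$ (Lemma~\ref{firstentryfinitenessLevyLEM}) gives $g(c_n)\to g(c)$.

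The delicate part is left-continuity. For $c_n\uparrow c$, the sequence $\tau_{c_n}$ is non-decreasing, and monotone convergence produces $g(c_n)\uparrow\E(\lim_n\tau_{c_n})$, so it suffices to show $\lim_n\tau_{c_n}=\tau_c$ a.s. Since $\tau_{c_n}=\mathring\tau_{c_n}$ a.s., the limit equals the left limit at $c$ of the non-decreasing function $c'\mapsto\mathring\tau_{c'}$, and a short monotonicity argument (using that $\{t:\bar X_t\ge c\}=\bigcap_{c'<c}\{t:\bar X_t>c'\}$) identifies this left limit with $T_c:=\inf\{t:\bar X_t\ge c\}$, the left-continuous inverse of $\bar X$. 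Thus the proof reduces to showing $T_c=\tau_c$ a.s.

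The inequality $T_c\le\tau_c$ is trivial, and $T_c<\tau_c$ occurs only on the exceptional event that $X$ approaches $c$ in its running supremum without actually reaching level $c$ before $\tau_c$. A direct path analysis shows that on this event necessarily $X_{T_c}^-=c$ and $X$ has a strictly negative jump at $T_c$. I expect this to be the main obstacle: one has to check that, under the blanket hypothesis of the lemma, the event \emph{there exists a jump time $t>0$ of $X$ with $X_{t^-}=c$} is a $\Pro_0$-null set. The argument is exactly the one behind Lemma~\ref{tauoffen=tauabgeschl levy}: under the hypothesis of non-compound-Poisson or non-atomic L\'evy measure, the law of $X_{t^-}$ at the jump times has no atom at the fixed level $c$, so the exceptional event is indeed null. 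This then gives $\tau^-=T_c=\tau_c$ a.s., completing left-continuity and hence the proof.
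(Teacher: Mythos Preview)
Your proof is correct and follows the same route as the paper: reduce to the one-variable map $g(c)=\E_0(\tau_c)$ via spatial homogeneity, invoke Lemma~\ref{firstentryfinitenessLevyLEM} for finiteness, and obtain continuity from Lemma~\ref{tauoffen=tauabgeschl levy} together with monotone/dominated convergence. Your treatment of the left-continuity step is actually more careful than the paper's, which simply asserts that Lemma~\ref{tauoffen=tauabgeschl levy} implies $\lim_{a\nearrow y}\tau_a=\tau_y$ without isolating the exceptional event $\{T_c<\tau_c\}\subseteq\{\exists\, t:\ X_{t^-}=c,\ \Delta X_t\neq 0\}$ and arguing (via the atomlessness of the marginals under the stated hypothesis) that it is $\Pro_0$-null.
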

\begin{proof}
	Lemma \ref{firstentryfinitenessLevyLEM} yields that for all $x,y\in \R$ with $x\leq y$ holds $\E_x(X_{\tau_y})<\infty$. Further, Lemma \ref{tauoffen=tauabgeschl levy} implies that for all $y \in \R$ holds $\lim_{a\nearrow y} \tau_a = \tau_y$ a.s. under all $\Pro_z$, $z\in \R\setminus \{y\}$. With dominated convergence we hence get continuity of $\xi$ in the second argument, but for all $x,y\in \R$ with $x< y$ we can use the homogeneity of $X$ to write $\E_x(\tau_y)=\E(\tau_{y-x})$, which yields the claim. 
\end{proof}
\begin{lemma}\label{XIcontLEMlevy}
	Let	$f:\E\rightarrow \R$ be a continuous function, $\overline x \in \R$ a root of $f$ and assume that $X$ is a subordinator that is either no compound Poisson process or a compound Poisson process whose L\'evy measure has no atoms. Then the function \begin{align*}
	\Xi:\R \rightarrow \R; x\mapsto \E_x\left(\int\limits_0^{\tau_{\overline x}}f(X_s) ds \right)
	\end{align*} is continuous. 
\end{lemma}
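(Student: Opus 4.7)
The plan is to use the translation invariance of $X$ to rewrite $\Xi$ on a common probability space and then apply dominated convergence. By stationary increments, for every $x\le \overline x$,
$$
\Xi(x)=\E\left(\int_0^{\tau_{\overline x-x}} f(x+X_s)\,ds\right),
$$
while for $x>\overline x$ the subordinator property forces $\tau_{\overline x}=0$ under $\Pro_x$ and hence $\Xi(x)=0$. So $\Xi$ is locally zero on $(\overline x,\infty)$, and it suffices to prove continuity at points $x_0\le\overline x$.

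For $x_0<\overline x$ I would first verify that $\tau_{\overline x-x}\to\tau_{\overline x-x_0}$ $\Pro$-almost surely as $x\to x_0$. Left-continuity of $y\mapsto \tau_y$ follows from the right-continuous, non-decreasing sample paths of the subordinator; right-continuity at the level $y_0:=\overline x-x_0\ne 0$ reduces to the identity $\lim_{y\downarrow y_0}\tau_y=\mathring\tau_{y_0}$, which Lemma~\ref{tauoffen=tauabgeschl levy} (applied under $\Pro=\Pro_0$, valid because $y_0\ne 0$) identifies with $\tau_{y_0}$. Combined with the continuity of $f$, this gives pointwise a.s.\ convergence of the integrand. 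To dominate, pick any $\delta\in(0,\overline x-x_0)$; for $x$ close enough to $x_0$ one has $x+X_s\in[x_0-\delta,\overline x]$ whenever $s\le\tau_{\overline x-x}$, hence $|f(x+X_s)|\le M:=\sup_{y\in[x_0-\delta,\overline x]}|f(y)|<\infty$, and $\tau_{\overline x-x}\le\tau_{\overline x-x_0+\delta}$ has finite expectation by Lemma~\ref{firstentryfinitenessLevyLEM}. Dominated convergence then yields $\Xi(x)\to\Xi(x_0)$.

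The delicate case is the endpoint $x_0=\overline x$, where $\tau_{\overline x-x}$ need not collapse as $x\uparrow\overline x$ in the compound Poisson case, so the template above breaks down. Here I would exploit the assumption that $\overline x$ is a root of $f$: since the subordinator property gives $x+X_s\in[x,\overline x]$ for $s\le\tau_{\overline x-x}$, one has the crude bound
$$
|\Xi(x)|\le \sup_{y\in[x,\overline x]}|f(y)|\cdot\E(\tau_{\overline x-x}).
$$
The expectation stays bounded as $x\uparrow\overline x$ (say by $\E(\tau_1)$ once $\overline x-x\le 1$, using Lemma~\ref{firstentryfinitenessLevyLEM} again), while the supremum tends to $0$ by continuity of $f$ together with $f(\overline x)=0$. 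Thus $\Xi(x)\to 0=\Xi(\overline x)$ from the left, and continuity from the right is automatic since $\Xi\equiv 0$ on $(\overline x,\infty)$. The main obstacle is precisely this endpoint: without the root assumption continuity at $\overline x$ would genuinely fail for compound Poisson subordinators with zero drift, and it is the condition $f(\overline x)=0$ that closes the argument.
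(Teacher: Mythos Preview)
Your argument is correct. The treatment of the endpoint $x_0=\overline x$ is essentially identical to the paper's: both bound $|\Xi(x)|$ by $\sup_{[x,\overline x]}|f|\cdot\E_x(\tau_{\overline x})$, note that the expectation stays bounded by monotonicity, and let the supremum tend to zero using continuity of $f$ together with $f(\overline x)=0$.

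For interior points $x_0<\overline x$ the routes differ. The paper argues by approximating $f$ with simple functions and invoking Lemma~\ref{xicontinuousLevyLEM} (continuity of $(x,y)\mapsto\E_x(\tau_y)$), so that $\Xi$ becomes a uniform limit of finite linear combinations of continuous functions. You instead translate to a fixed probability space, establish the a.s.\ continuity of $y\mapsto\tau_y$ directly from path properties plus Lemma~\ref{tauoffen=tauabgeschl levy}, and run dominated convergence with the explicit majorant $M\cdot\tau_{\overline x-x_0+\delta}$. Your route is a little more self-contained, since it does not pass through the auxiliary Lemma~\ref{xicontinuousLevyLEM} or a simple-function approximation; the paper's route, on the other hand, makes the continuity of $\Xi$ on $(-\infty,\overline x)$ an immediate corollary of a result it has already proved. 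Either way the analytic content is the same.
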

\begin{proof} 
	Continuity in all points $y\in \R$ with $y<\underline x$ follows from approximation with simple functions and Lemma \ref{xicontinuousLevyLEM}. To see that $\Xi$ is continuous on whole $\R$, note that $\Xi(y)=0$ for all $y\in \R$ with $y\geq \overline x$ hence it remains to be shown that $\lim\limits_{y\nearrow \overline x}\Xi(y)=0$. Let $z\in \R$ with $z<\overline x$. Let $\epsilon >0$. Then for all $y\in \R$ with $\overline x-y<\overline x-z$ such that $ {\max_{x\in [y,\overline x]}|f(x)|}<{\E_{z}(\tau_{\overline x})} \epsilon$ we have $$\Xi(y)<\epsilon$$ and because $f$ is continuous and $\overline x$ a root of $f$, the set of such values $y$ is a non-emty interval. 
\end{proof}
\label{ladderheightprocess}
Following \cite[Definition 6.1]{kypri} we define a local time at the maximum of $X$ as a continuous, non-decreasing, adapted process $(L)_{t\in [0,\infty)}$ on $[0,\infty)$ with the following properties: \begin{enumerate}
	\item The support of $dL$ is $\overline{\{t\in [0,\infty)  \mid \overline X_t=X_t\}}$.
	\item For each stopping time $\tau$ with the property, that a.s. $X_\tau=\overline X_\tau$, the process $$(L_{\tau+t}-L_\tau)_{t\in [0,\infty)}$$ is independent of $\mathcal F_\tau$ and is distributed as $(L_{t})_{t\in [0,\infty)}$ under $\Pro$.
\end{enumerate}
Such a local time exists for a wide class of L\'evy processes. Nevertheless, if (and only if) $0$ is not regular for the positive halfline (this means that $\mathring{\tau}_0\neq 0$ a.s. under $\Pro_0$) such a continuous local time fails to exist, then it is possible to construct a right continuous alternative we will tacitly work with instead, see \cite[Theorem 6.6]{kypri}, the references thereafter for the proof of existence and \cite[Section 6.1]{kypri} for more details and explicit constructions of local times in several cases. For a local time $L$ we set $L_\infty:=\lim\limits_{t\rightarrow \infty} L_t$ and define the inverse local time process $L^{-1}$ by \begin{align*}
L^{-1}_t:=\begin{cases}
\inf\{s>0  \mid L_s>t\}; &\text{  if } t <L_\infty\\
\infty; &\text{  else}
\end{cases} \ \ \ \ \ \ \ &\forall t\in[0,\infty)
\end{align*} It can be seen in the definition that a local time can only be unique up to a multiplicative factor, which we chose conveniently for our purposes in the next definition. 
\begin{defn}\label{ladderheightdefinition}
	Let $L$ be a local time at the maximum and $H$ defined by $$H_t:=X_{L^{-1}\left(t\right)}$$ for all $t \geq 0$ the ladder height process. $\left(H,L^{-1}\right)$ is a L\'evy process, even a bivariate subordinator. With Wald's equation (see Lemma \ref{continuouswald}), we have $\E\left(L^{-1}\left(\tau_x\right)\right)= \E\left(L_1^{-1}\right)^{-1}\E\left(\tau_x\right).$ Since $L$ is only defined up to a multiplicative constant, w.l.o.g. we choose $L$ such that $\E\left(L_1^{-1}\right)=1$ and, hence, $\E\left(L^{-1}\left(\tau_x\right)\right)=\E\left(\tau_x\right)$ for all $x \in \R$.   
	Further, we set $$\hat{\tau}_x:=L^{-1}\left(\tau_x\right)=\inf\{t\geq 0  \mid H_t\geq x\}$$ for all $x \in \R$. \\
	In the same way we define the descending ladder height process $H^\downarrow$ as the ladder height process of $-X$.\\
\end{defn}

\subsubsection{Impulse control problems}
Let $\gamma$ and $h$ be real functions and assume:
\begin{assumption}\label{bedingungstetig}
	\begin{enumerate}
		\item $\gamma$ is nondecreasing and differentiable. 
		\item\label{hassumption} $h$ is nonnegative, continuous and for all $x,y \in \R$ with $x<y$, we have $$\E_x\left(\int_0^{\tau_y}h\left(X_s\right)\ ds\right)< \infty.$$ 
	\end{enumerate}
\end{assumption}
Define the set $\mathcal T$ as the set of all stopping times $\tau$ such that we have $\E_x\left(\tau \right)<\infty$ and $ \E_x\left(\int_0^\tau h\left(X_s\right)~ds\right)<\infty$ for all $x \in \R$.
A control strategy $S=\left(\tau_n,\xi_n\right)_{n \in \N}$ consists of a sequence of stopping times $\left(\tau_n\right)_{n \in \N}$ in $\mathcal T$ that fulfils $\lim_{n\rightarrow \infty}~\tau_n=\infty$ a.s. under all $\Pro_{x}$ and $\mathcal F_{\tau_n}$ measurable random variables $\xi_n$ with $\E_y(\gamma(\xi_n))\in \R$ indicating whereto the process is shifted at time $\tau_n$. We model the controlled process recursively by $$
X^S_t:=X_t-\sum_{n;\tau_n \leq t}\left(X^S_{\tau_n,-}-\xi_n\right)$$ for each strategy $S=\left(\tau_n,\xi_n\right)_{n \in \N}$. Herein we use  $$X^S_{\tau_n,-}:=X_{\tau_n}-\sum_{i=1}^{n-1}\left(X^S_{\tau_i,-}-\xi_i\right)$$ for the value right before the $n$-th shift (Note that due to $X$ not being continuous this value may deviate from both $X^S_{\tau_n}$ and $X^S_{\tau_n-}$). 
We define the long term average value of the process controlled by an  strategy $S=\left(\tau_n,\xi_n\right)$ by \[
J_x\left(S\right):= \liminf_{T\rightarrow\infty} \frac{1}{T}\E_x\left(\sum_{n:\tau_n\leq T}\left(\gamma\left(X^S_{\tau_n,-}\right)-\gamma\left(\xi_n\right)-K\right)-\int\limits_0^{T}h\left(X^S_s\right)ds\right)
\]
where $\gamma$ is the so called payoff function, $K>0$ models fixed cost and $h$ is called running costs. 
Further, for each $B\subseteq \R$ we define $\mathcal S_B$ as the set of all control strategies $S=\left(\tau_n,\xi_n\right)_{n \in \N}$ such that $X^S_{\tau_n,-}\geq \xi_n\in B$  a.s. under all $\Pro_{x}$ for all $ n \in \N$ and call all elements of $\mathcal S_B$ admissible strategies. We fix $B \subseteq \R$ throughout the following sections and let  \begin{align}
v\left(x\right):=\sup_{S \in \mathcal{S}_{B} }J_x\left(S\right) \label{vdefEQ}
\end{align} define the value function for all $x \in \R$.\\
Before defining further necessary objects, let us make one remark on some of the assumptions. 
The Assumption \ref{bedingungstetig}, \ref{hassumption}. is a quite natural one to make. Without this condition, the state space is basically divided in two or more regions such that we cannot let the process go from one region to the other, otherwise we have to pay an infinite amount of costs. So in this case we would basically end up with several disjoint control problems, depending on the starting point. 
On the other hand, with the later developed tools and notations it will be clear that the Assumption \ref{bedingungstetig}, \ref{hassumption}. is not too restrictive and holds for almost all choices for $h$. So the finiteness of the integral depends mainly on the length and amplitude of excursions from the maximum of $X$ and since these are not dependent on the starting point, these integral will for most functions $h$ either for all or for no pair of points $x,y$ with $x<y$ be finite.\\

\subsubsection{Some important stopping times and strategies}\label{threshholdtimeandstrategydefinition}

Because threshold times occur frequently, we write $\tau_x=\inf\{t\geq 0|X_t\geq x\}$ for all $x \in \R$.
Since these strategies play an important role later on, we set $$\mathcal T_x:=\{\tau \in \mathcal T|X_\tau\geq x \ \text{a.s. under} \ \Pro_x \}$$ for each $x\in \R$ (where $\mathcal T$ is defined at the beginning of Section~\ref{notationen})  and for all $\tau \in \mathcal T_x$ we set $$\tau_1:=\tau,$$ $$\tau_n:=\tau\circ \theta_{\tau_{n-1}}+\tau_{n-1} \text{ for all } n>1$$ and set $R\left(\tau,x\right):=\left(\tau_n,x\right)_{n\in \N}$. Note that $R\left(\tau,x\right):=\left(\tau_n,x\right)_{n\in \N}$ is an admissible strategy whenever $\E_x\left(\tau\right)>0$ and $x\in B$. \\


\subsubsection{Degenerate case}
As last precaution, we take a look at the degenerate case that an infinite gain per period is possible. 
\begin{lemma}\label{endlichkeitslemma}
If there is $x \in B$ and $\tau \in \mathcal T_x$ such that $$\E_x\left[\gamma\left(X_{\tau}\right)-\int_{{ 0}}^\tau h\left(X_s\right) \ ds  \right]= \infty,$$ then $v\left(y\right)=\infty$ for all $y \in \R$. 
\end{lemma}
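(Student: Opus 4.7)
The plan is to exhibit, for every $y \in \R$, an admissible strategy whose long term average value is $+\infty$. The natural choice is the restarting strategy $R(\tau,x)$ of Section~\ref{threshholdtimeandstrategydefinition}, possibly preceded by a single ``ramp-up'' shift that brings the controlled process into state $x$. Since $x \in B$ and the hypothesis forces $\E_x(\tau) > 0$ (otherwise $\tau = 0$ $\Pro_x$-a.s., making the displayed expectation equal to $\gamma(x) < \infty$), the strategy $R(\tau,x)$ lies in $\mathcal{S}_B$.

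First I argue that $J_x(R(\tau,x)) = \infty$. By the strong Markov property the successive cycles are i.i.d.; writing $B_n := \gamma\bigl(X^{R(\tau,x)}_{\tau_n,-}\bigr) - \gamma(x) \geq 0$ and $C_n := \int_{\tau_{n-1}}^{\tau_n} h\bigl(X^{R(\tau,x)}_s\bigr)\,ds \geq 0$, the hypothesis combined with $\tau \in \mathcal{T}$ gives $\E_x[B_1] = +\infty$ while $\E_x[C_1] < \infty$. With $M_T := \#\{n : \tau_n \leq T\}$, the variable $M_T + 1$ is a stopping time in the cycle filtration; bounding the leftover running cost on $[\tau_{M_T}, T]$ by $C_{M_T+1}$ and applying Wald's equation yields
\[
\E_x\!\left[\text{total on }[0,T]\right] \;\geq\; \E_x\!\left[\textstyle\sum_{n=1}^{M_T} B_n\right] - K\,\E_x[M_T] - \E_x[M_T+1]\,\E_x[C_1].
\]
To handle $\E_x[B_1] = \infty$, truncate $B_n^{(k)} := B_n \wedge k$; Wald applied to $M_T+1$ together with $B_n \geq B_n^{(k)}$ yields $\E_x[\sum_{n=1}^{M_T} B_n] \geq \E_x[M_T+1]\,\E_x[B_1^{(k)}] - k$. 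Dividing by $T$ and invoking the elementary renewal estimate $\E_x[M_T]/T \to 1/\E_x(\tau)$, the liminf is at least $(\E_x[B_1^{(k)}] - K - \E_x[C_1])/\E_x(\tau)$; letting $k \to \infty$ with monotone convergence delivers $J_x(R(\tau,x)) = \infty$.

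For arbitrary $y \in \R$, prepend to $R(\tau,x)$ one shift at time $\hat\tau_1 := \tau_x$ (which equals $0$ whenever $y \geq x$) to $\hat\xi_1 := x$; admissibility follows from Lemma~\ref{firstentryfinitenessLevyLEM}, Assumption~\ref{bedingungstetig} and the admissibility of $R(\tau,x)$. Monotonicity of $\gamma$ and $X_{\tau_x} \geq x$ make the initial-phase contribution bounded below by $-K - \int_0^{\tau_x} h(X_s)\,ds$, which has finite $\Pro_y$-expectation. Writing $g(u) := \E_x[\text{total on }[0,u]]$, step two yields $g(u)/u \to \infty$ together with a crude linear lower bound $g(u) \geq -Cu - C'$. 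The strong Markov property at $\hat\tau_1$ thus gives
\[
\E_y[\text{total of }\hat S\text{ on }[0,T]] \;\geq\; \text{const.} + \E_y\bigl[\ind_{\{\hat\tau_1 \leq T\}}\,g(T - \hat\tau_1)\bigr],
\]
and the uniform lower bound $g(T-s)/T \geq -C - C'/T$ legitimises Fatou's lemma, concluding $\liminf_T \E_y[\ind_{\{\hat\tau_1 \leq T\}}\,g(T-\hat\tau_1)]/T = \infty$, whence $J_y(\hat S) = \infty$ and $v(y) = \infty$. The main obstacle is step two, where the infinite mean of $B_1$ rules out a direct law of large numbers and forces the Wald--truncation--renewal combination above.
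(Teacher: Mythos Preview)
Your argument is correct and follows essentially the same route as the paper: exhibit the restarting strategy $R(\tau,x)$, truncate the infinite-mean per-cycle reward, pass to the limit, and then handle a general starting point by prepending one shift so that the cycles become a delayed renewal sequence. The only cosmetic difference is that the paper invokes the Renewal Reward Theorem (Lemma~\ref{rere}) directly on the truncated pairs $(\Delta\tau_i, R_i\wedge a)$ and $(\Delta\tau_i, Q_i)$, whereas you unpack this into Wald's identity for the stopping time $M_T+1$ together with the elementary renewal estimate $\E_x[M_T]/T\to 1/\E_x(\tau)$; for the extension to $y\neq x$ the paper simply appeals to the delayed-renewal version of Lemma~\ref{rere}, while you spell out the Fatou step explicitly.
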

\begin{proof}
 We take the strategy $S:=\left(\tau_i,x\right)_{ i \in \N}:=R\left(\tau,x\right)$, write $\Delta \tau_i:=\tau_i-\tau_{i-1}$ for all $n \in \N$ and making use of Lemma \ref{rere} we first show that $J_x\left(S\right)=\infty$. To that end define for all $i \in \N$ $$R_i:=\gamma\left(X^S_{\tau_{i,-}}\right)-\gamma\left(x\right)-K.$$ Now $\left(\Delta \tau_i,R_i\right)_{i \in \N}$ is a sequence of i.i.d. random variables under $\Pro_{x}$, but the $R_i$ violate the integrability requirements of Lemma \ref{rere}. To circumvent that issue, we note that since for all $i \in \N$ $$R_i \stackrel{d}{=} \gamma\left(X_{\tau}\right)  -\gamma\left(x\right)-K,$$ under $\Pro_x$ for the negative part $R_1^-$ of $R_1$ we have $\E_x\left(R_1^-\right)<\infty$. Hence for all $a>0$ the random variable $R_i \wedge a$ is integrable, and  $\left(\Delta \tau_i,R_i\wedge a\right)_{i \in \N}$ fulfils the requirements of Lemma \ref{rere}. This yields 
 \[\frac{1}{T}\E_x\left(\sum_{i=1}^{N\left(T\right)}R_i \wedge a \right)\stackrel{T \rightarrow \infty}{\rightarrow}\frac{\E_x\left(R_1 \wedge a\right)}{\E_x\left( \tau\right)}
 \] where $N\left(t\right):=\sup\{n|\tau_n \leq t\}$. We tackle the running cost term also with Lemma \ref{rere}. To this end, we define $Q_i:=\int_{\tau_{i-1}}^{\tau_i}h\left(X^S_s\right) \ ds $ and note that $\left(\Delta \tau_i, Q_i \right)_{i \in \N}$ also fulfils the requirements of Lemma \ref{rere} and hence we get \begin{align*} \frac{1}{T}&\E_x\left(\int\limits_0^{T}h\left(X^S_s\right)ds\right)\\&\leq\frac{1}{T}\E_x\left(\int\limits_0^{\tau_{N\left(T\right)+1}}h\left(X^S_s\right)ds\right)
 \\&= \frac{1}{T}\E_x\left(\sum_{i=1}^{N\left(T\right)+1}Q_i\right)
 \\&\stackrel{ T \rightarrow \infty}{\rightarrow}\frac{\E_x\left(Q_1\right)}{\E_x\left(\tau\right)}
 \\&=\frac{\E_x\left(\int_{{ 0}}^\tau h\left(X_s\right) \ ds\right)}{\E_x\left(\tau\right)}\\&=:C.
 \end{align*}
 Now for each $T\geq 0$ and each $a \geq 0$: \begin{align*}
\frac{1}{T}&\E_x\left(\sum_{n:\tau_n\leq
	 T}\left(\gamma\left(X^S_{\tau_n,-}\right)-\gamma\left(x\right)-K\right)-\int\limits_0^{T}h\left(X^S_s\right)ds\right)
 \\\geq \frac{1}{T}&\E_x\left(\sum_{n=1}^{N\left(T\right)}\left( R_n\wedge a \right)-\int\limits_{0}^{T}h\left(X^S_s\right)ds\right)\\\geq & \lim_{T \rightarrow \infty}  \frac{\E_x\left(R_1\wedge a\right)}{\E_x\left(\tau\right)}+C.
 \end{align*}
 Now the monotone convergence theorem yields \[\lim_{a\rightarrow \infty} \E_x\left(R_1 \wedge a\right)=\E_x\left(R_1\right)=\infty\] and we finally get 
 $$ \frac{1}{T}\E_x\left(\sum_{n:\tau_n\leq
 	T}\left(\gamma\left(X^S_{\tau_n,-}\right)-\gamma\left(x\right)-K\right)-\int\limits_0^{T}h\left(X^S_s\right)ds\right) \stackrel{T \rightarrow \infty}{\rightarrow} \infty. $$
 It remains to show that also for all $y \in \R$ with $y \neq x$ holds $J_y\left(S\right)=\infty$. This can be easily done by adding a new first control to the strategy constructed above, where we shift the process back to $x$ as soon it exceeds $x$ for the first time. Assumption \ref{bedingungstetig}, 3. ensures that this is still an admissible control strategy and the renewal processes we worked with above then are delayed renewal processes, hence the renewal theoretic results we used still hold, as is worked out e.g. in \cite{asmussen2008applied}. 
\end{proof}



\section{Connection of the Control Problem to Martingales and Optimal Stopping}\label{sec:connection}
This section has two parts. The first part contains a verification theorem that connects the optimal value of the control problem and optimal strategies to a supermartingale. The second part that is also one of the main ingredients of the solution technique in section two connects the control problem to an associated optimal stopping problem. It is also shown that an optimal stopping time for said problem and an optimal starting point for the problem can be merged to an optimal control strategy. 
\subsection{Verification}
The main theorem of this section, Theorem~\ref{veridrei}, establishes sufficient conditions for a strategy to be an optimizer and a real number to be the value of the control problem that later on will help to verify optimality. This enables us to provide sufficient conditions under that strategies consisting of repeatedly stopping at the optimizer of the stopping problem and shifting back to a fixed point specified therein are optimal control strategies. 
\begin{thm}\label{veridrei}
	Let $g$ be a measurable function on $\R$, let $u$ be defined by $$u\left(x,y\right)=\gamma\left(x\right)-\gamma\left(y\right)-K-g\left(x\right)+g\left(y\right)$$ for all $x,y\in \R$ with $y\leq x$, let $\rho \in \R$ and define $$M:=\left({g\left(X_t\right)}-\int_{0}^{t}\left( h\left(X_s\right)+\rho\right) \ ds\right)_{t\geq0}$$
	and for each $S\in  \mathcal S_B$ set $$M^S:=\left({g\left(X^S_t\right)}-\int_{0}^{t}\left( h\left(X^S_s\right)+\rho\right) \ ds\right)_{t\geq0}.$$ 
	\begin{enumerate}[(i)]
		\item\label{item:a} Assume
		    \begin{enumerate}
			
			\item\label{item:a:a} $M$ is a supermartingale under $\Pro_{x}$ for all $x\in \R$,
			\item\label{item:a:b} \[\limsup_{T\rightarrow\infty}\frac{\E_{x}g\left(X^S_T\right)}{T}\geq 0
			\text{ for all } S\in\mathcal{S}_B, \ x \in \R, \]
			\item\label{item:a:c} \[u\left(x,y\right) \leq 0 \text{  for all } x\in \R, y \in B \text{ with } \ y \leq x . \]
		\end{enumerate} Then
		\[v\left(x\right)\leq \rho \text{ for all } x\in \R.\]
		
			\item\label{item:c} If there is a strategy $S^{\uparrow}=\left(\tau_n^{\uparrow},\xi_n^{\uparrow}\right)_{n \in \N}\in \mathcal S_B$  such that
		\begin{enumerate}
			\item\label{item:c:a} 
			\begin{equation*}\E_{x}\left(M_{\tau_n^{\uparrow} \wedge T,-}-M_{\tau_{n-1}^{\uparrow}\wedge T}\right)\geq 0 \text{ for all } n\in \N,\ x\in \R,\ T\geq 0,
			\end{equation*}
			\item\label{item:c:b} 
			\[\lim_{T\rightarrow\infty}\frac{\E_{x}g\left(X^{S^{\uparrow}}_T\right)}{T}\leq 0 \text{  for all } x\in \R, \]
			\item\label{item:c:c}  \[u\left(X_{\tau_n^{\uparrow},-},\xi_n^{\uparrow}\right)\geq 0  \ \ \Pro^{S^{\uparrow}}_{x} \text{-a.s. for all } x\in \R, \ n \in \N. \]
		\end{enumerate}
		Then \[ v\left(x\right)\geq J_x\left({S^\uparrow}\right)\geq \rho, \text{ for all } x \in \R.\]

		\item\label{item:b} If (\ref{item:a}) holds and a strategy $S^*=\left(\tau_n^*,\xi_n^*\right)_{n \in \N}\in \mathcal S_B$ as in \eqref{item:c} exists,
		then \[ v^{\mathcal S}\left(x\right)= \rho \ \text{ for all } x \in \R\]
		and $S^*$ is optimal in $\mathcal S_B$ in the sense that $v\left(x\right)= J_x\left({S^*}\right)$ for all $x\in \R$.
	\end{enumerate}
\end{thm}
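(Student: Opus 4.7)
The plan rests on a single telescoping identity for $M^S$ along the shift times $(\tau_n)$, which powers all three parts. Since the compensator $t\mapsto \int_0^t (h(X^S_s)+\rho)\,ds$ is continuous, at each shift $\tau_n$ one has the clean jump identity $M^S_{\tau_n}-M^S_{\tau_n,-}=g(\xi_n)-g(X^S_{\tau_n,-})$. Between two consecutive shifts the controlled process is, by the strong Markov property applied at $\tau_{n-1}$, a copy of $X$ started at $\xi_{n-1}$, so the (super/sub)martingale-type statements on $M$ assumed in the hypotheses propagate to the corresponding conditional statements for increments of $M^S$ across $[\tau_{n-1}\wedge T,\tau_n\wedge T)$.

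For part (\ref{item:a}), I would fix $S=(\tau_n,\xi_n)\in\mathcal S_B$, $x\in\R$, $T>0$ and telescope
\begin{align*}
\E_x[M^S_T]=g(x)&+\sum_{n\geq 1}\E_x\bigl[M^S_{\tau_n\wedge T,-}-M^S_{\tau_{n-1}\wedge T}\bigr]\\
&+\sum_{n:\tau_n\leq T}\E_x\bigl[g(\xi_n)-g(X^S_{\tau_n,-})\bigr].
\end{align*}
The supermartingale hypothesis (\ref{item:a:a}) forces the first sum to be $\leq 0$; expanding $\E_x[M^S_T]$ on the left, rearranging, and applying $u(X^S_{\tau_n,-},\xi_n)\leq 0$ from (\ref{item:a:c}) (valid because $X^S_{\tau_n,-}\geq\xi_n\in B$) yields
\begin{align*}
\sum_{n:\tau_n\leq T}\E_x\bigl[\gamma(X^S_{\tau_n,-})&-\gamma(\xi_n)-K\bigr]-\E_x\!\int_0^T\!h(X^S_s)\,ds\\
&\leq g(x)+\rho T-\E_x[g(X^S_T)].
\end{align*}
Dividing by $T$, taking $\liminf_{T\to\infty}$, and using (\ref{item:a:b}) to dispose of the last term gives $J_x(S)\leq\rho$; since $S$ was arbitrary, $v(x)\leq\rho$.

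For part (\ref{item:c}) I would run exactly the same decomposition on $S^\uparrow$, but with every sign flipped: (\ref{item:c:a}) now makes the telescoping sum $\geq 0$, (\ref{item:c:c}) lets one bound each shift difference $g(\xi^\uparrow_n)-g(X^{S^\uparrow}_{\tau^\uparrow_n,-})$ from below by $\gamma(\xi^\uparrow_n)-\gamma(X^{S^\uparrow}_{\tau^\uparrow_n,-})+K$, and (\ref{item:c:b}) absorbs the residual $g(X^{S^\uparrow}_T)/T$ in the limit. Collecting the inequalities and passing to $\liminf$ produces $J_x(S^\uparrow)\geq\rho$. Part (\ref{item:b}) is then immediate: (\ref{item:a}) gives $v\leq\rho$ while (\ref{item:c}) applied to $S^*$ gives $\rho\leq J_x(S^*)\leq v(x)$, forcing equality and the optimality of $S^*$.

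The main obstacle I expect is making the telescoping and the transfer from $M$ to $M^S$ fully rigorous: justifying the interchange of sum and expectation over the random, countably many shift increments, controlling the residual contribution $M^S_T-M^S_{\tau_{N(T)}}$ between the last shift before $T$ and $T$ itself, and verifying that the (super/sub)martingale hypotheses on $M$ actually deliver the corresponding inequalities for $M^S$-increments via optional sampling applied at each $\tau_{n-1}$. The integrability built into the definition of $\mathcal T$, combined with Assumption \ref{bedingungstetig}, should be precisely what is needed to legitimise each exchange; once that infrastructure is in place the remaining algebra is bookkeeping.
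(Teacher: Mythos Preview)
Your proposal is correct and follows essentially the same route as the paper's proof: telescope $M^S$ over the stochastic intervals $[\tau_{n-1}\wedge T,\tau_n\wedge T)$, use optional sampling and the (super)martingale hypothesis on each interval to control the increment, absorb the jumps at $\tau_n$ via the inequality on $u$, and finish by dividing by $T$ and invoking the growth condition on $\E_x g(X^S_T)$. The only cosmetic difference is that the paper starts from the reward expression and subtracts the nonpositive sum $\sum_k\E_x\bigl(M^S_{\tau_k\wedge T,-}-M^S_{\tau_{k-1}\wedge T}\bigr)$, whereas you start from $\E_x[M^S_T]$ and rearrange; the algebra is identical, and the paper is equally informal about the interchange of sum and expectation that you flag as the main technical point.
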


\begin{proof}
	We first fix $S=\left(\tau_n,\xi_n\right)_{n\in \N}\in \mathcal S_B$ and $T>0$. Since the process $X$ runs uncontrolled on each stochastic interval $[\tau_{k-1},\tau_k)$, the optional sampling theorem yields that $\E_{x}\left(M^S_{\tau_k\wedge T-}-M^S_{\tau_{k-1}\wedge T}\right)\leq 0$ for each $k\in \N$, $x\in \R$. Hence
	\begin{align*}
& \E_x\left[\sum_{n \in \N :\tau_n\leq T}\left(\gamma\left(X^S_{\tau_n,-}\right)-\gamma\left(\xi_n\right)-K\right) -\int_{0}^{T} h\left(X^S_s\right)\ ds\right]
\\&\leq \E_x\left[\sum_{n \in \N:\tau_n\leq T}\left(\gamma\left(X^S_{\tau_n,-}\right)-\gamma\left(\xi_n\right)-K\right)\textcolor{white}{\int_{0}^{T} \int_{0}^{T} \int_{0}^{T} }\right.
\\ & \left.\textcolor{white}{\sum_{ T} \int_{0}^{T} }-\sum_{k=1}^\infty \left(M^S_{\tau_k\wedge T,-}-M^S_{\tau_{k-1}\wedge T}\right)-\int_{0}^{T} h\left(X^S_s\right)\ ds\right]\\
&=\E_x\left[\sum_{n \in \N:\tau_n\leq T}\left(\gamma\left(X^S_{\tau_n,-}\right)-\gamma\left(\xi_n\right)-K\right) -\sum_{k=1}^\infty \left(g\left(X^S_{\tau_k\wedge T,-}\right)-g\left(X^S_{\tau_{k-1}\wedge T}\right)\textcolor{white}{\sum_{T_T}\int_0^\tau}\right.\right.
 \\& \left.\left.\textcolor{white}{\sum_{ T} \int_{0}^{T} } -\int_{\tau_{k-1}\wedge T}^{\tau_k\wedge T} h\left(X^S_s\right)\ ds-\rho\left({\tau_k\wedge T}\right)+\rho\left({\tau_{k-1}\wedge T}\right) \right)-\int_{0}^{T} h\left(X^S_s\right)\ ds\right] \\
&=\E_x\left[\sum_{1\leq n:\tau_n\leq T}\left(\gamma\left(X^S_{\tau_n,-}\right)-\gamma\left(\xi_n\right)\right.\right.\\&\left.\left.\textcolor{white}{\sum_{n \in \N :\tau_n\leq T} \int_{0}^{T} }-K-g\left(X^S_{\tau_n,-}\right)+g\left(\xi_n\right)\right)-g\left(X^S_T\right)+g\left(X^S_0\right)+\rho T\right]\\
&=\E_x\left[\sum_{1\leq n:\tau_n\leq T}u\left(X^S_{\tau_n,-,\xi_n}\right)\right]-\E_xg\left(X^S_T\right)+g\left(x\right)+\rho T
\\&\leq - \E_xg\left(X^S_T\right)+g\left(x\right)+\rho T.
\end{align*}
	Dividing by $T$ and taking the limit $T\rightarrow\infty$, we obtain the first assertion.\\
	 To prove \eqref{item:c}, we see that similar calculations for $S^\uparrow$ yield
	
		\begin{align*}
	& \E_x\left[\sum_{n \in \N :\tau^\uparrow_n\leq T}\left(\gamma\left(X^{S^\uparrow}_{\tau^\uparrow_n,-}\right)-\gamma\left(\xi^\uparrow_n\right)-K\right) -\int_{0}^{T} h\left(X^{S^\uparrow}_s\right)\ ds \right]
	\\&\geq \E_x\left[\sum_{n \in \N:\tau^\uparrow_n\leq T}\left(\gamma\left(X^{S^\uparrow}_{\tau^\uparrow_n,-}\right)-\gamma\left(\xi^\uparrow_n\right)-K\right)\right. 
	\\& \left.\textcolor{white}{\sum_{ T} \int_{0}^{T} }-\sum_{k=1}^\infty \left(M^{S^\uparrow}_{\tau^\uparrow_k\wedge T,-}-M^{S^\uparrow}_{\tau^\uparrow_{k-1}\wedge T}\right)-\int_{0}^{T} h\left(X^{S^\uparrow}_s\right)\ ds\right]
	\\&=\E_x\left[\sum_{n \in \N:\tau^\uparrow_n\leq T}\left(\gamma\left(X^{S^\uparrow}_{\tau^\uparrow_n,-}\right)-\gamma\left(\xi^\uparrow_n\right)-K\right)\right. \\&\left.\left.\textcolor{white}{\sum_{T} \int_{0}^{T} }-\sum_{k=1}^\infty \left(g\left(X^{S^\uparrow}_{\tau^\uparrow_k\wedge T,-}\right)-g\left(X^{S^\uparrow}_{\tau^\uparrow_{k-1}\wedge T}\right)\textcolor{white}{\int^{T^T}_{T_T}} \right.\right.\right.
	\\&\left. \left. \textcolor{white}{\sum_{ T} \int_{0}^{T} } -\int_{\tau^\uparrow_{k-1}\wedge T}^{\tau^\uparrow_k\wedge T} h\left(X^{S^\uparrow}_s\right) \ ds-\rho{\tau^\uparrow_k\wedge T}+\rho\tau^\uparrow_{k-1}\wedge T\right)-\int_{0}^{T} h\left(X^{S^\uparrow}_s\right)\ ds\right] 
	\\&=\E_x\left[\sum_{1\leq n:\tau^\uparrow_n\leq T}\left(\gamma\left(X^{S^\uparrow}_{\tau^\uparrow_n,-}\right)-\gamma\left(\xi^\uparrow_n\right)\textcolor{white}{\sum_{ T} \int_{0}^{T} }\right.\right.
	\\&\left.\left.\textcolor{white}{\sum_{ T} \int_{0}^{T} }-K-g\left(X^{S^\uparrow}_{\tau^\uparrow_n,-}\right)+g\left(\xi^\uparrow_n\right)\right)-g\left(X^{S^\uparrow}_T\right)+g\left(X^{S^\uparrow}_0\right)+\rho T\right]
	\\&=\E_x\left[\sum_{1\leq n:\tau^\uparrow_n\leq T}u\left(X^{S^\uparrow}_{\tau^\uparrow_n,-,\xi^\uparrow_n}\right)\right]-\E_xg\left(X^{S^\uparrow}_T\right)+g\left(x\right)+\rho T
	\\&\geq - \E_xg\left(X^{S^\uparrow}_T\right)+g\left(x\right)+\rho T.
	\end{align*}
	Again $T\rightarrow \infty$ yields the claim. \\
	Lastly, \eqref{item:b} is a direct consequence of \eqref{item:a} and \eqref{item:c}. 
\end{proof}

	\subsection{Reduction to stopping}

In this section we characterize the value of the impulse control problem by the value of a stopping problem that resembles the maximal gain with one control. In that process we also show that optimal stopping times for that stopping problem, when repeatedly used, form an optimal control strategy. 
	 For all $y, x\in \R$ and all $\rho \in \R$ we set \begin{align*}
	 g^{\mathcal T_y}_\rho\left(x\right):=\sup_{\tau \in \mathcal T_y}\E_x\left(\gamma\left(X_\tau\right)-\int_{0}^{\tau}\left(h\left(X_t\right)+\rho\right) \ dt \right)
	 \end{align*}
	 and \begin{align*}
	  \mathfrak g^{\mathcal T_y}_\rho\left(x\right):=\sup_{\tau \in \mathcal T_y}\E_x\left(\gamma\left(X_\tau\right)-\gamma\left(x\right)-K-\int_{0}^{\tau}\left(h\left(X_t\right)+\rho\right) \ dt \right).
	 \end{align*}
Recall that herein $\mathcal T_y$ was defined as the set of all stopping times $\tau$ with $\E_y\left(\tau \right)<\infty$ and $ \E_y\left(\int_0^\tau h\left(X_s\right)~ds\right)~<~\infty$ and $X_\tau\geq y$ under $\Pro_y$. 
	\begin{remark}
		Looking at the definition of $\mathcal T$ and $\mathcal T_y$ in Section~\ref{notationen} and~\ref{threshholdtimeandstrategydefinition} we see that for all $y, x\in \R$ the expressions $g^{\mathcal T_y}_\rho\left(x\right)$ and $\mathfrak g^{\mathcal T_y}_\rho\left(x\right)$ are well defined and further  $-K \leq \mathfrak g^{\mathcal T_x}_\rho\left(x\right)$, since immediate stopping is allowed in the case $x=y$. 
	\end{remark}
	

\begin{defn}\label{grossgdefinition}
	We define $$\mathfrak G:\R\rightarrow [-K,\infty]; \ \rho \mapsto \sup_{x\in B} \mathfrak g^{\mathcal T_x}_\rho\left(x\right).$$ 
\end{defn}
\begin{lemma}
	$\mathfrak G$ is decreasing, on $\mathfrak{G}^{-1}\left(\left(-K,\infty\right)\right)$ even strictly decreasing, convex and continuous on $\R\setminus \{\beta\}$ where $\beta:=\inf\{\rho \in \R|\mathfrak G\left(\rho\right)=\infty\}$ (with the convention $\inf \emptyset=\infty=-\sup \emptyset$). 
\end{lemma}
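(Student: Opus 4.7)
The proof rests on writing $\mathfrak G$ as a pointwise supremum of affine functions of $\rho$. For any $y\in B$ and $\tau\in\mathcal T_y$, Fubini's theorem gives
\[
\phi_{y,\tau}(\rho):=\E_y\!\left[\gamma(X_\tau)-\gamma(y)-K-\int_0^\tau\bigl(h(X_s)+\rho\bigr)\,ds\right]=A(y,\tau)-K-\rho\,\E_y(\tau),
\]
where $A(y,\tau):=\E_y\bigl[\gamma(X_\tau)-\gamma(y)-\int_0^\tau h(X_s)\,ds\bigr]$, so that $\mathfrak G(\rho)=\sup_{y\in B,\,\tau\in\mathcal T_y}\phi_{y,\tau}(\rho)$. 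Each $\phi_{y,\tau}$ is affine in $\rho$ with non-positive slope $-\E_y(\tau)$.

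Monotonicity (non-increasing) and convexity of $\mathfrak G$ are then immediate, since both properties are preserved under pointwise suprema of affine non-increasing functions. For continuity on $\R\setminus\{\beta\}$, I would invoke the standard convex-analytic fact that a proper convex function is continuous on the interior of its effective domain. Because $\mathfrak G\ge-K$ everywhere (the trivial choice $\tau\equiv 0$ is always admissible and attains $-K$) and $\mathfrak G$ is non-increasing, the effective domain $\{\mathfrak G<\infty\}$ is a right half-line whose left endpoint is $\beta$; finite convexity then gives continuity on $(\beta,\infty)$, while on $(-\infty,\beta)$ the function is identically $+\infty$ and hence trivially continuous.

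For strict decrease on $\mathfrak G^{-1}((-K,\infty))$, fix $\rho_1<\rho_2$ with $\mathfrak G(\rho_2)>-K$ and a maximising sequence $(y_n,\tau_n)$ for $\mathfrak G(\rho_2)$. The affine identity
\[
\phi_{y_n,\tau_n}(\rho_1)=\phi_{y_n,\tau_n}(\rho_2)+(\rho_2-\rho_1)\,\E_{y_n}(\tau_n)
\]
reduces the claim to extracting a subsequence along which $\E_{y_n}(\tau_n)\ge\delta>0$: indeed, passing to the limit then yields $\mathfrak G(\rho_1)\ge\mathfrak G(\rho_2)+(\rho_2-\rho_1)\delta>\mathfrak G(\rho_2)$. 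To produce such a $\delta$, I would exploit that $\tau\equiv 0$ realises $-K$ exactly and satisfies $A(y,0)=\E_y(0)=0$: a sequence with $\E_{y_n}(\tau_n)\to 0$ would force $\phi_{y_n,\tau_n}(\rho_2)=A(y_n,\tau_n)-K-\rho_2\E_{y_n}(\tau_n)$ to converge to $-K$ rather than to $\mathfrak G(\rho_2)>-K$, once one excludes the degenerate scenario of near-instantaneous stopping with strictly positive expected payoff via Wald's identity (Lemma~\ref{continuouswald}), $\E_y[X_\tau-y]=\E(X_1)\,\E_y(\tau)$, together with the regularity of $\gamma$ and the non-negativity of $h$.

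The principal difficulty is precisely this last step, namely turning the qualitative observation that $\tau\equiv 0$ attains the lower bound $-K$ into the quantitative statement that near-optimizers have expected stopping times bounded away from zero. The other three properties follow rather formally from the supremum-of-affine-functions representation and are standard facts of convex analysis applied to a function bounded below by $-K$.
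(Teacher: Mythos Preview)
Your approach is essentially identical to the paper's: both arguments rest on writing $\mathfrak G$ as a pointwise supremum of affine functions of $\rho$, from which convexity and (non-strict) monotonicity are immediate, and continuity away from $\beta$ follows from convexity. The paper's entire proof is a single sentence (``The monotonicity is clear, $\mathfrak G$ is convex as supremum over affine functions, hence also continuous on $\R\setminus\{\beta\}$''), so your write-up is considerably more detailed than what the paper itself provides.

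On strict monotonicity, you go further than the paper, which does not justify this point at all beyond ``monotonicity is clear.'' Your reduction --- showing that near-optimizers at a level $\rho_2$ with $\mathfrak G(\rho_2)>-K$ must have $\E_{y_n}(\tau_n)$ bounded away from zero --- is the right idea, and you correctly identify it as the crux. However, the final step of your sketch has a gap: Wald's identity gives control of $\E_y[X_\tau-y]$ in terms of $\E_y(\tau)$, but to pass from this to control of $A(y,\tau)\le\E_y[\gamma(X_\tau)-\gamma(y)]$ you would need a global Lipschitz bound on $\gamma$, which the paper does not assume (Assumption~\ref{bedingungstetig} only requires $\gamma$ nondecreasing and differentiable). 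Without such a bound, one cannot rule out sequences $(y_n,\tau_n)$ with $\E_{y_n}(\tau_n)\to 0$ but $\E_{y_n}[\gamma(X_{\tau_n})-\gamma(y_n)]$ bounded away from zero (think of $\gamma'$ unbounded and $y_n\to\infty$). So your argument for strict decrease is incomplete under the paper's stated hypotheses --- but the paper offers no argument for it at all, so you have at least isolated the genuine issue.
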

\begin{proof} 
	The monotonicity is clear, $\mathfrak G$ is convex as supremum over affine functions, hence also continuous on $ \R\setminus \{\beta\}$. 
\end{proof}
We define \begin{align}
\rho^*:=\sup\{\rho\in \R|\mathfrak G\left(\rho\right)> 0\} \label{rhostardefEQ}
\end{align} and note that due to the monotonicity $$\rho^*=\inf\{\rho\in \R|\mathfrak G\left(\rho\right)< 0\}$$ and if $\rho^* \neq \beta$, $\rho^*$ is the only root of $\mathfrak G$. Now in the following we show that $v\left(y\right)=\rho^*$ for all $y\in \R$.
\begin{thm}\label{stoppcharakterisierung}
	For all $\rho\in \R$ with $\mathfrak G\left(\rho\right) \in \R$ holds $$\mathfrak G\left(\rho\right)>0 \Leftrightarrow \forall x\in \R: v\left(x\right)>\rho.$$
\end{thm}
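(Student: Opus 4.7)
The proof splits into the two implications of the biconditional.

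$(\Rightarrow)$: Suppose $\mathfrak{G}(\rho) > 0$. By definition of $\mathfrak{G}$ as a supremum over $B$, there exist $x_0 \in B$ and $\tau \in \mathcal{T}_{x_0}$ with
\[\E_{x_0}\!\left[\gamma(X_\tau) - \gamma(x_0) - K - \int_0^\tau \left(h(X_s) + \rho\right)\,ds\right] > 0.\]
The plan is to construct a candidate strategy by iterating $\tau$: take $S := R(\tau, x_0) = (\tau_n, x_0)_{n\in \N}$, which lies in $\mathcal{S}_B$ because $x_0 \in B$. A renewal-reward argument, structurally identical to the one carried out in Lemma~\ref{endlichkeitslemma}, yields
\[J_{x_0}(S) = \frac{\E_{x_0}\!\left[\gamma(X_\tau) - \gamma(x_0) - K - \int_0^\tau h(X_s)\,ds\right]}{\E_{x_0}[\tau]} > \rho.\]
For an arbitrary starting point $y \in \R$, I would prepend a single transient epoch bringing the process to $x_0$: shift immediately to $x_0$ at time $0$ if $y \geq x_0$; otherwise wait for $\tau_{x_0}$ (finite in expectation by Lemma~\ref{firstentryfinitenessLevyLEM}) and shift from $X_{\tau_{x_0}}$ to $x_0$. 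The delayed renewal theorem then gives $J_y(S') = J_{x_0}(S) > \rho$, so $v(y) > \rho$ for every $y$.

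$(\Leftarrow)$: By contraposition, I assume $\mathfrak{G}(\rho) \leq 0$ and aim to show $v(x) \leq \rho$ for every $x$. The natural tool is the verification theorem (Theorem~\ref{veridrei}(\ref{item:a})) applied with the candidate
\[g(x) := \sup_{\tau \in \mathcal{T}_x} \E_x\!\left[\gamma(X_\tau) - \int_0^\tau\left(h(X_s) + \rho\right)\,ds\right] = \mathfrak{g}^{\mathcal{T}_x}_\rho(x) + \gamma(x) + K.\]
Condition~(\ref{item:a:c}) on $u(x,y)$ is the cleanest: taking $\tau = 0 \in \mathcal{T}_x$ gives $g(x) \geq \gamma(x)$, while the hypothesis $\mathfrak{g}^{\mathcal{T}_y}_\rho(y) \leq \mathfrak{G}(\rho) \leq 0$ gives $g(y) \leq \gamma(y) + K$ for every $y \in B$. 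Combining these yields
\[u(x, y) = \gamma(x) - \gamma(y) - K - g(x) + g(y) \leq \gamma(x) - \gamma(y) - K - \gamma(x) + (\gamma(y) + K) = 0\]
whenever $y \in B$ and $y \leq x$. The growth condition~(\ref{item:a:b}) can be handled by combining the lower bound $g \geq \gamma$ with the fact that admissible strategies only shift downward, so that $\gamma(X^S_T)$ cannot grow super-linearly in $T$; once this is in place, Theorem~\ref{veridrei}(\ref{item:a}) yields $v(x) \leq \rho$, contradicting $v(x) > \rho$.

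The main obstacle is verifying the supermartingale condition~(\ref{item:a:a}) for $M_t = g(X_t) - \int_0^t\left(h(X_s)+\rho\right)\,ds$, because the admissible class $\mathcal{T}_x$ defining $g(x)$ depends on the starting state and hence the standard dynamic-programming argument does not apply verbatim. I would push it through via a concatenation trick: given any $\sigma \in \mathcal{T}_{X_t}$ on the shifted path, the time $\tilde{\tau} := t + \sigma \circ \theta_t$, further extended by $\tau_x \circ \theta_{\tilde{\tau}}$ on the event $X_{\tilde{\tau}} < x$, lies in $\mathcal{T}_x$, and the correction contributes only an integrable error controlled by Lemma~\ref{firstentryfinitenessLevyLEM} and Assumption~\ref{bedingungstetig}(\ref{hassumption}). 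This delivers the Bellman-type inequality $g(x) \geq \E_x\!\left[g(X_t) - \int_0^t\left(h(X_s)+\rho\right)\,ds\right]$ for every $t \geq 0$, which together with the strong Markov property is equivalent to the supermartingale property of $M$ and thereby closes the contraposition.
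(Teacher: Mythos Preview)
Your $(\Rightarrow)$ direction matches the paper's argument: iterate an $\epsilon$-good $\tau$ via $R(\tau,x_0)$ and invoke the renewal reward theorem. That part is fine.

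Your $(\Leftarrow)$ direction, however, takes a route that does not close. You try to apply the verification Theorem~\ref{veridrei}(\ref{item:a}) with $g(x)=g^{\mathcal T_x}_\rho(x)$, but two of the three hypotheses are not secured.

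\emph{Supermartingale condition~(\ref{item:a:a}).} The state-dependent constraint class $\mathcal T_x$ breaks the dynamic programming inequality. If $X_t<x$ then $\mathcal T_{X_t}\supsetneq\mathcal T_x$, so $g(X_t)$ is a supremum over a strictly larger set than the one you can reach from $x$. Your concatenation trick repairs membership in $\mathcal T_x$ by appending $\tau_x$ after $\tilde\tau$, but this adds the extra cost $\int_{\tilde\tau}^{\tau'}(h+\rho)\,ds$, whose sign is \emph{wrong} for the inequality you need: you obtain $g(x)\geq\E_x[g(X_t)-\int_0^t(h+\rho)\,ds]-\E_x[\int_{\tilde\tau}^{\tau'}(h+\rho)\,ds]$, which is weaker than the Bellman inequality, not equivalent to it. There is no evident way to make this correction vanish.

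\emph{Growth condition~(\ref{item:a:b}).} What is required is a \emph{lower} bound, namely $\limsup_T\E_xg(X^S_T)/T\geq0$, but you argue for an upper bound (``$\gamma(X^S_T)$ cannot grow super-linearly''). Admissible strategies may shift the process arbitrarily far down (e.g.\ $\xi_n=-n^2$), driving $\gamma(X^S_T)$, and hence $g(X^S_T)$, to $-\infty$ faster than linearly, so the condition can genuinely fail.

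The paper avoids both obstacles by not going through Theorem~\ref{veridrei} at all. It proves $(\Leftarrow)$ directly: take any strategy $S$ with $J_x(S)>\rho$, decompose the reward into the periods $[\tau_{n-1},\tau_n)$, condition on $\mathcal F_{\tau_{n-1}}$, and use that on each period the controlled process behaves like an uncontrolled L\'evy process started at $\xi_{n-1}\in B$. Each period's expected contribution is then bounded by $\mathfrak g^{\mathcal T_{\xi_{n-1}}}_\rho(\xi_{n-1})\leq\mathfrak G(\rho)$, and summing gives $0<\frac1T\sum_n\Pro_x(\tau_{n-1}\leq T)\,\mathfrak G(\rho)$, hence $\mathfrak G(\rho)>0$. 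This period-by-period decomposition is the missing idea in your attempt.
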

\begin{proof} If there is $x \in B$ and a stopping time $\tau \in \mathcal T_x$, such that $\E_x\left(\gamma\left(X_\tau\right)\right)= \infty$, then Lemma \ref{endlichkeitslemma} yields the equivalence, therefore in the following we assume no such stopping time exists. 
First, let $\rho \in \R$ such that for all $x\in \R$ holds $v\left(x\right)>\rho$. Then there is an admissible strategy $S=\left(\tau_n,\xi_n\right)\in \mathcal S_B$ such that $$\liminf_{T\rightarrow \infty}\frac{1}{T} \E_x\left\{\sum_{1\leq n:\tau_n\leq T}\left(\gamma\left(X^S_{\tau_n,-}\right)-\gamma\left(\xi_n\right)-K \right)-\int^{T}_0 h\left(X^S_s\right)\ ds\right\}>\rho$$ and due to excluding strategies with an infinite gain in one period or infinite costs in one period, we also have
$$\liminf_{T\rightarrow \infty}\frac{1}{T} \E_x\left\{\sum_{1\leq n:\tau_{n-1}\leq T}\left(\gamma\left(X^S_{\tau_n,-}\right)-\gamma\left(\xi_{n-1}\right)-K -\int^{\tau_n}_{\tau_{n-1}} h\left(X^S_s\right)\ ds\right)\right\}>\rho$$
 because we sum over one more control and hence add one summand with finite expectation. We set $\tau_0:=0$ and $\xi_0:=x$. \\
 This implies that there is $\tilde T >0$ such that for all $T\geq \tilde T$  we have $$\frac{1}{T}\E_x\left\{\sum_{1\leq n:\tau_{n-1}\leq T}\left(\gamma\left(X_{\tau_n,-}\right)-\gamma\left(\xi_{n-1}\right)-K -\int^{\tau_n }_{\tau_{n-1}} h\left(X_s\right)\ ds\right)\right\}>\rho $$ and hence
 $$\frac{1}{T}\E_x\left\{\sum_{1\leq n:\tau_{n-1}\leq T}\left(\gamma\left(X_{\tau_n,-}\right)-\gamma\left(\xi_{n-1}\right)-K -\int^{\tau_n }_{\tau_{n-1}} \left(h\left(X_s\right)+\rho\right) \ ds\right)\right\}>0.$$ 
For all $n \in \N$ set $\tilde \tau^n_i =\begin{cases}
\tau_i; \ \ i\leq n\\ \infty ; \ \ i>n
\end{cases} $ and $\tilde S_n:=\left(\tilde \tau^n_i,  \xi_i\right)_{i \in \N}$. Although this is not an admissible impulse control strategy, we still used the established notations for these strategy.\\ Fix $n \in \N$. The process $Y$ given by $Y_t:=X^{\tilde S_{n-1}}_{\tau_{n-1}+t}$ is still a Markov process (started in $X^{\tilde S_{n-1}}_{\tau_{n-1}}$) both under its natural filtration $\mathcal F^Y$ and under the filtration $\mathcal{\tilde F}$ given by $\mathcal{\tilde F}_t:=\mathcal F_{\tau_{n-1} +t}$. It is well established that in optimal stopping problems for Markov processes the value of the problem does not change, when one only considers optimization over first entry times, which are in the natural filtration of the process. We define \begin{align*}
\mathcal S_x\left(\mathcal G\right):=\{\tau| \tau\text{ is } \mathcal G \text{ st. time, } Y_\tau\geq x\text{ and } \E_x\left(\int_{0}^{\tau}\left(h\left(Y_t\right)+\rho\right) \ dt\right),\ \E_x\left(\tau\right)<\infty \}
\end{align*} for each $x\in \R$, and each $\mathcal G \in \{\mathcal{\tilde{F}}, \mathcal F^Y \}$. We set 
\begin{align*}
\sigma:=\mathds{1}_{\{ \tau_{n-1}\leq T\}}\left(\tau_n-\tau_{n-1}\right) \in \mathcal S_x\left(\mathcal{\tilde F}\right)
\end{align*} and we have due to the aforementioned reason
\begin{align*}
\E_x&\left\{\mathds{1}_{\{ \tau_{n-1}\leq T\}}\left(\gamma\left(X_{\tau_n,-}\right)-\gamma\left(\xi_{n-1}\right)-K -\int^{\tau_n}_{\tau_{n-1}} h\left(X_s\right)\
 ds\right)\right\}
 \\&=\E_x\left\{\E_{x}\left[\left.\mathds{1}_{\{ \tau_{n-1}\leq T\}}\textcolor{white}{\int_{\tau_{\bar x}}^{T^T}}\right.\right.\right.
 \\&\left.\left.\left.\textcolor{white}{\int_{\tau_{\bar x}}^{T^T}}\left(\gamma\left(X_{\tau_n,-}\right)-\gamma\left(\xi_{n-1}\right)-K -\int^{\tau_n }_{\tau_{n-1} } h\left(X_s\right)\
 ds\right)\right| \mathcal F_{\tau_{n-1}}\right]\right\}
  \\&=\E_x\left\{\mathds{1}_{\{ \tau_{n-1}\leq T\}}\textcolor{white}{\int_{\tau_{\bar x}}^{T^T}}\right.
  \\&\left.\textcolor{white}{\int_{\tau_{\bar x}}^{T^T}}\E_{x}\left[\left.\left(\gamma\left(Y_{{({\tau_n,-})-\tau_{n-1}}}\right)-\gamma\left(Y_0\right)-K -\int^{{({\tau_n,-})-\tau_{n-1}}}_{0} h\left(Y_s\right)\
 ds\right)\right|\mathcal F_{\tau_{n-1}}\right]\right\}
 \\&=\E_x\left\{\mathds{1}_{\{ \tau_{n-1}\leq T\}}\E_{X_{\tau_{n-1}}}\left(\gamma\left(Y_\sigma\right)-\gamma\left(Y_0\right)-K -\int^{\sigma }_{0} h\left(Y_s\right)\
 ds\right)\right\}
 \\&\leq \E_x\left\{\mathds{1}_{\{ \tau_{n-1}\leq T\}}\textcolor{white}{\int_{\tau_{\bar x}}^{T^T}}\right.
 \\&\left.\textcolor{white}{\int_{\tau_{\bar x}}^{T^T}}\sup_{\tau \in \mathcal \mathcal S_{X_{\tau_{n-1}}}\left(\mathcal{\tilde F}\right) }\E_{X_{\tau_{n-1}}}\left(\gamma\left(Y_\tau\right)-\gamma\left(Y_0\right)-K-\int_{0}^{\tau}\left(h\left(Y_t\right)+\rho\right) \ dt \right)\right\}
 \\&= \E_x\left\{\mathds{1}_{\{ \tau_{n-1}\leq T\}}\textcolor{white}{\int_{\tau_{\bar x}}^{T^T}}\right.
 \\&\left.\textcolor{white}{\int_{\tau_{\bar x}}^{T^T}}\sup_{\tau \in \mathcal S_{X_{\tau_{n-1}}}\left(\mathcal{ F}^Y\right) }\E_{X_{\tau_{n-1}}}\left(\gamma\left(Y_\tau\right)-\gamma\left(Y_0\right)-K-\int_{0}^{\tau}\left(h\left(Y_t\right)+\rho\right) \ dt \right)\right\}
  \\&\leq \E_x\left(\mathds{1}_{\{ \tau_{n-1}\leq T\}}\mathfrak{G}\left(\rho\right) \right)
\\&= \Pro_x\left( {\tau_{n-1}\leq T} \right)\mathfrak{G}\left(\rho\right),
\end{align*}

hence for all $T\geq \tilde T$\begin{align*}0<&\frac{1}{T}\E_x\left\{\sum_{1\leq n:\tau_{n-1}\leq T}\left(\gamma\left(X_{\tau_n,-}\right)-\gamma\left(\xi_{n-1}\right)-K -\int^{\tau_n }_{\tau_{n-1}} h\left(X_s\right)+\rho\ ds\right)\right\}\\=&\frac{1}{T}\sum\limits_{n \in \N}\Pro_{x}\left(\tau_{n-1}<T\right) \mathfrak G \left(\rho\right) \end{align*}  
and we get $$0< \mathfrak G\left(\rho\right).$$
Now we show the reverse implication. \\
Let $\rho \in \R$ such that $\mathfrak G\left(\rho\right)>0$. Then there is $y\in B$ and $\tau \in \mathcal T_y$ with
\begin{align*}
\E_y\left(\gamma\left(X_\tau\right)-\gamma\left(y\right)-K-\int_0^\tau \left(h\left(X_s\right)+\rho\right) \ ds\right)>0.\tag*{(*)}\label{asd}
\end{align*}
	We set $$S^\uparrow:=\left(\tau_n,y\right)_{n \in \N}:=R\left(\tau,y\right),$$ 
and we define $$R_i:=\gamma\left(X_{\tau_i}\right)-\gamma\left(y\right)-K-\int_{\tau_{i-1}}^{\tau_i} \left(h\left(X_s\right)\right) \ ds .$$
Then $$\E_y\left(R_i\right)>\rho  \E_y\left(\tau\right)$$ and Lemma \ref{rere} yields
\begin{align*}
\liminf_{T\rightarrow \infty}&\frac{1}{T} \E_x\left\{\sum_{1\leq n:\tau_n\leq T}\left(\gamma\left(X^{S^\uparrow}_{\tau_n,-}\right)-\gamma\left(\xi_n\right)-K \right)-\int^{T}_0 h\left(X^{S^\uparrow}_s\right)\ ds\right\}\\&=
\lim_{T\rightarrow \infty} \frac{\E_y\left(\sum_{\tau_n \leq T}R_i\right)}{T}\\&= \frac{\E_y\left(R_1\right)}{\E_y\left(\tau\right)}\\& >\rho.
\end{align*}

\end{proof}
\begin{corollary}\label{vgleichrhosterncorrolar}
The value function $v$ defined in \eqref{vdefEQ} is constant and it holds	$v=\rho^*$, where $\rho^*$ is defined in \eqref{rhostardefEQ}.
\end{corollary}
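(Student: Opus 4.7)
The plan is to combine Theorem \ref{stoppcharakterisierung} with a constancy argument driven by the L\'evy homogeneity and the long-term-average nature of the functional. Concretely, I would prove $v(x)\ge \rho^*$ for all $x$ first, then show $v$ is constant, and finally upgrade the first inequality to equality.

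For the lower bound: pick any $\rho<\rho^*$. The monotonicity and continuity of $\mathfrak G$ on $\R\setminus\{\beta\}$ together with the definition \eqref{rhostardefEQ} give $\mathfrak G(\rho)>0$ and $\mathfrak G(\rho)\in\R$, so Theorem \ref{stoppcharakterisierung} yields $v(x)>\rho$ for every $x\in\R$. Letting $\rho\uparrow\rho^*$ produces $v(x)\ge \rho^*$ for all $x$.

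For constancy, fix a reference point $x_0\in B$ (assuming $B\ne\emptyset$, as otherwise the problem is vacuous). Given any $y\in\R$ and any $S=(\tau_n,\xi_n)_{n\in\N}\in\mathcal S_B$ starting the process at $x_0$, I would build an admissible strategy $\tilde S$ starting at $y$ by prepending one control: wait until $\tau_{x_0}$ (which is $0$ if $y\ge x_0$ and has finite expectation by Lemma \ref{firstentryfinitenessLevyLEM} otherwise), shift to $x_0$, then apply $S$ in the time-shifted process. The prepended piece contributes a finite additive amount to the sum of jump-payoffs (one extra summand with integrable $\gamma$-increments) and to the integral of $h$ (by Assumption \ref{bedingungstetig}.\ref{hassumption}); dividing by $T$ and taking $\liminf$, these contributions vanish exactly as in the renewal-theoretic step of Lemma \ref{endlichkeitslemma}, giving $J_y(\tilde S)=J_{x_0}(S)$ and hence $v(y)\ge v(x_0)$. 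The reverse direction uses the strong Markov property: any strategy starting at $y\notin B$ must shift for the first time to some $\xi_1\in B$, and the contribution from $\tau_1$ onwards behaves like a strategy starting at $\xi_1\in B$, so $J_y(S)\le \sup_{b\in B}v(b)$. Combining, $v$ is constant on $\R$, and this common value equals $\sup_{b\in B}v(b)$.

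To finish, let $c$ denote this constant value. Paragraph one gave $c\ge\rho^*$. Assume for contradiction $c>\rho^*$ and pick $\rho\in(\rho^*,c)$; then $v(x)=c>\rho$ for every $x$, so Theorem \ref{stoppcharakterisierung} forces $\mathfrak G(\rho)>0$, contradicting $\rho>\rho^*=\sup\{\rho:\mathfrak G(\rho)>0\}$. Hence $c=\rho^*$, proving the corollary. The degenerate case $\rho^*=\beta=\infty$ is handled separately by Lemma \ref{endlichkeitslemma}, which directly gives $v\equiv\infty=\rho^*$. The main obstacle is the constancy step: intuitively obvious from the fact that $\liminf$-normalised quantities ignore finite initial segments, it still requires careful bookkeeping to verify that the prepended control's contribution is genuinely finite in expectation and that admissibility is preserved, which is where $\E(X_1)>0$ and the integrability in Assumption \ref{bedingungstetig} are essential.
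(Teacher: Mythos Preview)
Your argument is correct, but it does considerably more work than the paper intends. The paper offers no separate proof at all: the statement is simply recorded as an immediate corollary of Theorem~\ref{stoppcharakterisierung}. The reason it \emph{is} immediate is that the proof of Theorem~\ref{stoppcharakterisierung} actually establishes something stronger than its statement. Inspect the first implication there: although the hypothesis is phrased as ``$v(x)>\rho$ for all $x$'', the argument fixes a single $x$, picks one strategy $S$ with $J_x(S)>\rho$, and derives $\mathfrak G(\rho)>0$ from that alone. So the proof really gives
\[
\bigl(\exists\,x:\ v(x)>\rho\bigr)\ \Longrightarrow\ \mathfrak G(\rho)>0\ \Longrightarrow\ \bigl(\forall\,x:\ v(x)>\rho\bigr),
\]
the second arrow being the reverse implication. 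This chain forces $\inf_x v(x)=\sup_x v(x)$, hence constancy, and identifies the common value with $\sup\{\rho:\mathfrak G(\rho)>0\}=\rho^*$ in one stroke.

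Your route---proving $v\ge\rho^*$, then constancy via a prepend-one-control argument, then the matching upper bound by contradiction---arrives at the same place and is perfectly sound; the prepending step is essentially the delayed-renewal observation already used at the end of Lemma~\ref{endlichkeitslemma}. What you gain is a self-contained argument that does not rely on re-reading the proof of Theorem~\ref{stoppcharakterisierung}; what you lose is economy, since the constancy you work to establish is already buried in that proof. Either way the logical content is the same.
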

\begin{corollary}\label{optimalitycorro}
	If $\mathfrak G\left(\rho^*\right)=0$ and there are $y\in B$ and $\tau\in \mathcal T_y$ such that  $$\mathfrak G\left(\rho^*\right)=\E_y\left(\gamma\left(X_\tau\right)-\int_{0}^{\tau}\left(h\left(X_t\right)+\rho^*\right) \ dt \right)-\gamma\left(y\right)-K,$$ then the strategy $R\left(\tau,y\right)$ is optimal for $v$. 
\end{corollary}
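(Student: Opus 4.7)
The plan is to reduce the corollary to the renewal-reward computation already carried out at the end of the proof of Theorem \ref{stoppcharakterisierung}. Combined with Corollary \ref{vgleichrhosterncorrolar}, which tells us that $v \equiv \rho^*$, it suffices to show that the strategy $S^\uparrow := R(\tau,y)$ achieves $J_x(S^\uparrow) \geq \rho^*$ for every $x \in \R$, since the reverse inequality is automatic.

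First I would rewrite the hypothesis in renewal-reward form. The assumed equality
\[
0 = \E_y\left(\gamma(X_\tau) - \int_0^\tau (h(X_t)+\rho^*)\,dt\right) - \gamma(y) - K
\]
rearranges, using $\E_y(\tau)<\infty$ from $\tau \in \mathcal T_y$, to
\[
\E_y\left(\gamma(X_\tau) - \gamma(y) - K - \int_0^\tau h(X_s)\,ds\right) = \rho^* \, \E_y(\tau).
\]
Thus, with the notation $S^\uparrow = (\tau_n, y)_{n\in \N}$ and $R_i := \gamma(X_{\tau_i}) - \gamma(y) - K - \int_{\tau_{i-1}}^{\tau_i} h(X_s^{S^\uparrow})\,ds$ from the last part of the proof of Theorem \ref{stoppcharakterisierung}, we have under $\Pro_y$ that $(\Delta \tau_i, R_i)_{i\in \N}$ is a sequence of i.i.d.\ random vectors with $\E_y(R_1)/\E_y(\tau) = \rho^*$.

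Next I would plug this into the exact same renewal computation used there: an application of Lemma \ref{rere} to $(\Delta\tau_i, R_i)$ — with the standard truncation trick $R_i \wedge a$ if integrability of $R_1$ is only one-sided — together with the matching treatment of the running cost term via $Q_i := \int_{\tau_{i-1}}^{\tau_i} h(X^{S^\uparrow}_s)\,ds$, yields
\[
J_y(S^\uparrow) = \lim_{T\to\infty} \frac{1}{T}\E_y\!\left(\sum_{i=1}^{N(T)} R_i\right) = \frac{\E_y(R_1)}{\E_y(\tau)} = \rho^*,
\]
where $N(T) := \sup\{n:\tau_n\leq T\}$. Combined with $v(y)=\rho^*$ from Corollary~\ref{vgleichrhosterncorrolar}, this shows $S^\uparrow$ is optimal at $x = y$.

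The only remaining point is extending optimality to arbitrary starting points $x \neq y$. I would handle this exactly as at the end of the proof of Lemma \ref{endlichkeitslemma}: prepend to $S^\uparrow$ a single initial control that shifts the process to $y$ the first time $X$ exceeds $y$ (or at time $0$ if $x \leq y$, using an auxiliary hitting time if $x > y$). Assumption \ref{bedingungstetig}.\ref{hassumption} guarantees admissibility, and the renewal process becomes a delayed renewal process, for which the elementary renewal theorem still produces the same limit $\rho^*$. The main (mild) obstacle is this bookkeeping at the starting point, together with ensuring that the truncation step in the renewal-reward argument does not cost anything, which is exactly what the monotone convergence step in Lemma \ref{endlichkeitslemma} handles.
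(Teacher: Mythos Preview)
Your proposal is correct and follows essentially the same route as the paper: both reduce to the renewal-reward computation already performed in the reverse implication of Theorem~\ref{stoppcharakterisierung}. The only cosmetic difference is that the paper avoids rerunning that computation with equality: it observes that for every $\rho<\rho^*$ the same pair $(y,\tau)$ satisfies the strict inequality \ref{asd} (the left-hand side increases by $(\rho^*-\rho)\E_y(\tau)>0$), so the already-proved direction gives $J_y(R(\tau,y))>\rho$, and letting $\rho\nearrow\rho^*$ yields $J_y(R(\tau,y))\geq\rho^*=v$. Your direct application of Lemma~\ref{rere} with equality is equally valid (and in fact the truncation caveat is unnecessary here, since the hypothesis together with $\tau\in\mathcal T_y$ forces $\E_y(\gamma(X_\tau))<\infty$, hence $R_1$ is fully integrable). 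Your explicit treatment of arbitrary starting points via delayed renewal is more detailed than the paper's proof, which only addresses $x=y$.
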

\begin{proof}
For each $\rho < \rho^*$ holds $\mathfrak{G}\left(\rho\right)>0$ and $y$ and $\tau$ fulfil $\ref{asd}$ in the previous proof of Theorem \ref{stoppcharakterisierung}. Hence the calculations therein show that  $J_y\left(R\left(\tau,y\right)\right)>\rho$ for all $\rho<\rho^*$, hence $J_y\left(R\left(\tau,y\right)\right)\geq \rho^*=v$, which means that $\left(R\left(\tau,y\right)\right)$ is an optimizer for $v$. 
\end{proof}
\section{Direct Solution of the Stopping Problem}\label{sec:OSP}

Corollary \ref{optimalitycorro} stresses the importance of finding an optimal strategy for stopping problems of the form 
$$
g^{\mathcal T_x}_\rho\left(x\right):=\sup_{\tau \in \mathcal T_x}\E_x\left(\gamma\left(X_\tau\right)-\int_{0}^{\tau}\left(h\left(X_t\right)+\rho\right) \ dt \right) 
$$where $ x\in B$, ideally optimizers of a simple form. In the following we fix $\rho \in \R$ and establish sufficient conditions for a threshold time $\tau_a$ to be an optimizer for $g^{\mathcal T_{x}}\left(x\right)$ and partition the state space $\R$ in a set where this threshold time is the optimal one and a set where immediate stopping is optimal. 
 The main tool to characterize the payoff functions, or more precisely the pairs $\gamma,h$ of payoff function and running costs, for this to hold, is a representation of $\gamma$ in terms of expected running suprema. Since in contrast to other problems, like e.g. \cite{mordeckisalminen} \cite{novikovshiryaevstopping07}, \cite{surya07} and \cite{CST13}, where different forms of maximum representations are used, here we cannot rely on the resolvent or e.g. Green kernels, since there is no discounting or killing in our problem. To circumvent this obstacle, we first fix $\bar y \in \R$ and develop the maximum representation, find a solution, etc. only on $\left(-\infty, \bar y\right]$ and later on show that our obtained optimizers in fact don't depend on the particular choice of $\bar y$ if $\bar y$ is chosen large enough.\\
 First aim of this section is to establish the needed framework to state the aforementioned relationship of $\gamma$ and $h$. 
  The second part of this section is devoted to establish the maximum representation. In the third part of the section this is used to prove optimality of a threshold time and also to characterize the threshold. \\
\begin{assumption}\label{maxdst} We assume that there is a function $f$ such that: \begin{enumerate}

\item For all $x  \leq {\bar y}$ $$
		\gamma\left(x\right)=\E_{x}\left[\int_{0}^{\tau_{\bar y}}f\left(\sup_{ r\leq t} X_r\right)dt\right]+\E_{x}\left[\gamma\left(X_{\tau_{{\bar y}}}\right)-\int_{{ 0}}^{\tau_{\bar y}} h\left(X_s\right) ds\right].$$
\item The function $f$ has a unique minimum $a \in \R$, is strictly decreasing on $\left(-\infty, a\right]$ and strictly increasing on $[a,\infty)$. 
			\end{enumerate}
\end{assumption}

We remark that $\left(\sup_{ r\leq t }X_r\right)_{t\geq 0}$ is no Markov process, but the two dimensional process $\left(X_t,\sup_{ r\leq t }X_r\right)_{t\geq 0}$ is. Hence whenever the running supremum occurs together with the measure $ \Pro_x$ or the corresponding expectation operator we tacitly mean the measure $\Pro_{\left(x,x\right)}$, the measure corresponding to the two dimensional Markov process $\left(X_t,\sup_{ r\leq t }X_r\right)_{t\geq 0}$ started in $\left(x,x\right)$. 
 \subsection{Solution of stopping problem}
We introduce the notation $$f_\rho:= f+ \rho$$ for each $\rho \in \R$ and for this section fix $\rho \in \R$ such that: 
\begin{assumption}\label{fassumption}
 The function $f_\rho$ has two roots $\underline{x} < \overline x$ and is negative on $\left(\underline x, \overline x\right)$. 
\end{assumption} 
Further, we restrict ourselves to the case of arbitrary downshifts allowed. Hereby we remark that the case of just one possible restarting point as it is assumed in e.g. forest management (see \cite{AL}) is also covered by our line of argument with minor adjustments. Our techniques even work for many more sets of $B$, nevertheless, the additional technical complexity would blur the underlying idea more than we would benefit from the greater generality. 
\begin{assumption}
From now on we assume $B=\R$.
\end{assumption}
\begin{defn}
	Call a stopping time $\tau$ upper regular if there is $z \in \R$ such that $\tau$ is under all $\Pro_x$ a.s. bounded by the first entry time of $X$ in $[z,\infty)$. For all $x\in \R$ define $\mathcal U_x:=\{\tau \in \mathcal T_x|\tau \text{ is upper regular}\}$. 
\end{defn}
\begin{lemma}\label{regularlemma}
	For all $x\in \R$ we have $$g^{\mathcal T_x}_\rho\left(x\right)=\sup_{\tau \in \mathcal U_x}\E_x\left(\gamma\left(X_\tau\right)-\int_0^\tau (h\left(X_s\right) +\rho) \ ds\right).$$
\end{lemma}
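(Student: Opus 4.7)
The inclusion $\mathcal U_x\subset\mathcal T_x$ makes the direction ``$\geq$'' immediate, so the work is in the reverse inequality. The plan is, given any $\tau\in\mathcal T_x$, to approximate it by the upper regular stopping times $\tau^z:=\tau\wedge\tau_z$ for $z>x$, and to show that the reward functional is lower semicontinuous along this sequence as $z\to\infty$. Taking the supremum over $\tau\in\mathcal T_x$ on the resulting inequality then yields the claim.

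First I would verify $\tau^z\in\mathcal U_x$. Upper regularity is automatic from $\tau^z\leq\tau_z$. The localisation $X_{\tau^z}\geq x$ holds because on $\{\tau\leq\tau_z\}$ one has $X_{\tau^z}=X_\tau\geq x$ by $\tau\in\mathcal T_x$, while on $\{\tau>\tau_z\}$ one has $X_{\tau^z}=X_{\tau_z}\geq z>x$. The integrability conditions $\E_x(\tau^z)<\infty$ and $\E_x\int_0^{\tau^z} h(X_s)\,ds<\infty$ follow from $\tau^z\leq\tau$ together with $h\geq 0$. Next, I would pass to the limit $z\to\infty$. Since $\E(X_1)>0$ the process $X$ drifts to $+\infty$ a.s., so $\tau_z$ is finite for each $z$ and $\tau_z\nearrow\infty$ a.s.; combined with $\tau<\infty$ a.s. (from $\E_x\tau<\infty$) this gives $\tau^z\nearrow\tau$ a.s., and in particular $\tau^z(\omega)=\tau(\omega)$ eventually for almost every $\omega$. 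Monotone convergence then yields $\E_x\int_0^{\tau^z}h(X_s)\,ds\to\E_x\int_0^\tau h(X_s)\,ds$, and dominated convergence (with dominant $\tau$) yields $\E_x(\rho\tau^z)\to\E_x(\rho\tau)$.

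The main obstacle is the convergence of the $\gamma$-term, because $\gamma$ is not assumed bounded and the jumps of $X$ prevent a direct dominated-convergence argument on $X_{\tau^z}$. I would resolve this by two observations. First, since $\tau^z(\omega)=\tau(\omega)$ eventually, one has $X_{\tau^z}(\omega)=X_\tau(\omega)$ eventually and hence $\gamma(X_{\tau^z})\to\gamma(X_\tau)$ a.s. Second, monotonicity of $\gamma$ (Assumption~\ref{bedingungstetig}) together with $X_{\tau^z}\geq x$ gives $\gamma(X_{\tau^z})-\gamma(x)\geq 0$, so Fatou's lemma applied to the nonnegative random variables $\gamma(X_{\tau^z})-\gamma(x)$ yields $\liminf_{z\to\infty}\E_x\gamma(X_{\tau^z})\geq \E_x\gamma(X_\tau)$; this covers both the non-degenerate case and the case $\E_x\gamma(X_\tau)=+\infty$. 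Combining the three limit assertions gives
$$\liminf_{z\to\infty}\E_x\!\left[\gamma(X_{\tau^z})-\int_0^{\tau^z}(h(X_s)+\rho)\,ds\right]\geq \E_x\!\left[\gamma(X_\tau)-\int_0^\tau(h(X_s)+\rho)\,ds\right],$$
so $\sup_{\tau'\in\mathcal U_x}\E_x[\gamma(X_{\tau'})-\int_0^{\tau'}(h(X_s)+\rho)\,ds]\geq \E_x[\gamma(X_\tau)-\int_0^\tau(h(X_s)+\rho)\,ds]$ for each $\tau\in\mathcal T_x$, and taking the supremum over $\tau\in\mathcal T_x$ on the right closes the argument.
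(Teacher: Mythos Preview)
Your argument is correct and essentially identical to the paper's own proof: both approximate an arbitrary $\tau\in\mathcal T_x$ by $\tau\wedge\tau_z$, use (dominated/monotone) convergence for the running-cost and time terms, and handle the $\gamma$-term via Fatou's lemma using the lower bound $\gamma(X_{\tau\wedge\tau_z})\geq\gamma(x)$. Your treatment of the $\gamma$-term is in fact slightly more direct than the paper's (which routes through $\gamma(X_\tau)\wedge\gamma(X_{\sigma_n})$), but the idea is the same.
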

\begin{proof}
	Let $\epsilon >0$. Let $x\in \R$. Take $\tau \in \mathcal T_x$ that is $\epsilon$-optimal. Set for all $n \in \N$ \[\sigma_n:=\tau \wedge \inf\{t\geq 0\mid X_t \geq n+x\}.   \] 
	We have $\sigma_n\in \mathcal U_x$ and $\sigma_n\rightarrow \tau$ a.s. under $\Pro_x$ and since $ \int_0^\tau h\left(X_s\right)  \ ds$ works as an integrable majorant, we get with dominated convergence \[
	\E_x\left(\int_0^{\tau} h\left(X_s\right) \ ds\right)=\lim_{n\rightarrow \infty}\E_x\left(\int_0^{\sigma_n}h\left(X_s\right)  \  ds\right)
	\]
	and by monotone convergence
	\[\lim_{n\rightarrow \infty}\E_x\sigma_n=\E_x\tau.\]
	Note that due to $\tau, \sigma_n \in \mathcal T_x$ for all $n \in \N$ we have for all $ n \in \N$ that $$\gamma\left(X_\tau\right)\wedge \gamma\left(X_{\sigma_n}\right)\geq \gamma(x).$$ Thus Fatou's Lemma yields \begin{align*}
	\E_x \left(\gamma\left(X_\tau\right)\right)&=\E_x\left(\lim_{n\rightarrow \infty}\gamma\left(X_\tau\right)\wedge \gamma\left(X_{\sigma_n}\right)\right)\\&\leq \liminf_{n\rightarrow \infty}\E_x\left(\gamma\left(X_\tau\right)\wedge \gamma\left(X_{\sigma_n}\right)\right)\\&\leq\liminf_{n\rightarrow \infty}\E_x\left( \gamma\left(X_{\sigma_n}\right)\right).
	\end{align*}
	Altogether we get
	\[
	\E_x\left(\gamma\left(X_\tau\right)-\int_0^{\tau}(h\left(X_s\right) +\rho) \ ds\right)\leq\liminf_{n\rightarrow \infty}\E_x\left(\gamma\left(X_{\sigma_n}\right)-\int_0^{\sigma_n} (h\left(X_s\right) +\rho)\ ds\right).
	\]
\end{proof}

\begin{lemma}\label{39}
	Let $x,\bar y \in \R$ with $\underline x \leq x\leq \bar y$. For $a_{\bar y}:=\bar y \wedge \bar x$ holds
	$$\sup_{  \tau \leq  \tau_{\bar y}}\E_{x}\left[\int_{0}^{\tau}(-f_\rho)\left(\sup_{ r\leq t} X_r\right)dt\right]=\E_{x}\left[\int_{0}^{\tau_{a_{\bar y}}}(-f_\rho)\left(\sup_{r \leq t} X_r\right)dt\right].$$ 
\end{lemma}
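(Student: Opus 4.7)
The strategy is entirely pathwise: I exploit the sign structure of $-f_\rho$ together with the monotonicity of the running supremum. By Assumption \ref{fassumption}, $f_\rho<0$ on $(\underline x,\overline x)$ and $f_\rho\ge 0$ outside. Since $t\mapsto \sup_{r\le t}X_r$ is non-decreasing and starts at $x\ge \underline x$, the running supremum can only leave the non-negativity region $[\underline x,\overline x]$ from above. Consequently, pathwise,
\begin{align*}
(-f_\rho)\!\left(\sup_{r\le t}X_r\right) \ge 0 \quad \text{for } t<\tau_{\overline x}, \qquad (-f_\rho)\!\left(\sup_{r\le t}X_r\right) \le 0 \quad \text{for } t\ge \tau_{\overline x},
\end{align*}
and moreover $(-f_\rho)(\sup_{r\le t}X_r)\ge 0$ for all $t<\tau_{a_{\bar y}}$, since then $\sup_{r\le t}X_r<a_{\bar y}\le \overline x$.

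Fix a stopping time $\tau\le \tau_{\bar y}$. I would split into the two cases of the minimum defining $a_{\bar y}$. If $\bar y\le \overline x$, then $a_{\bar y}=\bar y$ and the integrand is non-negative on the whole interval $[0,\tau_{\bar y})$, so extending $\tau$ up to $\tau_{\bar y}=\tau_{a_{\bar y}}$ only increases the integral pathwise. If $\bar y>\overline x$, then $a_{\bar y}=\overline x<\bar y$ and I compare with $\tau_{\overline x}$ by considering the two sub-cases: on $\{\tau\ge \tau_{\overline x}\}$ one has
\[
\int_0^{\tau}(-f_\rho)\!\left(\sup_{r\le t}X_r\right)dt - \int_0^{\tau_{\overline x}}(-f_\rho)\!\left(\sup_{r\le t}X_r\right)dt \;=\; \int_{\tau_{\overline x}}^{\tau}(-f_\rho)\!\left(\sup_{r\le t}X_r\right)dt\;\le\;0,
\]
while on $\{\tau<\tau_{\overline x}\}$ the same difference equals $-\int_{\tau}^{\tau_{\overline x}}(-f_\rho)(\sup_{r\le t}X_r)\,dt\le 0$. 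In either sub-case the pathwise comparison $\int_0^\tau\le \int_0^{\tau_{a_{\bar y}}}$ holds.

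Taking expectations under $\Pro_x$ yields the inequality ``$\le$''. For the reverse inequality it suffices to check that $\tau_{a_{\bar y}}$ is an admissible candidate in the supremum, which is immediate since $a_{\bar y}\le \bar y$ implies $\tau_{a_{\bar y}}\le \tau_{\bar y}$. I do not expect any real obstacle here; the only mild technicality is to verify that the inequalities involving the running supremum remain valid at the single instants $t=\tau_{\overline x}$ and $t=\tau_{\bar y}$ at which the supremum can jump, but since these are null sets with respect to Lebesgue measure in $dt$ they do not affect the integrals. The whole argument thus reduces to the sign analysis of $-f_\rho$ on the range of the running supremum combined with the monotonicity of that supremum.
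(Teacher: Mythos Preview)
Your argument is correct and is precisely the detailed version of what the paper sketches in one line: the unimodal shape of $f_\rho$ from Assumptions \ref{maxdst} and \ref{fassumption} together with the monotonicity of $t\mapsto\sup_{r\le t}X_r$ forces the pathwise integrand to be non-negative before $\tau_{\overline x}$ and non-positive afterwards, so truncating or extending any $\tau\le\tau_{\bar y}$ to $\tau_{a_{\bar y}}$ can only increase the integral. Your handling of the endpoint instants as Lebesgue-null sets and the admissibility of $\tau_{a_{\bar y}}$ are both fine; there is nothing to add.
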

\begin{proof}
This is a direct consequence of the properties of $f$ posed upon it in Assumption \ref{maxdst}, Assumption \ref{fassumption} and the monotonicity of $\sup_{r \leq t}X_r$. 
\end{proof}

%
In the following we use the notation $
f^-(x):=-\min\{f(x),0\}
$ for all $x\in \R$. 
\begin{lemma}\label{bestex}
	There is $x^*\in [\underline x, \bar x] $ such that for all $x \in \R$ \[
	\E_{x}\left[\int^{\tau_{\bar x}}_0  f_\rho^-\left(\sup_{ r\leq t }X_r\right)dt \right]\leq \E_{x^*}\left[\int^{\tau_{\bar x}}_0  f_\rho^-\left(\sup_{ r\leq t }X_r\right)dt \right]. \]
\end{lemma}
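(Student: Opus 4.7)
Define $\Phi(x) := \E_x\left[\int_0^{\tau_{\overline x}} f_\rho^-\!\left(\sup_{r \leq t} X_r\right) dt\right]$ for $x \in \R$. My plan has two parts: first reduce the supremum of $\Phi$ over $\R$ to one over the compact interval $[\underline x, \overline x]$, then establish that this supremum is attained by a continuity argument.

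For the reduction I exploit that, by Assumption \ref{fassumption}, $f_\rho^- \equiv 0$ outside $[\underline x, \overline x]$. For $x \geq \overline x$ one has $\tau_{\overline x} = 0$ under $\Pro_x$, hence $\Phi(x) = 0$. For $x < \underline x$ the integrand vanishes on $[0, \tau_{\underline x})$ since the running supremum stays below $\underline x$ there. Observing that $\sup_{r \leq \tau_{\underline x}} X_r = X_{\tau_{\underline x}}$ (because $X_r < \underline x \leq X_{\tau_{\underline x}}$ for $r < \tau_{\underline x}$), one gets $\sup_{r \leq \tau_{\underline x}+s} X_r = \sup_{r \leq s} X_{\tau_{\underline x}+r}$, and the strong Markov property at $\tau_{\underline x}$ yields $\Phi(x) = \E_x\left[\Phi(X_{\tau_{\underline x}})\right]$. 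Since $X_{\tau_{\underline x}} \geq \underline x$ and $\Phi \equiv 0$ on $[\overline x, \infty)$, this gives $\Phi(x) \leq \sup_{y \in [\underline x, \overline x]} \Phi(y)$, so $\sup_{\R} \Phi = \sup_{[\underline x, \overline x]} \Phi$.

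For the second part I argue that $\Phi$ is continuous on $[\underline x, \overline x]$. The integrand is uniformly bounded on $[\underline x, \overline x]$ by $M := \sup_{[\underline x, \overline x]} f_\rho^- < \infty$, while $\E_y(\tau_{\overline x})$ is continuous and uniformly bounded in $y \in [\underline x, \overline x]$ by Lemmas \ref{firstentryfinitenessLevyLEM} and \ref{xicontinuousLevyLEM}. Shifting to $Y_r := X_r - y$, a L\'evy process started at $0$ under $\Pro_y$, one rewrites $\Phi(y) = \E\left[\int_0^{\tau_{\overline x - y}(Y)} f_\rho^-\!\left(y + \sup_{r \leq t} Y_r\right) dt\right]$. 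For $y_n \to y$ within $[\underline x, \overline x]$ the first-passage times $\tau_{\overline x - y_n}(Y)$ converge a.s.\ to $\tau_{\overline x - y}(Y)$ by Lemma \ref{tauoffen=tauabgeschl levy}, and dominated convergence with the integrable majorant $M \cdot \tau_{\overline x - \underline x}(Y)$ delivers $\Phi(y_n) \to \Phi(y)$. Compactness then yields the desired maximizer $x^* \in [\underline x, \overline x]$.

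The step I expect to be the main obstacle is verifying this continuity: $f$ is only required to be strictly monotone on either side of $a$, so $f_\rho^-$ could in principle jump, and Lemma \ref{tauoffen=tauabgeschl levy} requires that $X$ is not a compound Poisson process with atoms in its L\'evy measure. Should continuity fail in a degenerate case, a fallback via upper semicontinuity (Fatou applied to the non-negative bounded integrand) still suffices to guarantee attainment of the supremum on the compact set $[\underline x, \overline x]$.
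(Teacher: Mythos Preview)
Your proposal is correct and follows essentially the same two-step strategy as the paper: reduce the supremum to the compact interval $[\underline x,\overline x]$ via the strong Markov property at $\tau_{\underline x}$ (using that $f_\rho^-$ vanishes outside $[\underline x,\overline x]$), then invoke continuity plus compactness for attainment. The only notable difference is in how continuity is obtained: the paper first passes to the ladder height process via Lemma~\ref{supdarstellung} and then applies Lemma~\ref{XIcontLEMlevy} to the subordinator $H$, whereas you argue directly on $X$ by spatial translation and dominated convergence; both routes rely on the same regularity hypothesis (Lemma~\ref{tauoffen=tauabgeschl levy}) and on continuity of $f_\rho^-$, and your fallback via upper semicontinuity is a useful remark that the paper does not make explicit.
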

\begin{proof}
	Since $[\underline x, \bar x]$ is compact and the function $\R\rightarrow \R; \ x\mapsto \E_{x}\left[\int^{\tau_{\bar x}}_0  f_\rho^-\left(\sup_{ r\leq t }X_r\right)dt \right]$ is continuous due to Lemma \ref{XIcontLEMlevy} (which is applicable because Lemma \ref{supdarstellung} allows us to replace the running maximum with the ladder height process), $$x^*\in \mathrm{argmax}_{x\in [\underline x, \bar x] }	\E_{x}\left[\int^{\tau_{\bar x}}_0  f_\rho^-\left(\sup_{ r\leq t }X_r\right)dt \right]$$ exists. Now for each $x \geq \bar x$ we have 
	\begin{align*}
	\E_{x}\left[\int^{\tau_{\bar x}}_0  f_\rho^-\left(\sup_{ r\leq t }X_r\right)dt\right] = 0
	=\E_{\bar x}\left[\int^{\tau_{\bar x}}_0  f_\rho^-\left(\sup_{ r\leq t }X_r\right)dt\right] .
	\end{align*}
	And for each $x\leq \underline x$ we get 
	\begin{align*}
	\E_{x}&\left[\int^{\tau_{\bar x}}_0  f_\rho^-\left(\sup_{ r\leq t }X_r\right)dt\right]
	\\&= \E_{x}\left[\int^{\tau_{\bar x}}_{\tau_{\underline x}}  f_\rho^-\left(\sup_{ r\leq t }X_r\right)dt\right] 
	\\&= \E_{x}\left[\E_{x}\left[\int^{\tau_{\bar x}}_{0}  f_\rho^-\left(\sup_{ r\leq t }X_r\right)dt\big|\mathcal F_{\tau_{\underline x}}\right] \right]
	\\&= \E_{x}\left[\E_{X_{\tau_{\underline x}}}\left[\int^{\tau_{\bar x}}_{0}  f_\rho^-\left(\sup_{ r\leq t }X_r\right)dt\right] \right]
	\\&\leq  \E_{x}\left[\E_{x^*}\left[\int^{\tau_{\bar x}}_{0}  f_\rho^-\left(\sup_{ r\leq t }X_r\right)dt\right] \right]
	\\&=\E_{x^*}\left[\int^{\tau_{\bar x}}_{0}  f_\rho^-\left(\sup_{ r\leq t }X_r\right)dt\right].
	\end{align*}
\end{proof}

 \begin{thm}\label{stoppinghauptsatz}
	For all $x \in \R$ with $x \geq \underline x$ holds $$ g^{\mathcal T_{x}}_\rho\left(x\right)=\E_x\left(\gamma\left(X_{\tau_{\bar x}}\right)-\int_{0}^{\tau_{\bar x}}\left(h\left(X_s\right)+\rho\right)\ ds\right) $$
	For all $x\in \R$ with $x<\underline x$, there is $x^*\in [\underline x,\overline x]$ such that $$g^{\mathcal T_{x}}_\rho\left(x\right)\leq g^{\mathcal T_{x^*}}_\rho\left(x^*\right).$$ 
\end{thm}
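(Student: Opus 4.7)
The plan is to reduce the stopping problem to the family of hitting times via the maximum representation in Assumption \ref{maxdst}, and then invoke Lemma \ref{39} to single out $\tau_{\bar x}$ as the optimizer.

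First, by Lemma \ref{regularlemma} I may restrict the supremum to upper-regular $\tau \in \mathcal U_x$, so $\tau \leq \tau_z$ a.s.\ for some $z \in \R$; by enlarging $\bar y$ in Assumption \ref{maxdst} if necessary, I assume $\bar y \geq z \vee \bar x$, so that $\tau \leq \tau_{\bar y}$. Combining Assumption \ref{maxdst} applied at $x$ with its restart at $X_\tau$ (via the strong Markov property at $\tau$) and subtracting the two resulting identities yields after rearrangement
\[
J(\tau) = \gamma(x) - \E_x\!\left[\int_0^\tau f_\rho\!\left(\sup_{r\leq t} X_r\right)\,dt\right] - E(\tau),
\]
where the \emph{overshoot correction}
\[
E(\tau) := \E_x\!\left[\int_\tau^{\tau_{\bar y}}\!\left(f\!\left(\sup_{r\leq t} X_r\right) - f\!\left(\sup_{\tau\leq r\leq t} X_r\right)\right)\,dt\right]
\]
arises because the global running supremum keeps memory of the path before $\tau$.

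When $\tau = \tau_a$ is a hitting time with $a \in [x,\bar y]$, one has $X_{\tau_a} = \sup_{r\leq \tau_a} X_r$, so the two running suprema coincide on $[\tau_a,\tau_{\bar y}]$ and $E(\tau_a) = 0$. Hence $J(\tau_a) = \gamma(x) + \E_x[\int_0^{\tau_a}(-f_\rho)(\sup_{r\leq t}X_r)\,dt]$, and Lemma \ref{39} (applied with $a_{\bar y} = \bar x$, valid because $\bar y \geq \bar x$) identifies $a = \bar x$ as the maximizer, giving $J(\tau_{\bar x}) = \gamma(x) + \E_x[\int_0^{\tau_{\bar x}}(-f_\rho)(\sup_{r\leq t}X_r)\,dt]$. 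For general $\tau \in \mathcal U_x$ under the hypothesis $x \geq \underline x$, the running supremum stays in $[\underline x,\infty)$ throughout. The decomposition $\sup_{r\leq t}X_r = \sup_{r\leq \tau}X_r \vee \sup_{\tau\leq r\leq t}X_r$ combined with the U-shape of $f$ from Assumption \ref{maxdst} permits a sign analysis of the integrand of $E(\tau)$; combining this with Lemma \ref{39} applied to $\tau$ gives the bound $J(\tau) \leq \gamma(x) + \E_x[\int_0^{\tau_{\bar x}}(-f_\rho)(\sup_{r\leq t}X_r)\,dt] = J(\tau_{\bar x})$, which is the first assertion.

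For $x < \underline x$, Lemma \ref{bestex} furnishes $x^* \in [\underline x, \bar x]$ maximising $y \mapsto \E_y[\int_0^{\tau_{\bar x}} f_\rho^-(\sup_{r\leq t} X_r)\,dt]$. Using the pointwise inequality $-f_\rho \leq f_\rho^-$, monotonicity of $\gamma$ (so $\gamma(x) \leq \gamma(x^*)$), and the first part of the statement at the starting point $x^*$, one obtains
\[
g^{\mathcal T_x}_\rho(x) \leq \gamma(x) + \E_x\!\left[\int_0^{\tau_{\bar x}} f_\rho^-(\sup_{r\leq t} X_r)\,dt\right] \leq \gamma(x^*) + \E_{x^*}\!\left[\int_0^{\tau_{\bar x}} f_\rho^-(\sup_{r\leq t} X_r)\,dt\right] = g^{\mathcal T_{x^*}}_\rho(x^*),
\]
as required.

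The main obstacle is the sign analysis of $E(\tau)$ for non-hitting $\tau$: while $E(\tau) = 0$ for hitting times, for general $\tau$ the integrand $f(\sup_{r\leq t}X_r) - f(\sup_{\tau\leq r\leq t}X_r)$ can change sign depending on how the pre-$\tau$ supremum sits relative to the unique minimiser $a$ of $f$. The hypothesis $x \geq \underline x$ (or, in the second part, the transfer to the starting point $x^*$ via Lemma \ref{bestex}) is exactly what is needed to control these contributions in combination with Lemma \ref{39} and close the comparison with $J(\tau_{\bar x})$.
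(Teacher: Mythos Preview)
Your decomposition
\[
J(\tau) = \gamma(x) + \E_x\!\left[\int_0^\tau (-f_\rho)\!\left(\sup_{r\leq t} X_r\right)\,dt\right] - E(\tau)
\]
is correct, and so is $E(\tau_a)=0$ for hitting times. The gap is in the step you flag yourself: you propose to bound $J(\tau)$ by combining Lemma~\ref{39} (which controls the first integral) with a ``sign analysis'' showing $E(\tau)\ge 0$. But $E(\tau)\ge 0$ is \emph{false} in general, even when $x\ge\underline x$. The integrand of $E(\tau)$ is $f(G_t)-f(R_t)$ where $G_t\ge R_t\ge X_\tau\ge\underline x$ are the global and restarted suprema. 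Since the minimizer $a$ of $f$ lies strictly inside $(\underline x,\bar x)$, one can have $\underline x\le R_t<G_t\le a$; on that event $f$ is strictly decreasing and the integrand is strictly negative. So the two estimates you plan to add (Lemma~\ref{39} plus $E(\tau)\ge 0$) do not combine to give $J(\tau)\le J(\tau_{\bar x})$, and the same issue propagates into the first inequality of your $x<\underline x$ chain.

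The paper sidesteps the overshoot term by \emph{not} subtracting the representation of $\gamma(x)$ first. From the strong Markov property it keeps the forward integral $\E_x\big[\int_\tau^{\tau_{\bar y}} f_\rho(\sup_{\tau\le r\le t}X_r)\,dt\big]$ and bounds it directly against $\E_x\big[\int_{\tau_{\bar x}}^{\tau_{\bar y}} f_\rho(\sup_{r\le t}X_r)\,dt\big]$ via the case split on $\{\tau\le\tau_{\underline x}\}$ versus $\{\tau>\tau_{\underline x}\}$, using only the sign pattern of $f_\rho$ on the three regions $(-\infty,\underline x]$, $[\underline x,\bar x]$, $[\bar x,\infty)$ together with $X_\tau\ge x$. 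Only after this comparison is the representation of $\gamma(x)$ reintroduced to identify the right-hand side with $J(\tau_{\bar x})$. In effect the paper bounds the \emph{sum} $\big(\text{Lemma~\ref{39} gap}\big)+E(\tau)$ in one stroke, which works, whereas your decomposition tries to bound the two pieces separately, which does not.
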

\begin{proof}
We define for all $x\in \R$ $$\tilde g\left(x\right):=\E_x\left(\gamma\left(X_{\tau_{\bar x}}\right)-\int_{0}^{\tau_{\bar x}}(h\left(X_s\right)-\rho) \ ds\right). $$ One immediately sees that $ g^{\mathcal T_x}_\rho\geq \tilde g(x) \geq \gamma(x)$ for all $x\in \R$.
Let $x \in \R$. Lemma \ref{regularlemma} tells us that it suffices to show $$\tilde g\left(x\right)\geq \sup_{  \tau \in \mathcal U_x} \E_x\left(\gamma\left(X_\tau\right)-\int_0^{\tau}\left( h\left(X_s\right)+\rho \right) \ ds\right)$$ in order to prove $g_\rho^{\mathcal T_x}=\tilde g(x)$.
\\
  Let $\tau\in \mathcal U_x$ be an upper regular stopping time and fix ${\bar y}>x,\overline x$ such that $\tau\leq \tau_{\bar y}$ $\Pro_x$ a.s. Then we have
 
\begin{align*}
 \E_x &\left[\gamma\left(X_\tau\right)-\int_0^{\tau}\left( h\left(X_s\right)+\rho \right) \ d s\right]
 \\\stackrel{\ref{maxdst}}{=}&\E_x\left\{\E_{X_\tau}\left[\int_{0}^{\tau_{\bar y}}f_\rho\left(\sup_{ r\leq t}X_r\right)dt\right]\right.
 \\&\left.+\E_{X_\tau}\left[\gamma\left(X_{\tau_{{\bar y}}}\right)-\int_{{ 0}}^{\tau_{\bar y}} \left( h\left(X_s\right)+\rho \right) \ ds\right] -\left[\int_0^{\tau}\left( h\left(X_s\right)+\rho \right)\ d s\right]\right\}
 \\=&\E_x\left\{\E_{X_\tau}\left[\int_{0}^{\tau_{\bar y}}f_\rho\left(\sup_{ r\leq t}X_r\right)dt\right]\right\}+\E_{x}\left[\gamma\left(X_{\tau_{{\bar y}}}\right)-\int_{0}^{\tau_{\bar y}} \left( h\left(X_s\right)+\rho \right)\ ds \right]
  \\=&\E_x\left\{\E_{x}\left[\int_{\tau}^{\tau_{\bar y}}f_\rho\left(\sup_{ r\leq t}X_r\right)dt|\mathcal F_\tau \right]\right\}+\E_{x}\left[\gamma\left(X_{\tau_{{\bar y}}}\right)-\int_{0}^{\tau_{\bar y}} \left( h\left(X_s\right)+\rho \right)\ ds \right]
  \\=&\E_x\left\{\mathds{1}_{\{\tau \leq \tau_{\underline x}\}}\E_{x}\left[\int_{\tau}^{\tau_{\bar y}}f_\rho\left(\sup_{ r\leq t}X_r\right)dt|\mathcal F_\tau \right]\right\}
  \\&+\E_x\left\{\mathds{1}_{\{\tau > \tau_{\underline x}\}}\E_{x}\left[\int_{\tau}^{\tau_{\bar y}}f_\rho\left(\sup_{ r\leq t}X_r\right)dt|\mathcal F_\tau \right]\right\}+\E_{x}\left[\gamma\left(X_{\tau_{{\bar y}}}\right)-\int_{0}^{\tau_{\bar y}} \left( h\left(X_s\right)+\rho \right)\ ds \right]
   \\=&\E_x\left\{\E_{x}\left[\mathds{1}_{\{\tau \leq \tau_{\underline x}\}}\int_{\tau}^{\tau_{\bar y}}f_\rho\left(\sup_{ r\leq t}X_r\right)dt|\mathcal F_\tau \right]\right\}
  \\&+\E_x\left\{\E_{x}\left[\mathds{1}_{\{\tau > \tau_{\underline x}\}}\int_{\tau}^{\tau_{\bar y}}f_\rho\left(\sup_{ r\leq t}X_r\right)dt \right]|\mathcal F_\tau \right\}
  \\&+\E_{x}\left[\gamma\left(X_{\tau_{{\bar y}}}\right)-\int_{0}^{\tau_{\bar y}} \left( h\left(X_s\right)+\rho \right) \ ds \right]
   \\\stackrel{\ref{maxdst}}{\leq}&\E_{x}\left[\mathds{1}_{\{\tau \leq \tau_{\underline x}\}}\int_{0}^{\tau_{\bar y}}f_\rho\left(\sup_{ r\leq t}X_r\right)dt\right]
  \\&+\E_{x}\left[\mathds{1}_{\{\tau > \tau_{\underline x}\}}\int_{\tau_{\bar x}}^{\tau_{\bar y}}f_\rho\left(\sup_{ r\leq t}X_r\right)dt\right]\\&+\E_{x}\left[\gamma\left(X_{\tau_{{\bar y}}}\right)-\int_{0}^{\tau_{\bar y}} \left( h\left(X_s\right)+\rho \right)\ ds \right]
   \\\stackrel{\ref{maxdst}}{=}&\E_{x}\left[\mathds{1}_{\{\tau \leq \tau_{\underline x}\}}\int_{0}^{\tau_{\bar y}}f_\rho\left(\sup_{ r\leq t}X_r\right)dt \right]
  \\&+\E_{x}\left[\mathds{1}_{\{\tau > \tau_{\underline x}\}}\int_{\tau_{\bar x}}^{\tau_{\bar y}}f_\rho\left(\sup_{ r\leq t}X_r\right)dt \right]+\gamma\left(x\right)-\E_x\left[\int_{{ 0}}^{\tau_{\bar y}}f_\rho\left(\sup_{r\leq t}X_{r}\right) \ dt\right]
  \\=&\gamma\left(x\right)+\E_{x}\left[\mathds{1}_{\{\tau > \tau_{\underline x}\}}\int^{\tau_{\bar x}}_0 - f_\rho\left(\sup_{ r\leq t}X_r\right)dt \right]
  \\\leq& \gamma\left(x\right)+\E_{x}\left[\mathds{1}_{\{\tau > \tau_{\underline x}\}}\int^{\tau_{\bar x}}_0 f_\rho^-\left(\sup_{ r\leq t }X_r\right)dt \right]
  \\ =&: \star
\end{align*}
Now we distinguish the cases $x \leq \underline x$ and $x >\underline x$. 

If $x > \underline x$ applying Assumption \ref{maxdst} yet another time yields
\begin{align*}
\star&\leq \gamma\left(x\right)+\E_{x}\left[\int^{\tau_{\bar x}}_0 f_\rho^-\left(\sup_{ r\leq t }X_r\right)dt \right]
\\&=\E_{x}\left[\gamma\left(X_{\tau_{ \bar x}}\right)-\int_0^{\tau_{\bar x}} \left( h\left(X_s\right)+\rho \right)  \ ds\right].
\end{align*}

If $x\leq \underline x$ due to the monotonicity of $\gamma$ we have using Lemma \ref{bestex} and the definition of $x^*\in [\underline x, \bar x]$ therein 
\begin{align*}
\star&\leq \gamma\left(x^*\right)+\E_{x^*}\left[\int^{\tau_{\bar x}}_0  f_\rho^-\left(\sup_{ r\leq t }X_r\right)dt \right]
\\&=\E_{x^*}\left[\gamma\left(X_{\tau_{ \bar x}}\right)-\int_0^{\bar x} h\left(X_s\right)+\rho \ ds\right]\\&= \tilde g(x^*)
\\&\leq g^{\mathcal T_{x^*}}_\rho (x^*).
\end{align*}
\end{proof}

\section{The Optimal Restarting Point}\label{sec:restarting}
So far we characterized the (random) optimal times to exercise controls by optimal stopping times of an associated stopping problem. Assuming a supremum representation of the payoff function $\gamma$ that involves a sufficiently favourably shaped function $f$, the characterization boils down to exercise a control, whenever the process exceeds the  rightmost root of the function $f_{\rho^*}$ defined in and after Assumption \ref{maxdst}. The optimal restarting point so far only is characterized as the optimizer of $$\sup_{y\in \R}\mathfrak g_{\rho^*}^{\mathcal T_y}\left(y\right).$$ Now we show that if the ladder height process $H$ is a special subordinator for fixed $\rho\in \R$  the lower root of $f_\rho$, $\underline x$ is indeed the maximizer of $$\mathfrak G\left(\rho\right)=\sup_{y\in \R}\mathfrak g_\rho^{\mathcal T}.$$ If the Assumption \ref{fassumption} also particularly holds for $\rho^*$, it immediately follows that the $\left(s,S\right)$ strategy with $s=\underline x$ and $S=\overline x$ is optimal for the control problem. 
Having worked extensively with the maximum of $X$ so far, now the use of the ladder height process turns out to be more handy. Hence with the first lemma, we establish a connection between expected integral over the first and over the latter. \\
Again we fix $\rho \in \R$ throughout the section and assume $f_\rho$ has precisely two roots $\underline x < \overline x$.
\begin{lemma}\label{supdarstellung}  For all $x < y$ and all measurable functions $\varphi$ such that the following expressions exist holds
	$$\E_x\left[\int_{0}^{\hat \tau_{y}}\varphi\left(H_s\right) \ ds\right]{=}\E_x\left[\int_{0}^{\tau_{ y}}\varphi\left(\sup_{r\leq t} X_r\right)\ ds \right].$$
\end{lemma}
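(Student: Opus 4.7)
The plan is to combine a pathwise time change with a compensation argument for the bivariate subordinator $(H,L^{-1})$ to reduce the left-hand integral to the right-hand one. A first step is to observe that for Lebesgue almost every $t\ge 0$, $\sup_{r\le t}X_r = H_{L_t-}$. On any excursion interval $(a,b)$ of $X$ below its running supremum, $L$ is constant with value $s:=L_a$, and $H_{s-}=X_{L^{-1}(s-)}=X_a=\sup_{r\le a}X_r$, which coincides with $\sup_{r\le t}X_r$ throughout $[a,b)$; off such flat pieces of $L$ the identity is immediate since then $L^{-1}(L_t)=t$.

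Second, perform a pathwise Stieltjes change of variable $s=L_t$ on $[0,\tau_y]$. The continuous nondecreasing map $t\mapsto L_t$ sends $[0,\tau_y]$ onto $[0,\hat\tau_y]$, and pushes Lebesgue measure forward to the Stieltjes measure $dL^{-1}$ on $[0,\hat\tau_y]$ — the jumps of $L^{-1}$ precisely absorb the flat plateaus of $L$, so the measures match up. Combined with the first step this yields the pathwise identity
$$\int_0^{\tau_y}\varphi\!\left(\sup_{r\le t}X_r\right)dt \;=\; \int_0^{\hat\tau_y}\varphi(H_{s-})\,dL^{-1}_s.$$
Third, take $\E_x$ and exploit that $(H,L^{-1})$ is a bivariate subordinator normalised so that $\E(L^{-1}_1)=1$ (Definition \ref{ladderheightdefinition}). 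Consequently $M_s:=L^{-1}_s-s$ is a martingale in the natural filtration of $(H,L^{-1})$, and $\hat\tau_y$ is a stopping time with $\E_x(\hat\tau_y)<\infty$ by Lemma \ref{firstentryfinitenessLevyLEM} applied to $H$. A standard compensation argument for the predictable integrand $\varphi(H_{s-})$ against $dM_s$, combined with a truncation/localisation justified by the integrability hypothesis in the statement, then gives
$$\E_x\!\left[\int_0^{\hat\tau_y}\varphi(H_{s-})\,dL^{-1}_s\right] = \E_x\!\left[\int_0^{\hat\tau_y}\varphi(H_{s-})\,ds\right] = \E_x\!\left[\int_0^{\hat\tau_y}\varphi(H_s)\,ds\right],$$
the last equality because $H$ has at most countably many jumps, so $H_{s-}=H_s$ Lebesgue-a.e.

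The main technical obstacle I anticipate is making the pathwise time change in the second step fully rigorous — in particular verifying that the jumps of $L^{-1}$ (corresponding to the excursions of $X$ from its running maximum) are absorbed by the Stieltjes measure in the correct way, so that the identity holds as an equality and not merely an inequality, and that the left-limit $H_{s-}$ (rather than $H_s$) is the appropriate integrand produced by the change of variables. Once this pathwise identity is secured, the compensation step and the removal of the left limit are routine under the integrability assumed in the statement.
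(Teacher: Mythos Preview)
Your approach is correct in outline but takes a genuinely different route from the paper. The paper argues by \emph{algebraic induction} (a monotone-class argument): for $\varphi=\mathds 1_{[x,y]}$ both integrals collapse to $\E_x(\hat\tau_y)$ and $\E_x(\tau_y)$ respectively, and these coincide by Wald's identity together with the normalisation $\E(L^{-1}_1)=1$; the strong Markov property at $\tau_a$ then extends the equality to indicators $\mathds 1_{[a,b]}$ of arbitrary intervals, linearity gives simple functions, Fatou's lemma gives nonnegative measurable functions, and decomposition into positive and negative parts finishes the job. No pathwise identity is ever asserted.

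Your route --- a pathwise change of variable yielding $\int_0^{\tau_y}\varphi(\overline X_t)\,dt=\int_0^{\hat\tau_y}\varphi(H_{s-})\,dL^{-1}_s$, followed by compensation of $dL^{-1}_s$ against $ds$ via the martingale $L^{-1}_s-s$ --- is more structural and explains \emph{why} the equality holds (it is literally a time change), and it delivers an a.s.\ identity before taking expectations. The trade-off is exactly the obstacle you flag: making the Stieltjes change of variable rigorous at the jumps of $L^{-1}$, checking $L^{-1}(\hat\tau_y)=\tau_y$, and getting the left-limit $H_{s-}$ right all require care. You also tacitly assume $L$ is continuous; the paper explicitly allows the case where $0$ is irregular for $(0,\infty)$ and $L$ is only right-continuous (cf.\ the discussion before Definition~\ref{ladderheightdefinition}), in which case your Step~2 needs a minor reformulation. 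The paper's induction sidesteps all of this at the cost of being less illuminating; your argument buys a stronger (pathwise) statement at the cost of more bookkeeping.
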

\begin{proof}
	
	This can be proven by algebraic induction: Wald's identity shows $$\E_x\left(\hta_y\right)=\E\left(L^{-1}_1\right)\E_x\left(\tau_y\right)=\E_x\left(\tau_y\right),$$ hence the claim holds for indicator functions of the form $\mathds 1_{[x,y]}$ and with the Markov property this extends to indicator functions of general intervals. This carries over to simple positive functions due to linearity and with Fatou's lemma to general positive functions. Decomposition in a positive and a negative part yields the claim for general measurable functions.
\end{proof}
The following definitions and results can be found in \cite{schillingsongvondracek}, but also Section 5.6 in \cite{kypri} provides an overview over Bernstein function, that is rather L\'evy process centred. 
\begin{defn}\label{specialsub}
	Let $S$ be a subordinator with Laplace exponent $$\phi:\left(0,\infty\right)\rightarrow \R; \ \lambda \mapsto a+b\lambda +\int_0^\infty\left(1-e^{-\lambda t}\right)\ \mu\left(dt\right) $$ for $a,b \geq 0$ and $\sigma$-finite measure $\mu$ on $\left(0,\infty\right)$ with $\int_0^\infty \left(t \wedge 1\right) \mu\left(d t\right)\leq \infty$. Then $S$ is called a special subordinator if $\phi$ is a special Bernstein function, i.e. $\tilde \phi:= \frac{\mathrm{id}}{\phi}$ is also the Laplace exponent of a subordinator. 
\end{defn}
\begin{lemma}\label{specialcharacterization}
Let $S$ be a subordinator with potential measure $U$. Then $S$ is special if and only if $U|_{\left(0,\infty\right)}$ has a decreasing density $u$ with $\int_0^1u\left(t\right)dt < \infty$. 
\end{lemma}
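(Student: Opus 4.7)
The plan is to translate both directions to the Laplace side. Recall that the potential measure $U$ of a subordinator with Laplace exponent $\phi$ is characterized by $\int_{[0,\infty)}e^{-\lambda t}\,U(dt)=1/\phi(\lambda)$, so the specialness condition ``$\tilde\phi=\mathrm{id}/\phi$ is Bernstein'' becomes a structural statement about $U$'s Laplace transform. The key computation in both directions is Fubini combined with the identity $(1-e^{-\lambda t})/\lambda=\int_0^t e^{-\lambda s}\,ds$, matched against the Lévy--Khintchine form of a Bernstein function in Definition \ref{specialsub}.

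For the forward direction I would assume $\tilde\phi$ has the representation $\tilde\phi(\lambda)=\tilde a+\tilde b\lambda+\int_0^\infty(1-e^{-\lambda t})\tilde\mu(dt)$, divide by $\lambda$, and apply Fubini to obtain
\begin{align*}
\frac{1}{\phi(\lambda)}=\frac{\tilde\phi(\lambda)}{\lambda}=\tilde b+\int_0^\infty e^{-\lambda s}\bigl(\tilde a+\tilde\mu((s,\infty))\bigr)\,ds.
\end{align*}
By uniqueness of Laplace transforms $U$ then consists of an atom of mass $\tilde b$ at $0$ and an absolutely continuous part on $(0,\infty)$ with density $u(s)=\tilde a+\tilde\mu((s,\infty))$; this $u$ is manifestly decreasing, and $\int_0^1 u(s)\,ds=\tilde a+\int_0^\infty(t\wedge 1)\tilde\mu(dt)<\infty$ by the defining integrability of the Lévy measure $\tilde\mu$.

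For the converse I would run this computation in reverse. Given a decreasing density $u$ of $U|_{(0,\infty)}$ with $\int_0^1 u<\infty$, set $c:=\lim_{s\to\infty}u(s)\ge 0$, $v(s):=u(s)-c$, and let $\tilde b$ denote the mass (possibly zero) of $U$ at $0$. Since $v$ is decreasing and vanishes at infinity, it is the tail $v(s)=\tilde\mu((s,\infty))$ of a unique $\sigma$-finite measure $\tilde\mu$ on $(0,\infty)$, and Fubini gives $\int_0^\infty(t\wedge 1)\tilde\mu(dt)=\int_0^1 v(s)\,ds\le\int_0^1 u(s)\,ds<\infty$, so $\tilde\mu$ is a genuine Lévy measure. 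Reversing the Fubini step yields
\begin{align*}
\tilde\phi(\lambda)=\frac{\lambda}{\phi(\lambda)}=c+\tilde b\lambda+\int_0^\infty(1-e^{-\lambda t})\tilde\mu(dt),
\end{align*}
which is exactly the Lévy--Khintchine form of a Bernstein function, so $\phi$ is special.

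The main obstacle I anticipate is bookkeeping around the boundary behaviour: carefully identifying the atom of $U$ at $0$ with the drift $\tilde b$ of $\tilde\phi$, the limit $\lim_{s\to\infty}u(s)$ with the killing coefficient $\tilde a$, and verifying the finiteness conditions so that $\tilde\mu$ qualifies as a Lévy measure. All Fubini interchanges are legal by nonnegativity, so the substantive content is really just the Laplace-transform dictionary between decreasing densities on $(0,\infty)$ and tails of Lévy measures.
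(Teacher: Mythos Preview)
Your argument is correct. The paper itself does not prove this lemma at all: it simply cites Theorem~5.19 in \cite{kypri} (the result is also in \cite{schillingsongvondracek}). What you have written is essentially the standard Laplace--transform proof that appears in those references, so there is no methodological divergence to compare---you have supplied the proof the paper chose to outsource.

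One small point of bookkeeping you gloss over in the converse direction: for $v(s)=u(s)-c$ to literally equal a tail function $\tilde\mu((s,\infty))$ you need $v$ to be right-continuous, whereas a decreasing density is only determined Lebesgue-a.e. This is harmless---replace $u$ by its right-continuous modification, which coincides with it off a countable set---but since you flag ``boundary behaviour'' as the main obstacle it is worth saying explicitly. Everything else (finiteness of $c$ via $u(1)\le\int_0^1 u<\infty$, the Tonelli interchanges, the identification of $U(\{0\})$ with $\tilde b$) is fine as stated.
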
 
\begin{proof} 
This is Theorem 5.19 in \cite{kypri}. 
\end{proof}
\begin{remark}\label{specialexamples}
Many common examples of subordinators are special, including: 
\begin{enumerate}
	\item All stable subordinators. 
	\item Each subordinator whose jump measure has a log convex density. 
	\item\label{specialexamples:c} Each subordinator whose jump measure has a completely monotone density.
	\item Each subordinator whose L\'evy measure $\nu$ has the property that $\R \rightarrow \R; \ x\mapsto \log \nu(x,\infty)$ is a convex function. 
\end{enumerate}
\end{remark}

\begin{remark}
	Since the most favourable case for us here is that the ladder height process $H$ is a special subordinator, the question arises how one can make sure that $H$ falls in the class of special subordinators by looking at characteristics of $X$. Theorem 7.8 in \cite{kypri} yields that for each $y>0$ \begin{align*}
	\Pi_H(y,\infty)=\int_{[0,\infty)}\Pi(z+y,\infty) U^\downarrow(dz)
	\end{align*} where $\Pi$ is the L\'evy measure of $X$, $\Pi_H$ the one of $H$ and $$U^\downarrow(dz)=\E\left(\int_0^\infty \mathds{1}_{\{H^\downarrow_t\in dz\}}\ dt\right)$$ is the potential measure of the descending ladder height process. \\ Now this formula may help to verify one of the necessary conditions for $H$ to be a special subordinator stated in Remark \ref{specialexamples} by using our knowledge of  $\Pi$. Especially the condition \ref{specialexamples}, \ref{specialexamples:c} turns out to be a handy one, since if $\Pi$ has a completely monotone density, so has $\Pi_H$. And the former applies to many L\'evy processes of interest. 
\end{remark}
Now we proceed to show that provided $H$ is a special subordinator, $\underline x$ is the optimal restarting point.


\begin{thm}\label{xuntenthm} Let $x^*\in \text{argmax}_{y\in \R}\ \mathfrak g_\rho^{\mathcal T_y}(y)$. Then 
$$x^* \geq \underline x.$$ 	If $X$ is not a compound Poisson process and $H$ is a special subordinator, then furthermore $$\underline{x} \in \text{argmax}_{y\in \R}\mathfrak g_\rho^{\mathcal T}(y). $$ 
\end{thm}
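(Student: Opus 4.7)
The plan is to treat the two assertions separately.

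For the first inequality, I would refine the upper bound chain that appears in the proof of Theorem \ref{stoppinghauptsatz}. By Lemma \ref{regularlemma} it suffices to take the supremum in $\mathfrak{g}_\rho^{\mathcal{T}_y}(y)$ over upper regular stopping times. For $y<\underline{x}$ and $\tau\in\mathcal{U}_y$, that chain of inequalities yields
$$
\E_y\!\left[\gamma(X_\tau) - \gamma(y) - \int_0^\tau\left(h(X_s)+\rho\right)ds\right]\leq \E_y\!\left[\int_0^{\tau_{\overline{x}}} f_\rho^-\!\left(\sup_{r\leq t}X_r\right)dt\right].
$$
Subtracting $K$ and passing to the supremum over $\tau$, then invoking Lemma \ref{bestex} to replace the starting point $y$ by some $x^*\in[\underline{x},\overline{x}]$ maximising the resulting expectation, reduces the analysis to a starting point in $[\underline{x},\overline{x}]$. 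Under $\Pro_{x^*}$ the running supremum stays in $[\underline{x},\overline{x}]$ on $[0,\tau_{\overline{x}})$, so $f_\rho^-(\sup)=-f_\rho(\sup)$ pathwise. Combining this identity with the representation of $\mathfrak{g}$ derived from Theorem \ref{stoppinghauptsatz} and Assumption \ref{maxdst} at $x^*$, the right hand side equals $\mathfrak{g}_\rho^{\mathcal{T}_{x^*}}(x^*)$. Hence $\mathfrak{g}_\rho^{\mathcal{T}_y}(y)\leq\mathfrak{g}_\rho^{\mathcal{T}_{x^*}}(x^*)$, forcing the argmax to lie in $[\underline{x},\infty)$.

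For the second assertion, I would first observe that for $y\in[\underline{x},\overline{x}]$ Theorem \ref{stoppinghauptsatz} identifies $\tau_{\overline{x}}$ as optimal in $g_\rho^{\mathcal{T}_y}(y)$, and Assumption \ref{maxdst} together with Lemma \ref{supdarstellung} converts the corresponding expectation into
$$
\mathfrak{g}_\rho^{\mathcal{T}_y}(y) = -K - \E_y\!\left[\int_0^{\hat\tau_{\overline{x}}} f_\rho(H_s)\,ds\right],
$$
reducing the problem to minimising $\Psi(y):=\E_y[\int_0^{\hat\tau_{\overline{x}}}f_\rho(H_s)\,ds]$ over $[\underline{x},\overline{x}]$. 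By Lemma \ref{specialcharacterization} the special subordinator assumption produces a decreasing density $u$ of the potential $U$ of $H$ on $(0,\infty)$; the non-compound-Poisson hypothesis (together with the monotonicity of $H$, so that the killed occupation measure up to $\hat\tau_{\overline{x}}$ coincides with the unkilled potential on $[y,\overline{x})$) gives the representation $\Psi(y)=\int_y^{\overline{x}} f_\rho(z)\,u(z-y)\,dz$. The goal is then to prove $\Psi(\underline{x})\leq\Psi(y)$ for all $y\in(\underline{x},\overline{x}]$, exploiting the monotonicity of $u$ together with the vanishing of $f_\rho$ at both endpoints and its single interior minimum at $a$.

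The hard part will be this last step. A single integration by parts, using the vanishing of $f_\rho$ at $\underline{x}$ and $\overline{x}$ to discard the boundary contributions, recasts $\Psi'(\underline{x})$ as $-\int_{\underline{x}}^{\overline{x}} f_\rho(z)\,u'(z-\underline{x})\,dz$; while the integrand has a transparent sign (the product of two non-positive factors), this local information does not by itself yield the required global comparison between $\Psi(\underline{x})$ and $\Psi(y)$. A more delicate argument—either a rearrangement that simultaneously uses the shape of $f_\rho$ around its interior minimum $a$ and the decreasing behaviour of $u$, or a probabilistic coupling of $H$ started at $\underline{x}$ with $H$ started at $y$ that tracks the difference of the two occupation integrals—is what will close the proof of Part 2.
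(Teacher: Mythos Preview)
For Part~1 your route is sound but more indirect than the paper's. Rather than revisiting the $f_\rho^-$ chain from the proof of Theorem~\ref{stoppinghauptsatz} and invoking Lemma~\ref{bestex}, the paper passes immediately to the ladder-height representation: for $x<\underline x$ it writes, via Assumption~\ref{maxdst} and Lemma~\ref{supdarstellung},
\[
\mathfrak g_\rho^{\mathcal T_x}(x)+K=\E_x\Bigl[\int_0^{\hat\tau_{\overline x}}(-f_\rho)(H_s)\,ds\Bigr],
\]
splits this integral at $\hat\tau_{\underline x}$, discards the strictly negative piece on $[0,\hat\tau_{\underline x})$, and conditions on $H_{\hat\tau_{\underline x}}\in[\underline x,\overline x]$ to bound the remainder by a value attained at some point of $[\underline x,\overline x]$. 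Both arguments reach the same conclusion; the paper's avoids the detour through upper-regular stopping times.

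For Part~2 you have put your finger on the genuine issue. The paper's proof is the short chain
\[
\int_x^{\overline x}(-f_\rho)(y)\,u(y-x)\,dy\;\le\;\int_x^{\overline x}(-f_\rho)(y)\,u(y-\underline x)\,dy\;\le\;\int_{\underline x}^{\overline x}(-f_\rho)(y)\,u(y-\underline x)\,dy
\]
(after correcting the visible $\bar x\leftrightarrow\underline x$ typos), claimed to follow from monotonicity of $u$ together with $-f_\rho\ge 0$ on $[\underline x,\overline x]$. But observe the direction of the first step: for $y\ge x\ge\underline x$ one has $y-x\le y-\underline x$, hence $u(y-x)\ge u(y-\underline x)$ since $u$ is \emph{decreasing}; the pointwise comparison therefore yields $\ge$, not $\le$, and the subsequent enlargement of the integration range only moves further in the wrong direction. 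So the paper's two-line argument does not establish the desired inequality, and your instinct that a genuinely sharper comparison---one that simultaneously uses the unimodal shape of $-f_\rho$ and the structure of $u$, or a probabilistic coupling of $H$ from $\underline x$ and from $x$---is required is well founded. Your proposal is incomplete at precisely the point where the paper's own proof has a gap.
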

\begin{proof}
First, we show $x^* \geq \underline x$.\\
Let $x\in \R$ with $x < \underline x$. Then $$\E_x\left(\int_0^{\hat \tau_{\underline x}} f_\rho\left(H_s\right)\ ds\right)<0$$ and hence we obtain by use of Assumption \ref{maxdst} in combination with Lemma \ref{supdarstellung} \begin{align*}
\mathfrak g_\rho^{\mathcal T_x}\left(x\right)+K&=\E_x\left(\gamma\left(X_{\tau_{\overline x}}\right)-\int_{{ 0}}^{\tau_{\overline x}}\left(h\left(X_s\right) +\rho \right) \ ds\ \right)-\gamma\left(x\right)
\\ & \stackrel{\ref{maxdst}}{=}\E_x\left(\int_0^{ \tau_{\overline x}} (-f_\rho)\left(\sup_{ r\leq s }X_s\right)\ ds\right)
\\ & \stackrel{\ref{supdarstellung}}{=}\E_x\left(\int_0^{\hat \tau_{\overline x}} (-f_\rho)\left(H_s\right)\ ds\right)
\\ & =\E_x\left(\int_0^{\hat \tau_{\underline x}} (-f_\rho)\left(H_s\right)\ ds+\int_{\hat \tau_{\underline x}}^{\hat \tau_{\overline x}} (-f_\rho)\left(H_s\right)\ ds\right)
\\&=\E_x\left(\int_0^{\hat \tau_{\underline x}} (-f_\rho)\left(H_s\right)\ ds\right)+\E_x\left\{\E_{H_{\hat \tau_{\underline x}}}\left(\int_{\hat \tau_{\underline x}}^{\hat \tau_{\overline x}} (-f_\rho)\left(H_s\right)\ ds\right)\right\}\\&<\E_x\left\{\E_{H_{\hat \tau_{\underline x}}}\left(\int_{\hat \tau_{\underline x}}^{\hat \tau_{\overline x}} (-f_\rho)\left(H_s\right)\ ds\right)\right\}\\&\leq\E_a\left\{\int_{\hat \tau_{\underline x}}^{\hat \tau_{\overline x}} (-f_\rho)\left(H_s\right)\ ds\right\}
\end{align*} for some $a \in [\underline x, \bar x]$.\\

Now we show $\underline x\in \text{argmax}_{y\in \R}\ \mathfrak g_\rho^{\mathcal T_y}(y) $ under the assumption that $H$ is a special subordinator and not a compound Poisson process.\\ 
Let $U$ be the potential measure of $H$. 
Since $H$ is a special subordinator, according to Lemma \ref{specialcharacterization} $U|_{\left(0,\infty\right)}$ has a decreasing density $u$, since $X$ is not a compound Poisson process $U$ has furthermore no point mass at 0. \\

Let $x \in [\underline x, \bar x]$. Then \begin{align*}
\mathfrak g_\rho^{\mathcal T_x}\left(x\right)+K&=\E_x\left(\gamma\left(X_{\tau_{\overline x}}\right)-\int_{{ 0}}^{\tau_{\overline x}}\left(h\left(X_s\right) +\rho \right) \ ds\ \right)-\gamma\left(x\right) 
\\ & \stackrel{\ref{maxdst}}{=}\E_x\left(\int_0^{ \tau_{\overline x}} (-f_\rho)\left(\sup_{ r\leq s }X_s\right)\ ds\right)
\\ & \stackrel{\ref{supdarstellung}}{=}\E_x\left(\int_0^{\hat \tau_{\overline x}} (-f_\rho)\left(H_s\right)\ ds\right)
\\&=\int_x^{\bar x}(-f_\rho)\left(y\right) \ U\left(dy-x\right)
\\&=\int_x^{\bar x}(-f_\rho)\left(y\right) u\left(y-x\right) \ dy
\\& \leq \int_x^{\bar x} (-f_\rho)\left(y\right)u\left(y-\bar x\right)\ dy
\\&\leq \int_{\underline x}^{\bar x}(-f_\rho)\left(y\right) u\left(y-\bar x\right)\ dy
\\&= \E_{\underline x}\left[\gamma\left(X_{\tau_{\bar x}}\right)-\int_{0}^{\tau_{\bar x}}\left(h\left(X_s\right)+\rho\right) \ ds \right].
\end{align*}
These calculations yield $$\underline{x} \in  \text{argmax}_{y\in \R}\mathfrak g_\rho^{\mathcal T}(y).$$
\end{proof}

\section{Discussion of the assumptions}\label{sec:discussion_ass}
\subsection{On Assumption \ref{maxdst}}
In Assumption \ref{maxdst} we assume existence of a function $f$ such that for all $x, y \in \R$ with $x~\leq~{ y}$ $$
\gamma\left(x\right)=\E_{x}\left[\int_{0}^{\tau_{ y}}f\left(\sup_{ r\leq t} X_r\right)dt\right]+\E_{x}\left[\gamma\left(X_{\tau_{{ y}}}\right)-\int_{{ 0}}^{\tau_{ y}} h\left(X_s\right) ds\right].$$
If  $-f$ is unimodal, the stopping problem we solve in Section \ref{sec:OSP} has a threshold time as an optimizer that in the next step generates an optimal $\left(s,S\right)$ strategy for our initial impulse control problem.  In the most beneficial cases the boundaries $s$ and $S$ are given by the two solutions to $f\left(x\right)= \rho^*$ for the $\rho^*$ from Section \ref{sec:connection}. 
 The applicability of these results is inseparably intertwined not only with the existence of such a function $f$, but also relies on the explicit obtainability. To tackle these two questions will be the scope of this section.\\
  First, we will give sufficient conditions for such an $f$ in the maximum representation to exist and thereafter take some steps to the (semi-)explicit obtainability in interesting cases.
We remind  that $H^\downarrow$ denotes the descending ladder height process of $X$. \\

\begin{lemma}\label{dachfunktion}
For each positive function $g$ define for all $y \in \R$
\begin{align*}
\hat g\left(y\right) :=\E_y\left(\int\limits_0^\infty g\left( H^{\downarrow}_t\right) \ dt \right).
\end{align*}
Then for all $x\leq y$
\begin{align*}
\E_x \left(\int_0^{\tau_y} g\left(X_t\right) \ dt \right)=\E_x\left( \int_0^{\hat \tau_y}\hat g \left(H_t\right) \ dt\right).
\end{align*}
\end{lemma}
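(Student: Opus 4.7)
The plan is to reduce the identity to a classical Wiener--Hopf-type factorization of the occupation measure of $X$ killed at $\tau_y$, and then read it back as an expected time integral against $H$.

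By spatial homogeneity of $X$ it suffices to prove the identity for $x=0$, $y>0$, and by splitting $g$ into positive and negative parts we may assume $g\geq 0$. The key input is the fluctuation identity (see e.g.\ Chapter~6 of \cite{kypri}), which expresses the killed potential of $X$ as the convolution of the ascending and descending ladder height potentials:
\begin{align*}
\E_0\left[\int_0^{\tau_y} g(X_s)\,ds\right] \;=\; \int_{[0,y)} U_H(du)\int_{[0,\infty)} g(u+w)\,U^\downarrow(dw),
\end{align*}
where $U_H$ is the potential measure of $H$ and $U^\downarrow$ is the potential measure of $H^\downarrow$. By the very definition of $\hat g$ recalled in the statement of the lemma, the inner integral equals $\hat g(u)$.

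To conclude I would apply Lemma~\ref{supdarstellung} to $\varphi=\hat g$, together with the observation that the expected occupation measure of $H$ on $[0,\hat\tau_y)$ under $\Pro_0$ coincides with $U_H$ restricted to $[0,y)$. This yields
\begin{align*}
\int_{[0,y)}\hat g(u)\,U_H(du)\;=\;\E_0\left[\int_0^{\hat\tau_y}\hat g(H_s)\,ds\right],
\end{align*}
which, combined with the previous display, gives the claim.

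The main obstacle is the first step, the ladder factorization of the killed occupation measure. A self-contained proof proceeds via It\^o's excursion theory at the running maximum of $X$: the inverse local time $L^{-1}$ paired with $H$ is a bivariate subordinator, and the master formula of excursion theory decomposes $\int_0^{\tau_y}$ into the time spent at the running maximum (a drift term producing $g(H_t)\,dt$) and the contributions of excursions below the maximum. By time-reversal duality of the excursion measure with the dual process $-X$, the latter contribution equals exactly the potential of $g$ under $U^\downarrow$ shifted by the current maximum, and hence produces $\hat g(H_t)\,dt$ after integration. Under the standing assumption $0<\E(X_1)<\infty$, the subordinator $H^\downarrow$ is genuinely killed (since $-X$ drifts to $-\infty$), so $U^\downarrow$ is finite on bounded sets, and all applications of Fubini above are justified.
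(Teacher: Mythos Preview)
The paper does not actually prove this lemma: it just records that the identity is a reformulation of Exercise~7.10 in \cite{kypri}, going back to \cite{silverstein}. Your sketch is precisely the argument behind that exercise---factor the occupation measure of $X$ killed at $\tau_y$ through the ascending and descending ladder potentials, recognize the inner integral as $\hat g$, then rewrite the outer integral against $U_H$ as $\E_0\int_0^{\hat\tau_y}\hat g(H_s)\,ds$---so you are in line with, and indeed supplying more detail than, what the paper does.

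Two small corrections. In the factorization display the integrand should be $g(u-w)$, not $g(u+w)$: before first passage above $y$ the process sits at its running maximum $u$ minus an excursion depth $w\ge 0$. (The paper's own displayed formula $\hat h(y)=\int h(y+x)\,U^\downarrow(dx)$ in Section~\ref{sec:main} carries the same sign slip, which may have misled you; for the lemma to hold one needs $\hat g(u)=\int g(u-w)\,U^\downarrow(dw)$.) Second, the Wiener--Hopf factorization of the potential in general involves a multiplicative constant depending on the local-time normalization; it is the paper's specific choice $\E(L_1^{-1})=1$ in Definition~\ref{ladderheightdefinition}, together with the analogous convention for the descending ladder time, that makes this constant equal to one---worth saying explicitly. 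The appeal to Lemma~\ref{supdarstellung} in your last step is unnecessary: the equality $\int_{[0,y)}\hat g\,dU_H=\E_0\int_0^{\hat\tau_y}\hat g(H_s)\,ds$ is just the definition of the killed potential of the subordinator $H$.
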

\begin{proof}
This  result is a reformulation of Exercise 7.10 in \cite{kypri} and originates in \cite{silverstein}. 
\end{proof}
\begin{remark} 
The process $H^\downarrow$ acts in law like a killed subordinator, see Theorem 6.9 in \cite{kypri}.
\end{remark}
We remind that $A_H$ is defined as the extended generator of $H$. 
\begin{assumption}
Assume $\gamma$ is in the range of the extended generator $A_H$ and Dynkin's  formula is applicable to each $\hat \tau_y$, i.e., for all $x,y\in \R$ with $x\leq y$ holds 
\[\E_x\gamma\left(H_{\hat \tau_y}\right)=\E_x\int_0^{\hat \tau_y}\left(A_H\gamma\right)\left(H_s\right)ds+\gamma\left(x\right).\]
\end{assumption}
We refer to \cite{OeksendalSulem2005}, Theorem 2.14, for a natural sufficient condition for underlying $C^2$-functions.

\begin{defn}\label{fdef}
	Define $$f:=-\left(A_H\gamma +\hat h\right).$$ \end{defn}

\begin{lemma}\label{777}
	For all $x,y\in \R$ with $x\leq y$ holds  $$
	\E_x\left[\gamma\left(X_{\tau_y}\right)-\int_0^{\tau_y} h\left(X_s\right) \ ds\right]=\E_x\left[\int_0^{\hat \tau_y}(-f)\left(H_s\right)ds\right]+\gamma\left(x\right).
	$$
\end{lemma}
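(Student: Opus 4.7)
The plan is to decompose the left-hand side into its two natural pieces and handle them with the two tools that have just been set up in this subsection. For the endpoint value $\gamma(X_{\tau_y})$, I would invoke Dynkin's formula for the ladder height process $H$ at the stopping time $\hat{\tau}_y$, which is precisely what the assumption immediately preceding Definition \ref{fdef} licenses. The crucial identification here is $H_{\hat{\tau}_y}=X_{\tau_y}$ almost surely, which is immediate from the construction $H_t = X_{L^{-1}(t)}$ together with $\hat{\tau}_y = L^{-1}(\tau_y)$ (with the right-continuous convention on $L^{-1}$ chosen in Section \ref{notationen} when $0$ fails to be regular for the positive half-line). This yields
\[\E_x[\gamma(X_{\tau_y})] = \gamma(x) + \E_x\!\left[\int_0^{\hat{\tau}_y}(A_H\gamma)(H_s)\,ds\right].\]

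For the running-cost term I would apply Lemma \ref{dachfunktion} directly with $g:=h$, which gives
\[\E_x\!\left[\int_0^{\tau_y}h(X_s)\,ds\right] = \E_x\!\left[\int_0^{\hat{\tau}_y}\hat{h}(H_s)\,ds\right].\]
Subtracting this from the Dynkin identity and consolidating the two ladder-height integrands into one then leaves the right-hand side in the form $\gamma(x) + \E_x[\int_0^{\hat{\tau}_y}(A_H\gamma+\hat{h})(H_s)\,ds]$. By Definition \ref{fdef} this integrand is exactly $-f$ evaluated along $H$, which is the claim.

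The proof therefore requires no hard estimate: all of the analytic work is carried by Dynkin's formula and by Lemma \ref{dachfunktion}, and what remains is routine arithmetic. The two points needing genuine care are (i) the almost-sure identification $H_{\hat{\tau}_y}=X_{\tau_y}$, which I would verify by unpacking the definitions of $H$ and $L^{-1}$ and checking that the jump of $L^{-1}$ across an excursion away from the running supremum places $L^{-1}(\tau_y)$ exactly at the first ladder epoch at which the process reaches level $y$; and (ii) the bookkeeping of signs when merging the Dynkin and Lemma \ref{dachfunktion} identities through Definition \ref{fdef}. I expect no genuine obstacle beyond this, provided the standing integrability hypothesis from Assumption \ref{bedingungstetig} and the Dynkin hypothesis from the preceding assumption are indeed in force.
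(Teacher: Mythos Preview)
Your proposal is correct and follows essentially the same route as the paper's own proof: apply Dynkin's formula for $H$ at $\hat\tau_y$ (using the standing assumption) to handle the $\gamma$ term via the identification $X_{\tau_y}=H_{\hat\tau_y}$, apply Lemma~\ref{dachfunktion} to transform the $h$-integral into the $\hat h$-integral along $H$, and then combine the two pieces via Definition~\ref{fdef}. If anything, your sketch is more explicit than the paper's, which simply strings these identities together in one display without commenting on the identification $X_{\tau_y}=H_{\hat\tau_y}$.
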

\begin{proof}
	For all $\in \R$ with $x\leq y$ we have using the assumption and Lemma \ref{dachfunktion}	\begin{align*}
	\E_x&\left[\gamma\left(X_{\tau_y}\right)-\int_0^{\tau_y}\left(h\left(X_s\right)+\rho\right) \ ds\right]\\&=\E_x\left[\gamma\left(H_{\hat \tau_y}\right)-\int_0^{\hat \tau_y}\left(\hat h\left(H_s\right)+\rho\right) \ ds\right]
	\\&=\E_x\left[\int_0^{\hat \tau_y}\left(A_H\gamma+\hat h\right)\left(H_s\right)ds\right]+\gamma\left(x\right)
	\\&=\E_x\left[\int_0^{\hat \tau_y}(-f)\left(H_s\right)ds\right]+\gamma\left(x\right).
	\end{align*}
\end{proof}

As an easy corollary we get
\begin{proposition}\label{maxdarstellungshauptlemma} For all $ x,y \in \R$ with $x  \leq { y}$ holds $$
	\gamma\left(x\right)=\E_{x}\left[\int_{0}^{\tau_{  y}}f\left(\sup_{ r\leq t} X_r\right)dt\right]+\E_{x}\left[\gamma\left(X_{\tau_{{  y}}}\right)-\int_{{ 0}}^{\tau_{  y}} h\left(X_s\right) ds\right].$$
\end{proposition}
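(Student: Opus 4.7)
The plan is to assemble the claim directly from the two previous lemmas without doing any new analysis, since all the work has already been packaged. Concretely, Lemma~\ref{777} already provides a representation of $\gamma(x)$ in terms of an integral of $-f$ driven by the ascending ladder height process $H$ up to $\hat\tau_y$, and Lemma~\ref{supdarstellung} provides the identity that converts such an $H$-integral into an integral of the same function evaluated at the running supremum of $X$ up to $\tau_y$. So the proposition should follow by a one-line rewrite.

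Step one: fix $x,y\in\R$ with $x\le y$ and rearrange the identity from Lemma~\ref{777},
\[
\E_x\!\left[\gamma(X_{\tau_y})-\int_0^{\tau_y} h(X_s)\,ds\right]
=\E_x\!\left[\int_0^{\hat\tau_y}(-f)(H_s)\,ds\right]+\gamma(x),
\]
to isolate $\gamma(x)$ on the left-hand side, obtaining
\[
\gamma(x)=\E_x\!\left[\int_0^{\hat\tau_y} f(H_s)\,ds\right]
+\E_x\!\left[\gamma(X_{\tau_y})-\int_0^{\tau_y} h(X_s)\,ds\right].
\]

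Step two: apply Lemma~\ref{supdarstellung} with $\varphi=f$ to the first term, which gives
\[
\E_x\!\left[\int_0^{\hat\tau_y} f(H_s)\,ds\right]
=\E_x\!\left[\int_0^{\tau_y} f\!\left(\sup_{r\le t}X_r\right)dt\right].
\]
Substituting yields exactly the stated formula.

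The only thing that requires a moment of care is the applicability of Lemma~\ref{supdarstellung}: it assumes the integrals exist. Here positivity of $h$ and the integrability assumptions built into Assumption~\ref{bedingungstetig} together with the finiteness from Lemma~\ref{firstentryfinitenessLevyLEM} ensure that the expressions on the right-hand side of Lemma~\ref{777} are finite, so $\E_x\!\left[\int_0^{\hat\tau_y} f(H_s)\,ds\right]$ is well defined, and Lemma~\ref{supdarstellung} applies to its positive and negative parts separately. There is no real obstacle here; the proposition is essentially a bookkeeping corollary that glues Lemmas~\ref{supdarstellung} and~\ref{777} together.
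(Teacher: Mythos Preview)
Your proof is correct and follows essentially the same route as the paper: rearrange Lemma~\ref{777} to isolate $\gamma(x)$ and then apply Lemma~\ref{supdarstellung} with $\varphi=f$ to convert the $H$-integral into a running-supremum integral. The paper additionally cites Lemma~\ref{dachfunktion} and passes through an intermediate line involving $\hat h(H_s)$, but this is redundant since that identity is already absorbed into the statement of Lemma~\ref{777}; your more direct rearrangement is cleaner.
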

\begin{proof}
Lemma \ref{777} with combined with Lemma \ref{supdarstellung} and Lemma \ref{dachfunktion} yields
\begin{align*}
	\gamma\left(x\right)
	{=}&\E_{x}\left[\int_{0}^{\hat \tau_{  y}}f\left(H_s\right)ds\right]  +  \E_{x}\left[\gamma\left(X_{\tau_{{  y}}}\right)-\int_{{ 0}}^{\hat\tau_{  y}} \hat h\left(H_s\right) ds\right]
	\\ {=}&\E_{x}\left[\int_{0}^{\tau_{  y}}f\left(\sup_{ r\leq t} X_r\right)dt\right]+\E_{x}\left[\gamma\left(X_{\tau_{{  y}}}\right)-\int_{{ 0}}^{\tau_{  y}} h\left(X_s\right) ds\right].
\end{align*}
\end{proof}
\subsection{On Assumption \ref{fassumption}}
%
For our approach to work, it is essential that Assumption \ref{fassumption} holds for $\rho^*$, meaning, $f_{\rho^*}$ really has two distinct roots $\underline x_{\rho^*}, \overline x_{\rho^*}$ (see Step 2. of the solution technique of the following\ section). For this we basically need a unimodal form of the function $f$ in the integral type maximum representation discussed before. While one could generalize our findings by allowing $f$ to violate this assumption on a set where the values of $f$ are so large that they do not influence the line of argument for $\rho^*$, in a broader sense this shape of $f$ roughly reflects the idea of what is called monotone problems in optimal stopping (see the discussion for a related problem in \cite{ChristensenSohr2}). Therefore one cannot expect existence of an optimal threshold strategy when $f$ has an entirely different structure.

Given that $f$ has a unimodal form, the other restrictions in Assumption \ref{fassumption} are either not essential or not restrictive at all. Her we will go through them step by step and so outline possible generalizations of the results in this paper without going into detail. 
\begin{enumerate}
	\item  One reason, where that assumption could fail, is that $f$ is not continuous. Then instead of the roots one could take the points where $f$ jumps from positive to negative instead. While this would make no difference for the lower value $\underline x_{\rho^*}$ at all, the function $x\mapsto \E_x\left(\int_0^{\overline x_{\rho^*}}f_{\rho^*}(-\sup_{s\leq t}X_s)ds\right)$ is not necessarily continuous any more, which would not essentially change the results, but considerably complicate our proofs.	Furthermore, if $\gamma$ and $h$ are smooth enough, $f$ will be continuous. 
	\item It cannot happen that $-\rho^*$ lies below the graph of $f$, since the same arguments as in Theorem \ref{stoppinghauptsatz} would yield that for some $\epsilon >0$ we would have $\mathfrak G(\rho^*-\epsilon)<0$ and that is a contradiction to the definition of $\rho^*$. 
	\item If $-\rho^*=\min_{x\in \R}f(x)$ then we usually only get $\epsilon$-optimal strategies in the class of impulse control problems because the optimal strategies are of singular-type. Just in the case $\Pro_a(\tau_a\neq \mathring \tau_a)>0$ or for $a:= \text{argmin}_{x\in \R} f(x)$, always starting in $a$ and shifting the process back whenever it is strictly larger that $a$, would be an optimal impulse strategy. 
	\item The case $-\rho^* > \max\{\inf_{x<a}f(x),\inf_{x>a}f(x)\}$ again would lead to a contradiction to the definition of $\rho^*$.
	\item If $-\rho^* = \max\{\inf_{x<a}f(x),\inf_{x>a}f(x)\}$ we might not have two roots of $f_{\rho^*}$ but instead of possible missing roots we could use $-\infty$ or $\infty$, resp., and would only get $\epsilon$-optimal strategies. 
\end{enumerate}

\section{Proof of the Validity of the Solution Technique}\label{sec:proof_sol_tech}
The scope of this section is to briefly connect the dots and use our findings in order to show that the step by step solution technique presented in Section \ref{sec:main} indeed is valid. 
 \begin{enumerate}
	\item  Proposition \ref{maxdarstellungshauptlemma} shows that for\begin{align*}
	f=- \left(A_H \gamma + \hat h \right) 
	\end{align*}
	we have
	$$
	\gamma\left(x\right)=\E_{x}\left[\int_{0}^{\tau_{\bar y}}f\left(\sup_{ r\leq t} X_r\right)dt\right]+\E_{x}\left[\gamma\left(X_{\tau_{{\bar y}}}\right)-\int_{{ 0}}^{\tau_{\bar y}} h\left(X_s\right) ds\right] $$ for all $x, \bar y \in \R$ with $x \leq \bar y$, hence the desired maximum representation of $\gamma$. 
	\item The second step of the solution technique is to find $\rho^* \in \R$ such that $f\left(x\right)=\rho^* $  has exactly two solutions $\underline x_{\rho^*} < \overline x_{\rho^*}$  and \begin{align*}
	0&=\sup_{x\in [\underline x_{\rho^*}, \overline x_{\rho^*}]}\E_x\left(\gamma\left(X_{\tau_{\overline x_{\rho^*}}}\right)-\int_{{ 0}}^{\tau_{\overline x_{\rho^*}}}\left(h\left(X_s\right)+ \rho^* \right)\ ds\right)
	\\&= \sup_{x\in [\underline x_{\rho^*}, \overline x_{\rho^*}]}\E_x\left(\int_0^{\hat \tau_{\overline x}}f\left(\sup_{ r\leq t }X_r\right)- \rho^*\ dt \right).
	\end{align*}
	Assume we have found such elements. Then Theorem \ref{stoppinghauptsatz} yields that the threshold time $\tau_{\overline x_{\rho^*}}$ is an optimizer for the stopping problem with value function $g_\rho^{\mathcal T_\cdot}$. Further, the first part of Theorem \ref{xuntenthm} ensures that there is an \begin{align*}
	s&\in \arg\max_{x\in [\underline x_{\rho^*}, \overline x_{\rho^*}]}\E_x\left(\gamma\left(X_{\tau_{\overline x_{\rho^*}}}\right)-\int_{{ 0}}^{\tau_{\overline x_{\rho^*}}}\left(h\left(X_t\right)+ \rho^*\right)  \ dt\right). 
	\end{align*} For this $s$ by definition we have \begin{align*}
	\mathfrak G(\rho^*)=\mathfrak g_{\rho^*}^{\mathcal T_s}(s)
	\end{align*} where $\mathfrak{G}$ is defined in Definition \ref{grossgdefinition} and $\mathfrak{g}$ right before. 
	Now Corollary \ref{vgleichrhosterncorrolar} yields \begin{align*}
	v=\rho^*
	\end{align*} and Corollary \ref{optimalitycorro} shows that the strategy $R(\tau_{\overline x_{\rho^*}},s)$ as defined in Section \ref{threshholdtimeandstrategydefinition} is optimal. 
	
	\item The second part in Theorem \ref{xuntenthm} shows that in case that $H$ is a special subordinator and not a compound Poisson process, $\underline x$ is a valid choice for $s$. 
	Further, the sixth section gives conditions in term of properties of $X$ under that $H$ is a special subordinator. 
\end{enumerate}

\section{Applications and Examples}\label{sec:appl}

After we have proven the validity of our solution technique and already have seen some first applications in Section \ref{sec:main}, where we got existence results and good starting points for numerical analysis we now demonstrate on some prominent examples how our solution technique yields (semi-)explicit characterizations of the control boundaries. \\ 
 We focus on two classes of important applications, inventory control and optimal harvesting and use our solution technique to not only show existence of optimal $(s,S)$ strategies, but also characterize boundaries and value.

 \subsection{Inventory control for spectrally one sided L\'evy processes}\label{subsec:inventory}

The first example we treat is inventory control. We want to remark, that since in inventory control one usually seeks to minimize the costs of ordering supplies and maintaining a stock depending on a draining inventory level modelled by a process $X$, we have  to turn the  usual setting of inventory control (see e.g.\cite{yamazaki} or \cite{helmes2017continuous}) 'upside down' to translate it to the maximization problem we treat herein. 
Although the majority of inventory control problems uses discounting cost and payoff functionals, we want to emphasize that in inventory control the long term average reward is of no less interest compared to the discounted one. For instance in \cite{helmes2017continuous} and \cite{helmes2017weak} the authors show optimality for $\left(s,S\right)$ strategies in the long term average problem under roughly similar conditions as ours here, provided the underlying process $X$ is a diffusion, after they obtained comparable results in the case with discounted payoff in \cite{helmes2015discounted}. However, the lack of existence of $0$-resolvent and the often constant value function makes it impossible to directly adapt techniques from the discounted case in the long term average one. Usual results in inventory control prove existence of optimal $(s,S)$ strategies and sometimes even characterize the optimal boundaries as maximizers of some functionals given by parameters of the process. In the long term average setting such results so far are only present for diffusions, see \cite{helmes2017continuous}. Therein the optimal values are given as optimizers of a functional that consists of integrals over speed measure and scale function of the underlying diffusion, see Proposition 3.5 in \cite{helmes2017continuous}. 

Although there are yet very few comparable results for L\'evy processes, over the course of the last decade the theory of scale functions for spectrally one sided L\'evy processes gave rise to new advances in control theory of these processes. In inventory control in \cite{yamazaki} Yamazaki applied these techniques to show optimality of an $\left(s,S\right)$ strategy when the reward functional is a discounted one and characterized the boundaries as optimizers of a certain functional by use of the scale functions  under roughly the following conditions: 
\begin{enumerate}
	\item The process X drifts upwards, is spectrally positive and $\E\left(\exp\left(\beta X_1\right)\right)<\infty$ for some $\beta>0$. 
	\item The payoff function $\gamma$ is linear, 
	\item The running costs function $h$ is unimodal, convex right from its minimum, grows at least polynomially, $h'\left(x\right)>c>0$ for all $x<x_0$ for some $x_0$ and fulfil some more smoothness and integrability conditions (which in their full extent can be seen as Assumption 1 in \cite{yamazaki}).
\end{enumerate}
Under these conditions in \cite{yamazaki} it is shown that an optimal $(s,S)$ strategy exists (Theorem 1 therein), in Proposition 1 furthermore states that the value function can be expressed in terms of integral identities that comprise the running cost function, the scale functions and the L\'evy exponent of $X$, as well as the right inverse of the L\'evy exponent of $X$. The optimal pair of values $(s^*,S^*)$ is in Proposition 3 therein characterized as an optimizer of $\min_s\max_s \mathcal G(s,S)$, where $\mathcal G$ is also a function comprising all the objects that occur in the representation of the value function.

 In this section we use our solution technique to first prove existence of an optimal $(s,S)$ strategy in the long term average case under less restrictive assumption than the ones used in \cite{yamazaki} for the discounted case. Also we characterize the optimal value and the optimal boundaries using only the L\'evy triplet of $X$ and the root of the right inverse of its L\'evy exponent.

Namely we assume \begin{assumption}\label{beispielass}
	\begin{enumerate}
		\item $X$ is spectrally positive and all later occurring integrals exist.  
		\item For the payoff function $\gamma$ we have $\gamma\left(x\right)=Cx.$
		\item $h$ is positive and unimodal with unique minimum $a\in \R$, it only grows polynomially and we have 
		 $\lim_{x\rightarrow \infty} h\left(x\right)=\infty=\lim_{x\rightarrow -\infty} h\left(x\right)$.
	\end{enumerate}
\end{assumption}
Now we follow the steps laid out in Section \ref{sec:main}. \\
To tackle point 1. there, we first have to get a grip on $f$. 
Note that since $X$ is spectrally positive its descending ladder high process $H^{\downarrow}$ is just an exponentially with a positive rate $q>0$ killed drift, where $q=-\phi\left(0\right)$, $\phi$ being the right inverse of the Laplace exponent of $X$, see \cite{kypri}, Section 6.6.2. and Theorem 7.4. 
Hence the function $\hat{h}$ can be obtained via \begin{align*}
\hat{h}\left(x\right)=\int_{{ 0}}^\infty e^{-qt}h\left(t+x\right) \ dt
\end{align*}
Further,
\begin{align*}
A_H \gamma \left(x\right)=C\delta+C\int_0^\infty y \Pi_H\left(dy\right),
\end{align*}
where $\Pi_H$ is the L\'evy measure of the ladder height process $H$ and 
$\delta$ is the drift term of $H$, so $A_H\gamma$ does not depend on the exact characteristic of $X$. The L\'evy measure $\Pi_H$ can be expressed in terms of $q$ and the L\'evy measure $\Pi$ of $X$ via the formula \begin{align*}
\Pi_H\left(x,\infty\right)=e^{qx}\int_{x}^{\infty}e^{-qy}\Pi\left(y,\infty\right) \ dy,
\end{align*}
see \cite{kypri}, Section 6.6.2., and also Theorem 7.4, hence 
\begin{align*}
A_H \gamma \left(x\right)&=C\delta+\int_0^\infty y \Pi_H\left(dy\right)\\
&=C\delta+C\int_{0}^\infty \Pi_H\left(z,\infty\right) \ dz \\
&=C\delta+C\int_{0}^\infty e^{qz}\int_{z}^{\infty}e^{-qy}\Pi\left(y,\infty\right) \ dy \ dz \\
&=C\delta+C\int_{0}^\infty \int_{0}^{\infty}e^{-qy}\Pi\left(y+z,\infty\right)\ dz \ dy  \\
\end{align*}
This yields for all $x \in \R$ 
\begin{align*}
f\left(x\right)&=-A_H \gamma(x)-\hat h\left(x\right) \\&=-C\delta-\int_{0}^\infty \int_{0}^{\infty}e^{-qy}\Pi\left(y+z,\infty\right)\ dz \ dy-C\int_{{ 0}}^\infty e^{-qt}h\left(t+x\right) \ dt.
\end{align*}
Now since $\hat h$ is a Laplace transform and $A_H\gamma$ is constant, Assumption \ref{beispielass}, 3. yields that there is $\rho^*$ such that
\begin{align*}0&=\E_{\underline x_{\rho^*}}\left(\gamma\left(X_{\tau_{\overline x_{\rho^*}}}\right) -\int_0^{\tau_{\overline x_{\rho^*}}} \left(h\left(X_s\right)+{\rho^*} \right)\  ds\right)\\&=\E_{\underline x_{\rho^*}}\left(\int_{{ 0}}^{\hat \tau_{\overline x}} f_{\rho^*}\left(H_s\right)\  ds\right)
\\&=\E_{\underline x_{\rho^*}}\left(\int_{{ 0}}^{\hat \tau_{\overline x}} -\hat h\left(H_s\right)\  ds\right)+\left(A_H \gamma +\rho^*\right)\E_{\underline x_{\rho^*}}\left(\tau_{\overline x_{\rho^*}}\right).
\end{align*}

\subsection{Optimal harvesting}
Another field of application for our solution technique is optimal harvesting and forest management. This problem originates in the work of Martin Faustmann starting with his seminal paper \cite{faustmann1849berechnung} from 1849.
Until now, advancements and derivatives of this approach are used and usually called Faustmann's formula, see \cite{brazee} for an overview. In this branch of impulse control the underlying process models the growth of a forest, or more general: a natural resource, and impulse control theory is used to determine the optimal strategy to repeatedly harvest the wood. The question how to optimally exploit a natural resource whose dynamics involve randomness goes back several decades, see \cite{mayetalharvesting} for one of the earlier works. Nowadays there is a vast amount of literature present ranging from very applied to rather theoretical treatises (see, e.g., \cite{ bhattacharyya1988stumpage,willassen1998stochastic,A04,alvarez2006does,shackleton2010harvesting}).
While both modeling and solution approaches differ varying by the specific field of application, most of these works have in common that they describe the dynamics of the natural resource by a logistic diffusion.
\cite{alvarezshepp} provides a solution to the impulse control problem with an underlying logistic diffusion in the discounted case. Although in this fields the discounted pay-off functional is the most common choice, recently more and more works point out that on one hand it is difficult to determine the right discounting factor in practice and on the other hand the discounted model has the flaw to favor the present compared to the future and therefore might not be the right choice when one aims for sustainable solutions. The recent article \cite{alvarezherningltasingularharvesting} provides an example for the application of the long term average criterion to find a 'sustainable' harvesting strategy and a discussion of the model, see also \cite{christensen2019nonparametric}.
Here, we take a look at a typical Faustmann-type forest management problem as presented for instance in \cite{AL} or \cite{alvarez2006does}, but
we deviate from modeling the forest growth by a logistic diffusion. Instead we introduce downward jumps to the process since sudden events like storms, bushfire or diseases of the trees could abruptly destroy large quantities of the forest stand or make it worthless.
\subsubsection{Fixed restarting point}
In contrast to the other examples above where arbitrary downward shifts are admissible here first we stay in the classical Faustmann setting and use $B=\{0\}$, hence assume only one fixed restarting point. This is interpreted as a base level for the forest stand after harvesting. With the obvious alterations our solution technique still works in this case.  We assume $X$ to be spectrally negative, because trees don't just appear but grow continuously, set $h=0$ and consider a logistic-type gain function, for example $$
\gamma:\R\rightarrow \R; \ x \mapsto \frac{L}{1+e^{-sx}}
$$ for parameters $L, s>0$. The choice of $\gamma$ is motivated as follows: In the aforementioned works on optimal harvesting a diffusion is used to model the tree stand. Since, contrary to a diffusion model, we can't model different growth rates dependent on the current population with our L\'evy process directly, we interpret $\gamma\left(X_t\right)$ as the volume of wood in our forest present at time $t$.
Since there is no choice in the restarting point, no running costs and no upward jumps the procedure to find the optimal strategy breaks down to the following: \begin{enumerate}
	\item\label{itemeins} For arbitrary $\rho \in \R$ find the rightmost root $\bar x_\rho$ of $\E(X_1)\frac{d}{dx}\gamma -\rho $.
	\item Find $\rho^*$ such that $$\gamma\left(\bar x_{\rho^*}\right)-\gamma\left(0\right)-K-\rho^*\E_0\left(\tau_{\bar x_{\rho^*}}\right)=0.$$
\end{enumerate}
Note that in point \ref{itemeins}. $\E(X_1)$ occurs as the drift term of the ladder height process, since our choice of the ascending ladder times in Definition \ref{ladderheightdefinition} yields $\E(X_1)=\E(H_1)$ and due to $X$ being spectrally negative, the ladder height process does not jump either.

\subsubsection{Arbitrary downshifts allowed}
Now we consider the same setting as in the previous example with the only difference that we allow arbitrary downshifts, i.e., it is admissible to harvest just part of the timber.
Then the values $\underline x_{\rho^*}, \bar x_{\rho^*}, \rho^*$ can be found as follows: 
\begin{enumerate}
	\item For arbitrary $\rho \in \R$ find the two roots $\underline x_\rho, \bar x_\rho$ of $\E(X_1)\frac{d}{dx}\gamma -\rho $ (since in the example above $\gamma'$ is symmetric, we have $\bar x_\rho=-\underline x_\rho$).
	\item Find $\rho^*$ such that $$\gamma\left(\bar x_{\rho^*}\right)-\gamma\left(\underline x_{\rho^*}\right)-K-\rho^*\E_0\left(\tau_{\bar x_{\rho^*}}\right)=0.$$
\end{enumerate}

Of course these two examples mostly serve the purpose of easy examples to illustrate our findings nicely on a not too abstract level. Nevertheless, even this easy examples stress out some noteworthy observations: \begin{itemize}
	\item The only thing we have to know about the underlying L\'evy process (apart from the absence of upward jumps) is $\E\left(X_1\right)$. This opens the door to easy estimation and calculation procedures of the optimal boundaries. 
	\item More freedom in the choice of the restarting point of course yields a higher value for the control problem.
\end{itemize}

\appendix


\section{Toolbox}
This section serves as the collection of the tools we need. Since most of the results are well-known, we omit the proofs and just state the results. For a detailed treatise of renewal theory and the proofs of the lemmas originating in that field stated below, we refer to \cite{asmussen2008applied} and \cite{grimmetstirzaker}. 
\begin{lemma}[Wald's equation, discrete version]
	Let $\left(Y_i\right)_{i \in \N}$ be a sequence of independent random variables and $\tau$ a stopping time with respect to the filtration generated by the $Y_i$ such that one of the expressions $$\sum_{i=1}^{\infty}\E\left(|Y_i \mathds{1}_{\{\tau \geq i \}}|\right)$$ and $$\E\left(\sum_{i=1}^{\infty}|Y_i \mathds{1}_{\{\tau \geq i \}}|\right)$$ (and with Fubini-Tonelli both of them) is finite. Then
	\begin{enumerate}[(i)]\item $\E\left(\sum_{i=1}^\tau Y_i\right)=\E\left(\sum_{i=1}^\tau \E\left(Y_i\right)\right)$
		\item If the $Y_i$ are identically distributed, then $\E\left(\sum_{i=1}^\tau Y_i\right)=\E\left(\tau\right)\E\left(Y_1\right)$. 
	\end{enumerate}
\end{lemma}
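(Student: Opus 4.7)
The plan is to rewrite the telescoping-type sum $\sum_{i=1}^\tau Y_i$ as $\sum_{i=1}^\infty Y_i \mathds{1}_{\{\tau\geq i\}}$ and then exploit that $\{\tau\geq i\}=\{\tau\leq i-1\}^{c}$ is measurable with respect to the $\sigma$-algebra generated by $Y_1,\dots,Y_{i-1}$. Since the $Y_i$ are independent, this forces $Y_i$ and $\mathds 1_{\{\tau\geq i\}}$ to be independent for every $i\in\N$, so that $\E(Y_i\mathds 1_{\{\tau\geq i\}})=\E(Y_i)\Pro(\tau\geq i)$.

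I would first reduce to the nonnegative case by a standard positive/negative decomposition so that all interchanges of summation and expectation are justified by Tonelli; the integrability hypothesis together with Fubini--Tonelli then transports the result to the signed case. Concretely, the computation reads
\[
\E\!\left(\sum_{i=1}^{\tau} Y_i\right)
=\E\!\left(\sum_{i=1}^{\infty} Y_i\mathds 1_{\{\tau\geq i\}}\right)
=\sum_{i=1}^{\infty}\E\!\left(Y_i\mathds 1_{\{\tau\geq i\}}\right)
=\sum_{i=1}^{\infty}\E(Y_i)\,\Pro(\tau\geq i),
\]
and reading the same chain backwards with $\E(Y_i)$ in place of $Y_i$ (which is trivially admissible under the integrability assumption by dominated convergence in the signed case) gives the claimed identity (i).

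For part (ii) the $\E(Y_i)$ are all equal to $\E(Y_1)$, so
\[
\sum_{i=1}^{\infty}\E(Y_i)\,\Pro(\tau\geq i)=\E(Y_1)\sum_{i=1}^{\infty}\Pro(\tau\geq i)=\E(Y_1)\,\E(\tau),
\]
where the last equality is the standard layer-cake expression for the mean of a $\N_0$-valued random variable. The only genuine obstacle is careful bookkeeping of the interchange of sum and expectation in the signed case, which is exactly what the assumed equivalence of the two finiteness conditions (and hence Fubini--Tonelli) is designed to handle; everything else is a direct consequence of independence and the stopping-time property.
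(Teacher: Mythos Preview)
Your argument is correct and is the standard proof of the discrete Wald identity: the key observations that $\{\tau\ge i\}\in\sigma(Y_1,\dots,Y_{i-1})$ and hence $Y_i\perp\mathds 1_{\{\tau\ge i\}}$, together with Fubini--Tonelli under the stated integrability, are exactly what is needed, and your justification of the backwards step for (i) via $|\E(Y_i)|\Pro(\tau\ge i)\le \E(|Y_i|)\Pro(\tau\ge i)=\E(|Y_i|\mathds 1_{\{\tau\ge i\}})$ is sound. Note that the paper does not actually give a proof of this lemma: it is placed in the appendix ``Toolbox'' with the remark that the results there are well known and the proofs omitted, so there is nothing to compare your approach to.
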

\begin{remark}The assumptions of the previous lemma include the case that the $Y_i$ are positive. 
\end{remark}

\begin{lemma}[Renewal Reward Theorem]\label{rere}Assume $\left(Z_i,R_i\right)$ is a sequence of i.i.d. random variables, with $X_i>0$ a.s. for all $i \in \N$. Set $T_n:=\sum_{i\leq n}Z_i$ and $N\left(t\right):=\sup\{n\in \N | T_n \leq t \}$. Assume $\E\left(Z_1\right)< \infty$ and $\E\left(|R_1|\right)< \infty$. Then $$\frac{ \sum_{i=1}^{N\left(t\right)}R_i }{t}\stackrel{a.s.}{\rightarrow} \frac{\E\left(R_1\right)}{\E\left(Z_1\right)}  $$ and $$\frac{\E\left(\sum_{i=1}^{N\left(t\right)}R_i \right)}{t}\rightarrow \frac{\E\left(R_1\right)}{\E\left(Z_1\right)}. \ $$ 	\end{lemma}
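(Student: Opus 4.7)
The plan is to prove the almost sure convergence first and then bootstrap to the mean convergence using Wald's identity together with a truncation argument. The almost sure statement rests on two successive applications of the strong law of large numbers. Applied to the positive i.i.d.\ sequence $(Z_i)$, the SLLN yields $T_n/n \to \E(Z_1)$ a.s., so in particular $T_n \to \infty$ and $N(t)\to\infty$ almost surely as $t\to\infty$. The sandwich $T_{N(t)} \leq t < T_{N(t)+1}$, combined with $T_n/n \to \E(Z_1)$, then gives $N(t)/t \to 1/\E(Z_1)$ a.s. Applying the SLLN to the i.i.d.\ sequence $(R_i)$ gives $\frac{1}{n}\sum_{i=1}^n R_i \to \E(R_1)$ a.s., and the random-index substitution $n = N(t)$ (legitimate since $N(t)\to\infty$ a.s.) produces $\frac{1}{N(t)}\sum_{i=1}^{N(t)} R_i \to \E(R_1)$ a.s. Multiplying the two limits yields the first claim.

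For the mean statement I would set $\tau_t := N(t)+1$ and view the pairs $Y_i := (Z_i, R_i)$ as independent, so that $\mathcal G_n := \sigma(Y_1,\dots,Y_n)$ makes $\tau_t$ a stopping time via $\{\tau_t \leq n\} = \{T_n > t\} \in \mathcal G_n$. The vector-valued Wald identity stated above then gives $\E\bigl(\sum_{i=1}^{\tau_t} Z_i\bigr) = \E(\tau_t)\E(Z_1)$ and $\E\bigl(\sum_{i=1}^{\tau_t} R_i\bigr) = \E(\tau_t)\E(R_1)$. The first identity, together with $t < T_{\tau_t} \leq t + Z_{\tau_t}$ and the control of the overshoot $\E(Z_{\tau_t})/t \to 0$ via the truncation described next, delivers the elementary renewal theorem $\E(N(t))/t \to 1/\E(Z_1)$. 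Combining this with the second identity reduces the claim to showing $\E(R_{\tau_t})/t \to 0$.

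The main obstacle is bounding $\E(|R_{\tau_t}|)/t$ under the weak assumption $\E(|R_1|) < \infty$: since only the pairs $(Z_i,R_i)$ are jointly i.i.d., $R_{\tau_t}$ is in general correlated with $Z_{\tau_t}$, so naive independence arguments fail. My plan is to truncate: for each $A > 0$, $|R_{\tau_t}| \leq A + \sum_{i=1}^{\tau_t} |R_i|\ind_{\{|R_i| > A\}}$, and a second application of Wald to the positive sequence $|R_i|\ind_{\{|R_i|>A\}}$ yields $\E|R_{\tau_t}| \leq A + \E(\tau_t)\,\E\bigl(|R_1|\ind_{\{|R_1|>A\}}\bigr)$. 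Dividing by $t$, letting $t \to \infty$ and then $A \to \infty$, and invoking dominated convergence to send $\E\bigl(|R_1|\ind_{\{|R_1|>A\}}\bigr)$ to $0$, finishes the proof. The very same truncation, applied to $Z_i$ in place of $R_i$, controls the overshoot needed for the elementary renewal theorem used in the previous step, so that the whole argument is self-contained once the discrete Wald identity from the Toolbox is available.
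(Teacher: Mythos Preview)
The paper does not actually prove this lemma: the Toolbox appendix explicitly states that ``since most of the results are well-known, we omit the proofs'' and refers to \cite{asmussen2008applied} and \cite{grimmetstirzaker} for the renewal-theoretic statements. So there is no in-paper argument to compare against.

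Your proposal is the standard textbook proof (essentially the one you would find in those references) and is correct. The almost-sure part is clean: two applications of the SLLN plus the sandwich $T_{N(t)}\le t<T_{N(t)+1}$ give $N(t)/t\to 1/\E(Z_1)$, and random-index substitution is legitimate because $N(t)\to\infty$ a.s. For the mean part, your stopping-time choice $\tau_t=N(t)+1$ is the right one ($N(t)$ itself is not a stopping time), and the truncation $|R_{\tau_t}|\le A+\sum_{i\le\tau_t}|R_i|\ind_{\{|R_i|>A\}}$ combined with Wald and the elementary renewal theorem closes the argument. One small point you gloss over is that applying the discrete Wald identity from the Toolbox requires $\E(\tau_t)<\infty$ a priori; this is not circular, since it follows directly from $Z_i>0$ a.s.\ (e.g.\ pick $\epsilon,p>0$ with $\Pro(Z_1>\epsilon)\ge p$ and compare $T_n$ to a binomial to get exponential tails for $\Pro(T_n\le t)$), but it is worth stating explicitly.
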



\bibliography{LevyDrivenControlProblems}

\end{document}